\newtheorem{theorem}{Theorem}
\newtheorem{lemma}{Lemma}
\newtheorem{proposition}{Proposition}
\newtheorem{definition}{Definition}
\newtheorem{corollary}{Corollary}
\newtheorem{claim}{Claim}
\newcommand{\f}[2]{\frac{#1}{#2}}
\newcommand{\dpr}[2]{\langle #1,#2 \rangle}
\newcommand{\ubb}{{\mathbf u}}
\newcommand{\uff}{{\mathbf f}}
\newcommand{\ubq}{{\mathbf q}}
\newcommand{\al}{\alpha}
\newcommand{\ga}{\gamma}
\newcommand{\de}{\delta}
\newcommand{\De}{\Delta}
\newcommand{\ve}{\varepsilon}
\newcommand{\la}{\lambda}
\newcommand{\si}{\sigma}
\newcommand{\vp}{\varphi}
\newcommand{\om}{\omega}
\newcommand{\rone}{\mathbf R^1}
\newcommand{\cf}{\mathcal F}
\newcommand{\cz}{\mathbb Z}
\newcommand{\ch}{\mathcal H}
\newcommand{\cl}{\mathcal L}
\newcommand{\p}{\partial}
\newcommand{\beq}{\begin{equation}}
\newcommand{\eeq}{\end{equation}}
\newcommand{\beqna}{\begin{eqnarray*}}
\newcommand{\eeqna}{\end{eqnarray*}}
\newcommand{\beqn}{\begin{equation*}}
\newcommand{\eeqn}{\end{equation*}}
\newcommand{\bp}{\begin{proof}}
\newcommand{\ep}{\end{proof}}
\newcommand{\bprop}{\begin{proposition}}
\newcommand{\eprop}{\end{proposition}}
\newcommand{\bt}{\begin{theorem}}
\newcommand{\et}{\end{theorem}}
\newcommand{\bex}{\begin{Example}}
\newcommand{\eex}{\end{Example}}
\newcommand{\bc}{\begin{corollary}}
\newcommand{\ec}{\end{corollary}}
\newcommand{\bcl}{\begin{claim}}
\newcommand{\ecl}{\end{claim}}
\newcommand{\bl}{\begin{lemma}}
\newcommand{\el}{\end{lemma}}
\begin{document}

\title
[Ground states in spatially discrete non-linear Schr\"odinger lattices]
{Ground states in spatially discrete non-linear Schr\"odinger lattices}

\author{Atanas G. Stefanov}

\author{Ryan M. Ross}

\author{Panayotis G. Kevrekidis}

\address{Atanas Stefanov\\
Department of Mathematics \\
 University of Alabama - Birmingham\\ 
 University Hall 4049\\
 Birmingham, AL 35294}
\email{stefanov@uab.edu}

\address{Ryan Ross\\
Lederle Graduate Research Tower\\
Department of Mathematics and  Statistics\\
University of Massachusetts\\
Amherst, MA 01003}

\address{Panayotis G. Kevrekidis\\
Lederle Graduate Research Tower\\
Department of Mathematics and  Statistics\\
University of Massachusetts\\
Amherst, MA 01003}
%and \\
%Center for Nonlinear Studies and Theoretical Division \\ 
%Los Alamos National Laboratory \\ 
%Los Alamos, NM 87544}

\email{kevrekid@math.umass.edu}

\thanks{Stefanov's research is supported in part by
 NSF-DMS  1908626.  The present paper is based on work 
that was supported by the US National Science Foundation 
under DMS-1809074 and PHY-2110030 (P.G.K. and RMR).}
\date{\today}

\subjclass[2000]{???}

\keywords{ Nonlinear lattices }

\begin{abstract}
  In the seminal work  \cite{Wein}, Weinstein considered the question
  of the ground states for discrete
  Schr\"odinger equations with power law nonlinearities, posed on ${\mathbb Z}^d$.   More specifically, he  constructed the so-called normalized waves, by minimizing the Hamiltonian functional, for fixed power $P$ 
  (i.e. $l^2$ mass). This type of  variational method allows one to
  claim, in a straightforward manner,  set stability for such waves.
 % - that is, if one starts close to a soliton, then one is forever close to a constrained minimizer.  As a consequence, this elegant method produces only stable solitons and introduces some constraints on the power of the nonlinearities - in fact they only exist when: for  in the $l^2$ subcritical regime for all fixed $l^2$ norms or in the $l^2$ supercritical regime, only for large enough $l^2$ norms. 
  
%  - specifically, the minimizers exists for all $l^2$ norms, when the power of the non-linearity is $l^2$ subcritical, and for large enough $l^2$ norms, when 
%  
 In this work,  we revisit  these  questions and build upon
 Weinstein's work in several directions. First, for the normalized
 waves,   we show that they are in fact spectrally stable as solutions
 of the corresponding discrete NLS evolution equation. Next, we
 construct the so-called homogeneous waves, by using a different
 constrained optimization problem. Importantly, this construction
 works for all values of the parameters, e.g. $l^2$ supercritical
 problems. We establish a rigorous criterion for stability, which
 decides the stability on the homogeneous waves, based  on the
 classical  Grillakis-Shatah-Strauss/Vakhitov-Kolokolov quantity
 $\p_\om \|\vp_\om\|_{l^2}^2$.     In addition, we provide some
 symmetry results for the solitons. Finally, we complement our results
 with numerical computations, which showcase the full agreement
 between the conclusion from the GSS/VK criterion   vis-{\' a}-vis with the linearized problem.  In particular, one observes that it is possible for the stability of the wave to change as the spectral parameter $\om$ varies, in contrast with the corresponding continuous NLS model. 
\end{abstract}

\maketitle

\section{Introduction and Motivation}

The discrete nonlinear Schr{\"o}dinger model~\cite{dnlsbook} has been
one of the workhorses within the realm of nonlinear dynamical lattice
models that has enabled the identification of numerous nonlinear
waveforms,
their stability analysis, instability manifestations and complex
dynamics
and thermodynamics. Arguably, central to its popularity can be thought
of as being the prototypical inclusion of the main ingredients for
such
phenomena, namely the interplay between nonlinearity and lattice
dispersion.
Another key of its features is its generic nature and multi-fold
physical
motivation stemming, originally, from the example of optical waveguide
arrays~\cite{dnc,moti},
but also extending nowadays to quite different fields, such as the
atomic
realm of Bose-Einstein condensates in optical lattices~\cite{ober}. 
As some among the numerous notable features that the theoretical analysis
and experimental investigations of the mode have enabled to
explore, we mention the manifestation of discrete
diffraction~\cite{yaron}
and diffraction managed solitons~\cite{yaron1,marka}, the illustration
of
lattice solitary waves~\cite{yaron2,yaron3}, the emergence of
discrete vortices~\cite{neshev,fleischer}, the Talbot
revivals~\cite{christo2},
the examination of $\mathcal{PT}$-symmetric lattices~\cite{ptrecent}.

At the same time, the DNLS has been a rich source of interesting
problems
in the applied mathematics literature and community. The early work
of~\cite{Wein} (that will be central herein) set the stage for some
of the important variational considerations that later extended
to both periodic and decaying solutions in similar
models~\cite{pankov},
but also for both focusing and defocusing
nonlinearities~\cite{herr1,herr2}
and continues to be an inspiration for further variational work on the
subject more recently~\cite{jia,hehe}. On the other hand, a
considerable
effort has been expended to obtain information about the
stability of different waveforms in one-~\cite{pkf1,saks},
two~\cite{pkf2}
and even three~\cite{pkf3} spatial dimensions. Additional
efforts have been made to address the spectral theory
and dispersive estimates for this model~\cite{stef1}, the
asymptotic stability of small solutions~\cite{stef2} , the distinction
between on- and off-site solutions~\cite{jenk1}, as well
as extensions more recently going beyond nearest-neighbor
interactions~\cite{jenk2,penati}.
While, admittedly, we cannot offer an exhaustive list, we believe that the above yields a
representative flavor of the range of contributions and interest
in the subject.

Here, we revisit the widely studied topic of the stability of
fundamental waveforms in the model, providing in the spirit
of~\cite{Wein} a novel constrained optimization perspective for
the so-called normalized waves pertaining to the ground state
of the system. On the one hand, the relevant formulation works
in the entire parametric regime of the system, while on the other
hand,
it offers an alternative yet rigorous criterion for the stability of
the solutions, which is tantamount to the famous Vakhitov-Kolokolov
criterion~\cite{vakhitov}, subsequently made rigorous for continuous
systems
in the work of Grillakis-Shatah-Strauss~\cite{grillakis} (see also the
recent exposition of~\cite{kapprom}). Our presentation of the relevant
results is structured as follows. In section 2, we provide the model
formulation and the results/theorems stemming from the
above reformulation. After the preliminaries of Section 3, we turn to
the construction of the normalized waves in Section 4. The resulting
existence and stability properties are presented in Section 5, while
Section 6 briefly summarizes our findings and presents our conclusions.

\section{The Model and Main Results}

We consider the focusing DNLS equation~\cite{dnlsbook} on $\cz^d,
d\geq 1$  of the form:
\begin{equation}
\label{10} 
i \p_t u_n+\De_{disc} u (n)+|u_n|^{2\si} u_n=0, u:\cz^d\to {\mathbb C}
\end{equation}
where 
$$
\De_{disc} u (n)=\sum_{j\in \cz^d: |j-n|=1} u_j - 2d u_n.
$$
The equation is well-known to conserve the Hamiltonian 
$$
H= \sum_{j\in \cz^d: |j-n|=1} |u_j-u_n|^2  - \f{1}{\si+1} \sum_{n\in\cz^d} |u_j|^{2\si+2} 
$$
Substituting the standard standing
wave ansatz $u_n=e^{i \om t} \vp_n$, we obtain the following difference equation, posed on $\cz^d$ 
\begin{equation}
\label{20} 
-\De_{disc} \vp_n+\om \vp_n - |\vp_n|^{2\si} \vp_n=0.
\end{equation}
Relevant quantities, which will be helpful in the sequel are the following 
\begin{eqnarray*}
P&=&\sum_{n\in\cz^d} |\vp_n|^2\\
%T &=& \sum_{j\in \cz^d: |j-n|=1} |\vp_j-\vp_n|^2\\
V&=& \sum_{n\in\cz^d} |\vp_n|^{2\si+2} 
\end{eqnarray*}
It is also relevant to note in passing here that the squared $l^2$
norm is also a conserved quantity (mass) for the dynamics of
Eq.~(\ref{10}). 
%The former one among these quantities, the squared $l^2$ norm is also
%a conserved dynamical quantity for the model
%It is of great importance, both from theoretical and practical point
%of view, to study well-localized solutions of \eqref{20}.
Our aim in what follows will be, in line with many of the above
discussed works, to explore localized solutions of \eqref{20} and more
specifically
the fundamental discrete solitons with $\vp_n\geq 0$ that numerous
earlier
studies touched upon theoretically~\cite{Wein,jenk1} and
experimentally~\cite{yaron2}.
%In particular, one might ask whether or not non-negative solutions
%$\vp_n\geq 0$ exist, as these are observed both numerically and in
%lab experiments.
%Our main interest will be in ground state solutions of \eqref{20}.
There is a number of (non-equivalent) ways in which one can introduce
such objects, but a quite natural  approach, that we adopt in this
work, is to consider such waves as appropriate (multiples of)
minimizers of appropriate variational problems. Even within this
framework, there is a number of ways one can do this, which affects
the stability of such waves
significantly. We  analyze  herein two  constrained variational
problems, which will provide us each
with a one-parameter family of solutions. 

% Introduce the normalized waves first.
We start by introducing the notion of normalized waves.
That is, for any fixed $l^2$ norm of the wave, we minimize the Hamiltonian $H$, subject to this constraint. This approach has been worked out, in some detail, in the work of Weinstein, \cite{Wein} - here we consider it again, as we need more specific properties regarding the stability of these waves.  More precisely, for a fixed $\la>0$, we solve the following variational problem: 
\begin{equation}
\label{30}
\left\{
\begin{array}{c}
H[{\mathbf u}]= \sum_{n\in\cz^d} \sum_{j\in \cz^d: |j-n|=1} |u_j-u_n|^2  - \f{1}{\si+1} \sum_{j\in\cz^d} |u_j|^{2\si+2} \to \min \\
\\
\|\ubb\|_{l^2}^2=\sum_{n\in \cz^d} |u_n|^2=\la.
\end{array}
\right.
\end{equation}
Let us emphasize right away that we do not expect to be able to solve \eqref{30}   for all values of $d, \si$. In fact, the variational problem \eqref{30} turns out to be ill-posed, i.e., 
$\inf_{\|\ubb\|_{l^2}^2=\la} H[u]=-\infty$, when $\si\geq \f{2}{d}$,
for all values of $\la$ small enough. This is, in fact, illustrated in
the original work of
Weinstein~\cite{Wein}. {More precisely,  he shows that for any $\si\geq \f{2}{d}$, there is $\la^*>0$, so that for all $0<\la<\la^*$, 
$$
\inf_{\|\ubb\|_{l^2}^2=\la} H[u]=0, 
$$
 and as it turns out, no constrained minimizers for \eqref{30} exist in this case}.

A different approach will be to construct the waves as (multiples of)  constrained minimizers of a   variational problem, with fixed potential energy.  That is, letting $\om>0$ be a fixed parameter (compare with \eqref{20}), we solve 
\begin{equation}
\label{40}
\left\{
\begin{array}{c}
 J[\ubb]:=  \sum_{n\in\cz^d}  \sum_{j\in \cz^d: |j-n|=1} |u_j-u_n|^2  +\om   \sum_{n\in\cz^d} |u_j|^2 \to \min \\
 \\ 
\|\ubb\|_{l^{2\si+2}}^{2\si+2}=\sum_{n\in \cz^d} |u_n|^{2\si+2}=1. 
\end{array}
\right.
\end{equation}
It is rather straightforward to  see that the problem \eqref{40}  is equivalent to the 
 problem for the minimization of the following un-constrained, but homogeneous functional 
$$
\tilde{J}[{\mathbf u}]=\inf\limits_{{\mathbf u}\neq 0}  
\f{J[{\mathbf u}]}{\left(\sum_{n\in \cz^d} |u_n|^{2\si+2}\right)^{\f{1}{\si+1}}}\to \min.
$$
It is worth noting that, as we shall establish later, the problem \eqref{40} is more flexible, in the sense that it allows a larger set of parameters, for which it produces non-trivial waves. Indeed, the requirement $\si<\f{2}{d}$ is no longer necessary and the waves   exist for all values of $0<\si<\infty, \om>0$. 

Next, we discuss the stability of these waves, as solutions to the DNLS \eqref{10}. More specifically, for solution $\vp$, consider a perturbation in the form $u_n(t)=e^{i \om t}(\vp_n+ e^{\la t}v_n)$. Plugging this in \eqref{10} and ignoring terms like $O(v^2)$, we obtain the linearized problem 
$$
i \la v_n+\De_{disc} v_n-\om v_n+\vp_n^{2\si} v_n +2\si \vp_n^{2\si} \Re v_n=0.
$$
Taking $v_n=(\Re v_n, \Im v_n)$, we obtain the following autonomous  problem for the perturbation $(\Re v_n, \Im v_n)$, 
\begin{equation}
\label{200} 
\left(\begin{array}{cc}
0 & -1 \\ 1 & 0 
\end{array}\right)\left(\begin{array}{cc}
\cl_+ & 0 \\ 0& \cl_- 
\end{array}\right) \left(\begin{array}{c}
\Re v_n \\ \Im v_n
\end{array}\right)=\la \left(\begin{array}{c}
\Re v_n \\ \Im v_n
\end{array}\right),
\end{equation}
where 
\begin{eqnarray*}
	\cl_+  &=& - \De_{disc}+ \om -(2\si+1) \vp_n^{2\si} \\
		\cl_-  &=& - \De_{disc}+ \om - \vp_n^{2\si}. 
\end{eqnarray*}
Accordingly, we say that the wave $e^{\i \om t}\vp_n$ is spectrally stable, if the eigenvalue problem \eqref{200} does not have a non-trivial solution $(\la, {\mathbf v}): \Re\la>0, \mathbf{v} \in l^2(\cz) \times l^2(\cz)$. The notion of orbital stability concerns the solution to the original non-linear problem, and it  is defined as follows - for every $\epsilon>0$, there exists $\de>0$, so that whenever the initial data ${\mathbf u}(0): \|{\mathbf u}(0)-\vp\|_{l^2}<\de$, then
$$
\sup_{t>0} \inf_{\theta\in\rone} \sum_n |u_n(t)-e^{i \theta} \vp_n|^2<\epsilon. 
$$
Note that the Cauchy problem for the full non-linear evolution,
\eqref{10} is not a concern in the discrete setting. More
specifically,   standard energy estimates show that the $l^2$
norm of the solutions of \eqref{10} is conserved, so that global solutions of
\eqref{10} exist whenever the initial data ${\mathbf u}(0)\in
l^2(\cz)$.

\subsection{Existence and stability for the normalized waves} 
We start with the general existence results for $d$ dimensional lattices and then we state more precise results for the case $d=1$. Our first result is about the normalized waves. 
\begin{theorem}(Construction of the normalized waves for $d$ dimensional lattices)
	\label{theo:5} 
	
	Let $d\geq 1$ and $0<\si<\f{2}{d}$. For each $\la>0$, the constrained variational problem \eqref{30} has a solution $\ubb$.  Any solution $\ubb$ is with non-negative entries $u_j$. Moreover, for every $j_0\in \{1, \ldots, d\}$ and fixed indices $k_1, \ldots, k_{j_0-1}, k_{j_0+1}, \ldots, k_d$, we have that there exist integers 
	$l_0, i_0$ depending on \\ $(k_1, \ldots, k_{j_0-1}, k_{j_0+1}, \ldots, k_d)$, so that 
	\begin{eqnarray}
	\label{ineq1}
	& & 	u_{k_1, \ldots, k_{j_0-1}, l_0, k_{j_0+1}, \ldots, k_d}=\ldots = u_{k_1, \ldots, k_{j_0-1}, l_0+i_0, k_{j_0+1}, \ldots, k_d}>u_{k_1, \ldots, k_{j_0-1}, l_0+i_0+1, k_{j_0+1}, \ldots, k_d}\ldots,  \\
	\label{ineq2}
	& & u_{k_1, \ldots, k_{j_0-1}, l_0, k_{j_0+1}, \ldots, k_d}>u_{k_1, \ldots, k_{j_0-1}, l_0-1, k_{j_0+1}, \ldots, k_d}\geq u_{k_1, \ldots, k_{j_0-1}, l_0-2, k_{j_0+1}, \ldots, k_d}\ldots. 
	\end{eqnarray}
	In the case $d=1$, and in addition to the \eqref{ineq1} and \eqref{ineq2}, we claim that the minimizer is symmetric in the following sense 
	$$
	u_0=u_1=\ldots=u_{i_0}>u_{i_0+1}=u_{-1} \geq u_{i_0+2}=u_{-2} \geq \ldots 
	$$
	Next, the minimizer $\ubb$ satisfies the Euler-Lagrange equation 
	\begin{equation}
	\label{EL:10} 
	-\De_{disc} u_n + c(\la) u_n - u_n^{2\si+1}=0, n\in \cz^d 
	\end{equation}
	for some Lagrange multiplier  $c(\la)>0$. Finally, the linearized discrete Schr\"odinger operator 
	$$
	\cl_+ f=-\De_{disc} f_n+c(\la) f_n - (2\si+1) u_n^{2\si} f_n
	$$
	enjoys the property $\cl_+|_{\{\ubb\}^\perp}\geq 0$. In particular, 
\end{theorem}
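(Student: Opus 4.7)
The plan is to run a concentration-compactness argument on a minimizing sequence, then extract the pointwise structure of the minimizer by discrete rearrangement, and finally read off the Euler-Lagrange equation and the second-order condition.

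First, I would show that the infimum in \eqref{30} is finite and negative for $0<\sigma<2/d$. Using the discrete Gagliardo-Nirenberg inequality
$$\sum_n |u_n|^{2\sigma+2} \leq C\Bigl(\sum_{|j-n|=1}|u_j-u_n|^2\Bigr)^{d\sigma/2} \|\ubb\|_{l^2}^{2\sigma+2-d\sigma},$$
the nonlinear term is controlled by a strictly sub-linear power of the Dirichlet form when $d\sigma/2<1$, so $H$ is coercive along the constraint $\|\ubb\|_{l^2}^2=\la$. A trial function (e.g.\ a narrow bump) shows $\inf H<0$. Pick a minimizing sequence $\ubb^{(k)}$; coercivity gives a uniform bound on the Dirichlet form, hence a uniform $l^2$ and $l^{2\sigma+2}$ bound. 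By translation invariance of both $H$ and the constraint, I translate each $\ubb^{(k)}$ so that the supremum $\|\ubb^{(k)}\|_{l^\infty}$ is attained at the origin. Since $\inf H<0$, a standard ``non-vanishing'' lemma (Lieb-type) forces $\|\ubb^{(k)}\|_{l^\infty}\geq c_0>0$, so along a subsequence the translated sequence converges pointwise and weakly in $l^2$ to some $\ubb\not\equiv 0$. Sharp pointwise/weak convergence on $\cz^d$ combined with the Brezis-Lieb lemma gives strong convergence in $l^{2\sigma+2}$, and standard semicontinuity of the Dirichlet form yields $H[\ubb]\leq \inf H$; the constraint $\|\ubb\|_{l^2}^2=\la$ is recovered because otherwise a rescaling of $\ubb$ would beat the infimum (here one uses $\inf H<0$ in a critical way to rule out $\|\ubb\|_{l^2}^2<\la$).

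Next, non-negativity of the entries follows from replacing $\ubb$ by $|\ubb|$: the constraint is invariant, and the triangle inequality $||u_j|-|u_n||\leq |u_j-u_n|$ gives $H[|\ubb|]\leq H[\ubb]$. For the monotonicity statements \eqref{ineq1}-\eqref{ineq2} along each coordinate direction, I would apply a one-dimensional discrete symmetric decreasing rearrangement (Steiner symmetrization) in the $j_0$-th coordinate while freezing the others. This rearrangement preserves every $l^p$ norm and, by the discrete Polya-Szeg\H{o} principle on $\cz^1$, does not increase $\sum_{|m-n|=1}|u_m-u_n|^2$ along that axis (and leaves the orthogonal kinetic contributions unchanged). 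Hence the rearranged sequence is still a minimizer, and reading off its profile along each axis gives the plateau-then-strict-decrease pattern in \eqref{ineq1}-\eqref{ineq2}. For $d=1$ the symmetric decreasing structure is immediate from the one-dimensional rearrangement applied globally.

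The Euler-Lagrange equation \eqref{EL:10} is then standard: for any $\mathbf{v}\in l^2$ with $\langle \ubb,\mathbf{v}\rangle=0$, perturb on the sphere and differentiate, which produces the equation with some Lagrange multiplier $c(\la)$. To verify $c(\la)>0$, pair \eqref{EL:10} with $\ubb$ and sum to get $T+c(\la)\la=V$, where $T$ and $V$ denote the kinetic and potential functionals. Since $H[\ubb]=T-V/(\sigma+1)<0$ (because $\inf H<0$), we obtain $V>(\sigma+1)T\geq T$, whence $c(\la)\la=V-T>0$.

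Finally, the assertion $\cl_+|_{\{\ubb\}^\perp}\geq 0$ is the second-order necessary condition for the constrained minimum. For any $\mathbf{w}\in l^2(\cz^d;\crr)$ with $\langle \ubb,\mathbf{w}\rangle=0$, consider the curve $\ubb(t)=(\ubb+t\mathbf{w})\sqrt{\la}/\|\ubb+t\mathbf{w}\|_{l^2}$ on the constraint sphere; since $t=0$ is a minimum of $H[\ubb(t)]$, the second derivative is nonnegative, and a direct computation using \eqref{EL:10} gives $\frac{d^2}{dt^2}H[\ubb(t)]\big|_{t=0}=2\langle \cl_+\mathbf{w},\mathbf{w}\rangle$, proving the claim. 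I expect the main obstacle to be the compactness step: ensuring a non-vanishing translate and excluding ``mass leaking to infinity'' in the discrete setting, which is where the condition $\sigma<2/d$ and the strict negativity of the infimum are crucial.
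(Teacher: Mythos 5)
Your outline reaches the same destination as the paper, and the final three steps (non-negativity via passing to $|\ubb|$, the Euler--Lagrange computation with $\la c(\la)=V-T>0$, and the second variation along the constraint sphere giving $\cl_+|_{\{\ubb\}^\perp}\geq 0$) coincide with the paper's argument almost verbatim. The genuine difference is the compactness step: the paper runs the Lions trichotomy (tightness/vanishing/splitting) and excludes splitting via a separately proved strict subadditivity $h(\la)<h(\al)+h(\la-\al)$ (its Lemma \ref{le:subad}), whereas you translate to the maximum, invoke a Lieb-type non-vanishing bound, and appeal to Brezis--Lieb. That route can be made to work, but as written it has a gap: pointwise plus weak-$l^2$ convergence does \emph{not} yield strong $l^{2\si+2}$ convergence or $H[\ubb]\leq\liminf_k H[\ubb^{(k)}]$, since an escaping bump contributes negatively through $-V/(\si+1)$. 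What Brezis--Lieb actually gives is $h(\la)\geq h(\mu)+h(\la-\mu)$ with $\mu=\|\ubb\|_{l^2}^2$, and one still needs strict subadditivity of $h$ to force $\mu=\la$. Your remark that ``a rescaling of $\ubb$ would beat the infimum'' is precisely the germ of that subadditivity proof (it is how the paper shows $h(\la)<\f{\la}{\al}h(\al)$), but it must be carried out, including the observation that minimizing sequences cannot have vanishing $l^{2\si+2}$ norm --- which is where $h(\la)<0$ enters critically.

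Two further points on the rearrangement step. First, your claim that Steiner symmetrization in the $j_0$-th coordinate ``leaves the orthogonal kinetic contributions unchanged'' is incorrect: rearranging each fiber independently does change the differences between adjacent fibers. One needs a Hardy--Littlewood-type inequality, $\sum_n|f^*_n-g^*_n|^2\leq\sum_n|f_n-g_n|^2$ (equivalently $\dpr{f^*}{g^*}\geq\dpr{f}{g}$), to see that the transverse terms do not \emph{increase}. Second, the theorem asserts the bell-shaped structure for \emph{any} minimizer, not merely that some minimizer has it, so producing a symmetrized competitor is not enough. This is why the paper's Proposition \ref{Szego} carries an equality characterization (equality in the one-dimensional rearrangement inequality forces the fiber to already be monotone off a plateau); you need the corresponding strict-inequality statement in your P\'olya--Szeg\H{o} step to conclude that a non-bell-shaped minimizer would be strictly beaten, contradicting minimality.
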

{\bf Remark:} 
\begin{itemize}
	\item As a straightforward  consequence of the property $\cl_+|_{\{\ubb\}^\perp}\geq 0$, we see that the Morse index $n(\cl_+)\leq 1$. On the other hand by Euler-Lagrange, \eqref{EL:10},  we obtain  
	$$
	\dpr{\cl_+ \ubb}{\ubb} = -2\si \sum_{n\in\cz^d} u_n^{2\si+2}<0, 
	$$
	it follows that $n(\cl_+)\geq 1$, so 
	$n(\cl_+)=1$. That is $\cl_+$  has exactly one negative eigenvalue. 
	\item It is possible, and this has in fact been done in \cite{Wein}, to construct the normalized waves in the $l^2$ supercritical regime $\si\geq \f{2}{d}$, but only for large enough $\la$. 
%		\item Taking dot product with $\ubb$ in \eqref{EL:10},
%	$$
%	\la c(\la)=\sum_{n\in\cz^d} |u_n|^{2\si+2} - \sum_{j\in \cz^d: |j-n|=1} |u_j-u_n|^2
%	$$
\end{itemize}
Our next result concerns the  spectral stability of the normalized waves constructed in Theorem \ref{theo:5}. 
\begin{theorem}
	\label{cor:17} 
	Let $d\geq 1, 0<\si<\f{2}{d}$ and $\la>0$. Then all normalized waves $e^{i  t c(\la) } {\mathbf u}_\la$ constructed in Theorem \ref{theo:5} satisfy $\ubb\perp Ker(\cl_+)$. As such the quantity $\cl_+^{-1} \ubb$ is uniquely defined in $Ker(\cl_+)^\perp$. In addition,   $\dpr{\cl_+^{-1} \ubb}{\ubb}\leq 0$. 
	
	Assuming the non-degeneracy condition, $\dpr{\cl_+^{-1} \ubb}{\ubb}\neq 0$, this implies $\dpr{\cl_+^{-1} \ubb}{\ubb}<0$, hence spectral stability for $e^{i  t c(\la) } {\mathbf u}_\la$. 
\end{theorem}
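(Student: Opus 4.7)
The plan is to reduce to the classical Grillakis-Shatah-Strauss/Vakhitov-Kolokolov (GSS/VK) criterion for the linearized system \eqref{200}, whose three hypotheses are $n(\cl_+)=1$, $\cl_-\ge 0$ with one-dimensional kernel spanned by $\ubb$, and $\dpr{\cl_+^{-1}\ubb}{\ubb}<0$. The first is already recorded in the remark after Theorem \ref{theo:5}. The second follows because $\cl_-\ubb=0$ by the Euler-Lagrange equation \eqref{EL:10}, and the discrete maximum principle applied to \eqref{EL:10} upgrades $u_n\ge 0$ to $u_n>0$, so Perron-Frobenius identifies $\ubb$ as the positive ground state of $\cl_-$. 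The work, therefore, is to establish: (a) $\ubb\perp Ker(\cl_+)$, making $g:=\cl_+^{-1}\ubb$ well-defined in $Ker(\cl_+)^\perp$; (b) the unconditional $\dpr{g}{\ubb}\le 0$; and (c) the strict version under non-degeneracy.

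For (a), let $e_0$ be a unit eigenvector of $\cl_+$ for its unique negative eigenvalue $-\mu_0^2$. The property $\cl_+|_{\{\ubb\}^\perp}\ge 0$ forces $\dpr{e_0}{\ubb}\ne 0$, since otherwise $e_0\in\{\ubb\}^\perp$ would give $\dpr{\cl_+ e_0}{e_0}=-\mu_0^2<0$. For any $\psi\in Ker(\cl_+)$ with $\dpr{\psi}{\ubb}\ne 0$, the vector
$$
v=e_0-\f{\dpr{e_0}{\ubb}}{\dpr{\psi}{\ubb}}\psi
$$
is nonzero (as $e_0,\psi$ lie in distinct spectral subspaces), satisfies $\dpr{v}{\ubb}=0$, and yet $\dpr{\cl_+ v}{v}=\dpr{\cl_+ e_0}{e_0}<0$, contradicting $\cl_+|_{\{\ubb\}^\perp}\ge 0$. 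Existence and uniqueness of $g\in Ker(\cl_+)^\perp$ follow from the Fredholm alternative: the potential $(2\si+1)u_n^{2\si}$ tends to $0$, so multiplication by it is compact on $l^2$, and hence the essential spectrum of $\cl_+$ equals that of $-\De_{disc}+c(\la)$, namely $[c(\la),c(\la)+4d]$, which is separated from $0$ by $c(\la)>0$.

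For (b), assume toward contradiction $\dpr{g}{\ubb}>0$ and set
$$
w:=g-\f{\dpr{g}{\ubb}}{\norm{\ubb}{l^2}^2}\ubb, \qquad \dpr{w}{\ubb}=0.
$$
Using $\cl_+ g=\ubb$, a short computation yields
$$
\dpr{\cl_+ w}{w}=-\dpr{g}{\ubb}+\f{\dpr{g}{\ubb}^2}{\norm{\ubb}{l^2}^4}\dpr{\cl_+\ubb}{\ubb}.
$$
Both terms are strictly negative (the second via $\dpr{\cl_+\ubb}{\ubb}=-2\si V<0$ from the remark), contradicting $\cl_+|_{\{\ubb\}^\perp}\ge 0$. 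Thus $\dpr{\cl_+^{-1}\ubb}{\ubb}\le 0$ in general, and the non-degeneracy assumption forces the strict inequality, completing the GSS/VK input and hence spectral stability of $e^{it c(\la)}\ubb_\la$. The expected main obstacle is the careful execution of the two-dimensional test-subspace argument in (b) against the unique negative direction of $\cl_+$; secondarily, one must verify that the discrete linearization \eqref{200} satisfies the standard Hamiltonian structural hypotheses of GSS (in particular, that its generalized zero-eigenspace is exhausted by the natural phase-rotation mode) so that the sign condition is indeed the correct instability trigger.
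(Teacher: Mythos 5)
Your proposal is correct and follows essentially the same route as the paper: establish $n(\cl_+)=1$ and $\cl_-\geq 0$ with $Ker(\cl_-)=span[\ubb]$ via the discrete Perron--Frobenius theorem, deduce $\ubb\perp Ker(\cl_+)$ and $\dpr{\cl_+^{-1}\ubb}{\ubb}\leq 0$ by testing $\cl_+$ on vectors orthogonal to $\ubb$ against $\cl_+|_{\{\ubb\}^\perp}\geq 0$ and $\dpr{\cl_+\ubb}{\ubb}<0$, and then invoke the instability index theory. Your key computation for $\dpr{\cl_+ w}{w}$ is identical to the paper's, and your variant argument for $\ubb\perp Ker(\cl_+)$ (combining the negative eigenvector with $\psi$ rather than projecting $\psi$ onto $\{\ubb\}^\perp$) is an immaterial difference.
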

{\bf Remarks:} 
\begin{enumerate}
	\item The condition $\dpr{\cl_+^{-1} \ubb}{\ubb}\neq 0$ is equivalent to the lack of crossing of a pair of eigenvalues (either purely imaginary or a pair of positive and a negative one) through the origin. Crossing  is  not expected    to happen. 
	\item With the same proof, it is easy to see that the normalized waves in the $l^2$ supercritical regime $\si\geq \f{2}{d}$, $e^{i  t c(\la) } {\mathbf u}_\la$, which exist only for large $\la>0$, are also spectrally stable solutions of \eqref{10}. 
\item It is well-known~\cite{dnlsbook} that the DNLS solutions come
  into onsite and offsite symmetric varieties, i.e., centered on or
  between adjacent sites. Per our numerical computations, It is only the former that
  satisfy the condition of $n(\cl_+)=1$ and hence correspond to the
  constrained minimizers of interest herein.
        \end{enumerate}

\subsection{The existence and stability of the homogeneous waves}
Our next results concern the constrained minimization problem \eqref{40}.  We state the existence result first. 
\begin{theorem}(Existence for minimizers of homogeneous functionals)
\label{theo:10} 

Let $\om>0$. Then, the constrained minimization problem \eqref{40} has a solution ${\mathbf u}_\om$. The solution has non-negative entries only, which in addition obey \eqref{ineq1}, \eqref{ineq2}. 
The minimizer ${\mathbf u}_\om$ satisfies the Euler-Lagrange equation 
\begin{equation}
\label{EL:20} 
-\De_{disc} u_n+\om u_n - j(\om) u_n^{2\si+1} = 0,
\end{equation}
where 
$$
j(\om)=\inf\limits_{\|{\mathbf u}\|_{l^{2\si+2}}=1} J[{\mathbf u}]>0.
$$
Moreover, every minimizing sequence for \eqref{40} has a convergent subsequence, in the  $l^2$ norm,  to a constrained minimizer of \eqref{40}. 

The linearized Schr\"odinger operator 
$$
\cl_+ f_n=-\De_{disc}f_n +\om f_n    - (2\si+1) j(\om) u_n^{2\si} f_n
$$
has the property $n(\cl_+)=1$. 
\end{theorem}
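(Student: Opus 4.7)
The plan is to apply the standard direct method to \eqref{40}: extract a minimizer from a minimizing sequence via compactness, write down the Euler--Lagrange equation, transfer the structural statements from Theorem \ref{theo:5}, and then read off $n(\cl_+)=1$ from the minimizer property. The compactness step is the substantive one; everything else is essentially bookkeeping or reduces to arguments already used.

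For existence and for the statement about minimizing sequences, take $\ubb^{(k)}$ with $G[\ubb^{(k)}]:=\|\ubb^{(k)}\|_{l^{2\si+2}}^{2\si+2}=1$ and $J[\ubb^{(k)}]\to j(\om)$. Since $\bigl||u_j|-|u_n|\bigr|\le|u_j-u_n|$, replacing $u_n^{(k)}$ by $|u_n^{(k)}|$ preserves the constraint and does not increase $J$, so I may assume $u_n^{(k)}\ge0$. The bound $J[\ubb^{(k)}]\le C$ together with $\om>0$ gives $\|\ubb^{(k)}\|_{l^2}\le C$; then
\[
1=\sum_n|u_n^{(k)}|^{2\si+2}\le\|\ubb^{(k)}\|_{l^\infty}^{2\si}\|\ubb^{(k)}\|_{l^2}^2
\]
forces $\|\ubb^{(k)}\|_{l^\infty}\ge c_0>0$. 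After translating so that $u_0^{(k)}\ge c_0$ and extracting a pointwise convergent subsequence, I obtain a nontrivial pointwise limit $\ubb$ with $u_0\ge c_0>0$. The key step is Brezis--Lieb applied to both quantities,
\begin{eqnarray*}
G[\ubb^{(k)}]&=&G[\ubb]+G[\ubb^{(k)}-\ubb]+o(1),\\
J[\ubb^{(k)}]&=&J[\ubb]+J[\ubb^{(k)}-\ubb]+o(1),
\end{eqnarray*}
the latter being the standard weak-convergence splitting for the bounded quadratic form $J$ on $l^2$. Setting $\alpha=G[\ubb]\in(0,1]$ and applying the definition of $j(\om)$ to each summand yields
\[
j(\om)\ge j(\om)\bigl[\alpha^{1/(\si+1)}+(1-\alpha)^{1/(\si+1)}\bigr].
\]
Strict concavity of $t\mapsto t^{1/(\si+1)}$ (since $1/(\si+1)<1$) rules out $\alpha\in(0,1)$, so $\alpha=1$. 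Thus $\ubb$ is a minimizer with $\|\ubb^{(k)}-\ubb\|_{l^{2\si+2}}\to 0$, and the coercivity $J[\,\cdot\,]\ge\om\|\cdot\|_{l^2}^2$ upgrades this to strong $l^2$ convergence.

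The Euler--Lagrange equation \eqref{EL:20} follows from the standard Lagrange-multiplier computation on \eqref{40}; pairing with $\ubb$ and using $G[\ubb]=1$, $J[\ubb]=j(\om)$ identifies the multiplier as $j(\om)$, positive since $J\ge\om\|\cdot\|_{l^2}^2$. The positivity of the entries of $\ubb$ and the monotonicity relations \eqref{ineq1}, \eqref{ineq2} are inherited from the coordinate-wise rearrangement argument used in the proof of Theorem \ref{theo:5}: monotone rearrangement along each coordinate direction can only decrease the gradient term of $J$ while leaving the constraint invariant.

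For the Morse index, \eqref{EL:20} gives
\[
\dpr{\cl_+ \ubb}{\ubb}=\dpr{-\De_{disc}\ubb+\om \ubb}{\ubb}-(2\si+1)j(\om)\sum_n u_n^{2\si+2}=-2\si\, j(\om)<0,
\]
so $n(\cl_+)\ge 1$. For the matching upper bound, $\ubb$ minimizes $J$ on the codimension-one manifold $\{G=1\}$, so the second variation of $J$ is nonnegative on the tangent space $T=\{f:\dpr{\ubb^{2\si+1}}{f}=0\}$. Parametrizing by $\psi(\epsilon)=(\ubb+\epsilon f)/\|\ubb+\epsilon f\|_{l^{2\si+2}}$ and expanding $J[\psi(\epsilon)]$ to order $\epsilon^2$, the first-order term drops out by tangency and the $\epsilon^2$ coefficient, after substituting \eqref{EL:20}, simplifies to a positive multiple of $\dpr{\cl_+ f}{f}$. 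Hence $\cl_+$ is nonnegative on a codimension-one subspace, which by min--max yields $n(\cl_+)\le 1$, and therefore $n(\cl_+)=1$. The main obstacle is ruling out the dichotomy case $\alpha\in(0,1)$ in the compactness argument; once that is settled the rest is bookkeeping.
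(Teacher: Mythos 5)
Your proposal is correct and reaches all the conclusions of the theorem, but the compactness step is implemented differently from the paper. The paper invokes the Lions concentration--compactness trichotomy (tightness / vanishing / splitting) for the $l^{2\si+2}$ masses: vanishing is excluded exactly as you do, via $1\le\|\ubb^k\|_{l^\infty}^{2\si}\|\ubb^k\|_{l^2}^2$, and splitting is excluded using the scaling identity $j_\al(\om)=\al^{1/(1+\si)}j(\om)$ together with $\al^{1/(1+\si)}+(1-\al)^{1/(1+\si)}>1$; tightness then yields $l^{2\si+2}$ convergence, upgraded to $l^2$ convergence by weak convergence plus norm convergence. You instead normalize by translation so the $l^\infty$ mass sits at the origin, extract a pointwise limit, and apply Brezis--Lieb to $G$ and the exact quadratic splitting to $J$, ruling out $\al\in(0,1)$ by the same strict concavity of $t\mapsto t^{1/(\si+1)}$. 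The two arguments hinge on the identical subadditivity inequality, but yours is more self-contained in the discrete setting (pointwise convergence on $\cz^d$ is just coordinatewise convergence by diagonal extraction, and the dichotomy never has to be formulated), at the cost of making the translation normalization explicit --- which, to be fair, is also implicitly needed in the paper's version, since the stated claim that \emph{every} minimizing sequence has an $l^2$-convergent subsequence can only hold modulo translations (translates of a fixed minimizer converge weakly to zero). One small point in your favor: your computation $\dpr{\cl_+\ubb}{\ubb}=-2\si\,j(\om)$ carries the multiplier $j(\om)$ and the correct power $u_n^{2\si+2}$, whereas the paper's displayed formula at that step contains a typo. The remaining items (Euler--Lagrange with multiplier identified by pairing, positivity of $j(\om)$ from $J\ge\om\|\cdot\|_{l^2}^2$, rearrangement for \eqref{ineq1}--\eqref{ineq2}, and the second-variation argument giving $n(\cl_+)\le1$ on the tangent space $\{f:\dpr{\ubb^{2\si+1}}{f}=0\}$) match the paper's proof essentially verbatim.
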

The minimizers constructed in Theorem \ref{theo:10} do not satisfy the required profile equation \eqref{20}, but rather the Euler-Lagrange equation \eqref{EL:20}. It is clear however, that we can construct solutions of \eqref{20} starting from solutions $\ubb$ as in Theorem \ref{theo:10}. Indeed, setting $\vp_n:=\left(j(\om)\right)^{\f{1}{2\si}} u_n$ produces solutions of \eqref{20}. 

Our next result concerns the stability of these waves. Unfortunately, we need to put forward some technical constructions before we state a rigorous result.
\begin{definition}
	\label{defi:29} Let $\om>0$. We say that $\ubb_\om$ is a limit wave, if there exists a sequence $\de_j\to 0$, so that $\ubb_{\om+\de_j}$ are minimizers for \eqref{40}  and $\lim_j \|\ubb_{\om+\de_j}-\ubb_\om\|_{l^2}=0$. 
\end{definition} 
Since $\{\ubb_{\om+\de_j}\}_j$ is a minimizing sequence for \eqref{40},  and since by the results of Theorem \ref{theo:10}, we can select a further $l^2$ convergent subsequence, it is clear that  limit waves are constrained minimizers of \eqref{40}. As such, they do  satisfy the Euler-Lagrange equation \eqref{EL:20}. Starting with ${\mathbf u}$,  we can construct waves of the original DNLS via  $\vp_n:=\left(j(\om)\right)^{\f{1}{2\si}} u_n$. We call these also limit waves. 

Our next result concerns the stability of these waves. 
\begin{theorem}
	\label{theo:15} 
	Let $\om>0$ and assume $\vp_\om$ is a limit wave, in the sense of Definition \ref{defi:29}. Assume also that $\vp_\om$ is non-degenerate, in the sense that 
	$
	Ker(-\De_{disc}+\om - (2\si+1) \vp^{2\si})=\{0\}.
	$
	
	 Then, the solution $e^{i \om t}\vp_\om$ is spectrally stable in the context of DNLS, if 
	 \begin{equation}
	 \label{300}
	  \lim_{\de_j\to 0} \f{\|\vp_{\om+\de_j}\|_{l^2}^2 -\|\vp_{\om}\|_{l^2}^2}{\de_j}\geq 0.
	 \end{equation}
	 and it is unstable if 
	 $$
	  \lim_{\de_j\to 0}  \f{\|\vp_{\om+\de_j}\|_{l^2}^2 -\|\vp_{\om}\|_{l^2}^2}{\de_j}<0.
	 $$	 
\end{theorem}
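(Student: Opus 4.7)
The strategy is the classical Grillakis--Shatah--Strauss / Vakhitov--Kolokolov program, adapted to the discrete setting and to the weak form of $\omega$-dependence provided by Definition \ref{defi:29}.  I will (i) read off the spectral structure of $\mathcal{L}_\pm$, (ii) differentiate the Euler--Lagrange equation along the limit sequence $\delta_j\to 0$ to identify the bridge formula
$$
\lim_{\delta_j\to 0}\frac{\|\vp_{\om+\de_j}\|_{l^2}^2-\|\vp_\om\|_{l^2}^2}{\de_j}
= -2\,\dpr{\cl_+^{-1}\vp_\om}{\vp_\om},
$$
and (iii) convert the sign of this quantity into spectral information about the block operator $JL$ in \eqref{200}.

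First I will record the spectral facts that we need.  From Theorem \ref{theo:10} we already know $n(\cl_+)=1$, and the non-degeneracy hypothesis upgrades this to $\cl_+$ being boundedly invertible on $l^2(\cz^d)$ (since $\cl_+$ is a compact perturbation of $-\De_{disc}+\om$, whose continuous spectrum is $[\om,\om+4d]$, so $0$ can only be an eigenvalue, which is ruled out).  For $\cl_-$: the profile equation \eqref{EL:20} (rescaled via $\vp_n=(j(\om))^{1/(2\si)}u_n$) gives $\cl_-\vp=0$, and because $\vp_n\geq 0$ with the structural monotonicity of \eqref{ineq1}--\eqref{ineq2} the entries are strictly positive on the support of $\vp$; a Perron--Frobenius argument for the discrete Schr\"odinger operator $\cl_-$ then yields $\cl_-\geq 0$ with $\mathrm{Ker}(\cl_-)=\mathrm{span}\{\vp\}$.

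Next I will establish the bridge formula.  Writing the profile equation at the parameter $\om+\de_j$, subtracting, and dividing by $\de_j$, one gets
$$
\cl_+\!\left(\frac{\vp_{\om+\de_j}-\vp_\om}{\de_j}\right)
= -\vp_{\om+\de_j}+R_j,
$$
where $R_j$ is a nonlinear remainder controlled by $\|\vp_{\om+\de_j}-\vp_\om\|_{l^2}^2/\de_j$ (here the pointwise control of $|\vp_n|$ at infinity, together with $\si>0$, makes $R_j\to 0$ in $l^2$).  Since $\cl_+$ is invertible and $\vp_{\om+\de_j}\to\vp_\om$ in $l^2$, the difference quotients converge in $l^2$ to $\psi_\om:=-\cl_+^{-1}\vp_\om$.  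Taking inner product with $\vp_\om+\vp_{\om+\de_j}$ then gives the displayed bridge formula.  This is where I expect the main technical obstacle: proving a priori that $\|\vp_{\om+\de_j}-\vp_\om\|_{l^2}/\de_j$ is bounded (so the remainder $R_j$ is genuinely $o(1)$); this will be handled by decomposing the difference along $\mathrm{Ker}(\cl_+)^\perp=l^2$ and using the non-degeneracy bound $\|\cl_+^{-1}\|_{l^2\to l^2}<\infty$.

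Finally, the bridge formula reduces the statement of the theorem to the classical Vakhitov--Kolokolov dichotomy.  For the stability direction ($\langle \cl_+^{-1}\vp,\vp\rangle\leq 0$), a standard lemma (Weinstein) asserts that $\cl_+|_{\{\vp\}^\perp}\geq 0$, so that the quadratic form $\dpr{L w}{w}$ is non-negative on the codimension-two subspace $\{\mathrm{Re}\,v\perp\vp\}\cap\{\mathrm{Im}\,v\perp\vp\}$ preserved by the flow; an elementary argument (take real part of $\dpr{L w}{w}$ for a hypothetical eigenpair) then rules out any $\la$ with $\mathrm{Re}\,\la>0$, giving spectral stability.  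For the instability direction ($\dpr{\cl_+^{-1}\vp}{\vp}>0$), I will produce a real positive eigenvalue by the Pelinovsky-type characterization: since $\cl_-\geq 0$ with one-dimensional kernel spanned by $\vp$, the real eigenvalues $\la^2$ of $\cl_-^{1/2}\cl_+\cl_-^{1/2}$ on $\{\vp\}^\perp$ count, with sign, the negative directions of $\cl_+$ on $\{\vp\}^\perp$, and the hypothesis $\dpr{\cl_+^{-1}\vp}{\vp}>0$ combined with $n(\cl_+)=1$ forces exactly one such negative eigenvalue, producing a single pair $\pm\la$ with $\la>0$ real.
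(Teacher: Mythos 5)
Your proposal follows essentially the same route as the paper: subtract the profile equations at $\om$ and $\om+\de_j$, divide by $\de_j$, use the non-degeneracy (bounded invertibility of $\cl_+$) to get an a priori bound on the difference quotient and absorb the superlinear remainder, conclude that the quotient converges to $-\cl_+^{-1}\vp_\om$ and hence that the limit in \eqref{300} equals $-2\dpr{\cl_+^{-1}\vp_\om}{\vp_\om}$, and finally invoke the Vakhitov--Kolokolov/index count with $n(\cl_+)=1$, $n(\cl_-)=0$. The only difference is cosmetic: you sketch proofs of the index-theoretic facts (positivity of the energy form on the admissible subspace, the $\cl_-^{1/2}\cl_+\cl_-^{1/2}$ characterization) that the paper simply cites from the Kapitula--Kevrekidis--Sandstede/Pelinovsky literature.
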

{\bf Remarks:} 
\begin{enumerate}
	\item Under the assumptions made in Theorem \ref{theo:15},  the limit  
	$$
	\lim_{\de_j\to 0} \f{\|\vp_{\om+\de_j}\|_{l^2}^2 -\|\vp_{\om}\|_{l^2}^2}{\de_j}
	$$
	always exists.  
	\item The non-degeneracy  property of the wave, 
	$$
	Ker(-\De_{disc}+\om - (2\si+1) {\mathbf \vp}^{2\si})=\{0\},
	$$
	 is a technical, but useful statement, which is expected to hold, at least for a generic subset of 
	 the parameter $\om$. 
\end{enumerate}
We have the following  corollary. 
\begin{corollary}
	Assume that for an interval $(a,b)\subset \rone_+$, we have that $\om\to \vp_\om$ is continuous in the $l^2$ norm and all the waves $\vp_\om$ are non-degenerate. Then, the scalar function $\om\to \|\vp_\om\|_{l^2}$ is differentiable, and the wave 
	$e^{\i \om t} \vp$ is stable \underline{if and only if}  
	$$
	\p_\om  \|\vp_\om\|_{l^2}^2\geq 0.
	$$
	\label{cor1}
\end{corollary}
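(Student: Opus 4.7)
The plan is to reduce Corollary \ref{cor1} to Theorem \ref{theo:15} by upgrading the sequential difference quotient appearing in \eqref{300} to an honest two-sided derivative, after which the stability dichotomy is immediate. First, the $l^2$-continuity hypothesis on $\om \mapsto \vp_\om$ means that for every $\om_0 \in (a,b)$ and every sequence $\de_j \to 0$ we have $\|\vp_{\om_0+\de_j} - \vp_{\om_0}\|_{l^2} \to 0$, so each $\vp_{\om_0}$ is a limit wave in the sense of Definition \ref{defi:29}. Together with the assumed non-degeneracy, this places us inside the hypotheses of Theorem \ref{theo:15} at every point of $(a,b)$, which already gives existence of the sequential limit in \eqref{300} and the stability/instability dichotomy based on its sign.

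The crucial step is to show that $\om \mapsto \vp_\om$ is in fact $C^1$ as an $l^2(\cz^d)$-valued map. To this end I apply the implicit function theorem to the profile map
\[
F(\ubb,\om) := -\De_{disc} \ubb + \om \ubb - \ubb^{2\si+1}
\]
at a fixed $\om_0 \in (a,b)$, regarded as $F:l^2(\cz^d)\times \rone \to l^2(\cz^d)$. The differential $D_{\ubb} F(\vp_{\om_0},\om_0)$ is precisely
\[
\cl_+ = -\De_{disc} + \om_0 - (2\si+1)\vp_{\om_0}^{2\si},
\]
which is a bounded self-adjoint operator on $l^2$. Since $\vp_{\om_0}\in l^2$ necessarily decays at infinity (consistent with the monotonicity inequalities \eqref{ineq1}--\eqref{ineq2}), the multiplication by $(2\si+1)\vp_{\om_0}^{2\si}$ is compact, so $\cl_+$ is a compact perturbation of the boundedly invertible operator $-\De_{disc}+\om_0$ (whose spectrum lies in $[\om_0,\om_0+4d]$), hence Fredholm of index zero. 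The non-degeneracy assumption $Ker(\cl_+)=\{0\}$ then gives that $\cl_+$ is boundedly invertible on $l^2$. The IFT therefore produces a unique $C^1$ branch $\om \mapsto \wt\vp_\om$ of solutions of $F(\wt\vp_\om,\om)=0$ in some $l^2$-neighborhood of $\vp_{\om_0}$, and the assumed $l^2$-continuous branch $\om \mapsto \vp_\om$ must coincide with $\wt\vp_\om$ for $\om$ near $\om_0$ by local uniqueness. Consequently $\om\mapsto \vp_\om$ is $C^1$ into $l^2$, and by the chain rule $\om\mapsto \|\vp_\om\|_{l^2}^2$ is differentiable, with the sequential difference quotient in \eqref{300} equal to $\p_\om \|\vp_\om\|_{l^2}^2$ independently of the sequence $\de_j$. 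Feeding this back into Theorem \ref{theo:15} yields spectral stability precisely when $\p_\om \|\vp_\om\|_{l^2}^2 \geq 0$ and instability when $\p_\om \|\vp_\om\|_{l^2}^2 < 0$, which is the claimed equivalence.

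The main obstacle is therefore the analytic verification that allows IFT to apply, namely the compactness of multiplication by $\vp_{\om_0}^{2\si}$ on $l^2$ and the identification of the IFT branch with the given continuous family. The former is a standard consequence of the fact that $\vp_{\om_0}\in l^2$ forces $\vp_{\om_0,n}\to 0$, so that the multiplication operator is approximated in operator norm by finite-rank truncations; the latter is an immediate consequence of local uniqueness in the IFT together with the $l^2$-continuity of $\om\mapsto \vp_\om$. Everything else in the argument is bookkeeping.
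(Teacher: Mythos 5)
Your argument is correct, but it reaches the conclusion by a different (and somewhat heavier) route than the paper. The paper offers no separate proof of Corollary \ref{cor1}: it is read off directly from the proof of Theorem \ref{theo:15}, where the difference quotient is shown to converge to $-2\dpr{\cl_+^{-1}\vp_\om}{\vp_\om}$ for \emph{every} sequence $\de_j\to 0$ along which $\|\vp_{\om+\de_j}-\vp_\om\|_{l^2}\to 0$; under the blanket $l^2$-continuity hypothesis every sequence qualifies, the limit is sequence-independent, hence the two-sided derivative $\p_\om\|\vp_\om\|_{l^2}^2$ exists and the dichotomy of Theorem \ref{theo:15} becomes the stated ``if and only if.'' You instead re-derive differentiability of the branch via the implicit function theorem applied to $F(\ubb,\om)=-\De_{disc}\ubb+\om\ubb-\ubb^{2\si+1}$, using non-degeneracy plus the Fredholm-of-index-zero structure of $\cl_+$ (a compact perturbation of the invertible $-\De_{disc}+\om$) to invert the linearization. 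This buys you more than is needed ($C^1$-dependence and local uniqueness of the branch), and it is a perfectly legitimate alternative; the price is that you must verify that $F$ is genuinely $C^1$ from $l^2\times\rone$ to $l^2$, which you assert but do not check. It does hold — using $l^2(\cz^d)\hookrightarrow l^\infty$ one finds the Fr\'echet derivative is multiplication by $(2\si+1)\ubb^{2\si}$, and $\ubb\mapsto\ubb^{2\si}$ is (H\"older) continuous from $l^2$ into $l^\infty$ even for $0<\si<\f{1}{2}$ — but this is exactly the kind of regularity issue the paper's hands-on Taylor estimate on $E_n^j$ in the proof of Theorem \ref{theo:15} is designed to sidestep. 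A second small point: you should note that for real perturbations the nonlinearity must be read as $|u|^{2\si}u$ so that $F$ is defined off the cone of non-negative sequences; this is cosmetic. Otherwise the identification of the IFT branch with the given continuous branch, and the passage to the stability dichotomy, are sound.
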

{\bf Remark:} This result is of course identical of the Grillakis-Shatah-Strauss stability condition for stability of waves in continuous models. This is, as far as we know, the first such rigorous result in the setting of discrete NLS. 

\begin{figure}[t]
\includegraphics[scale=0.3]{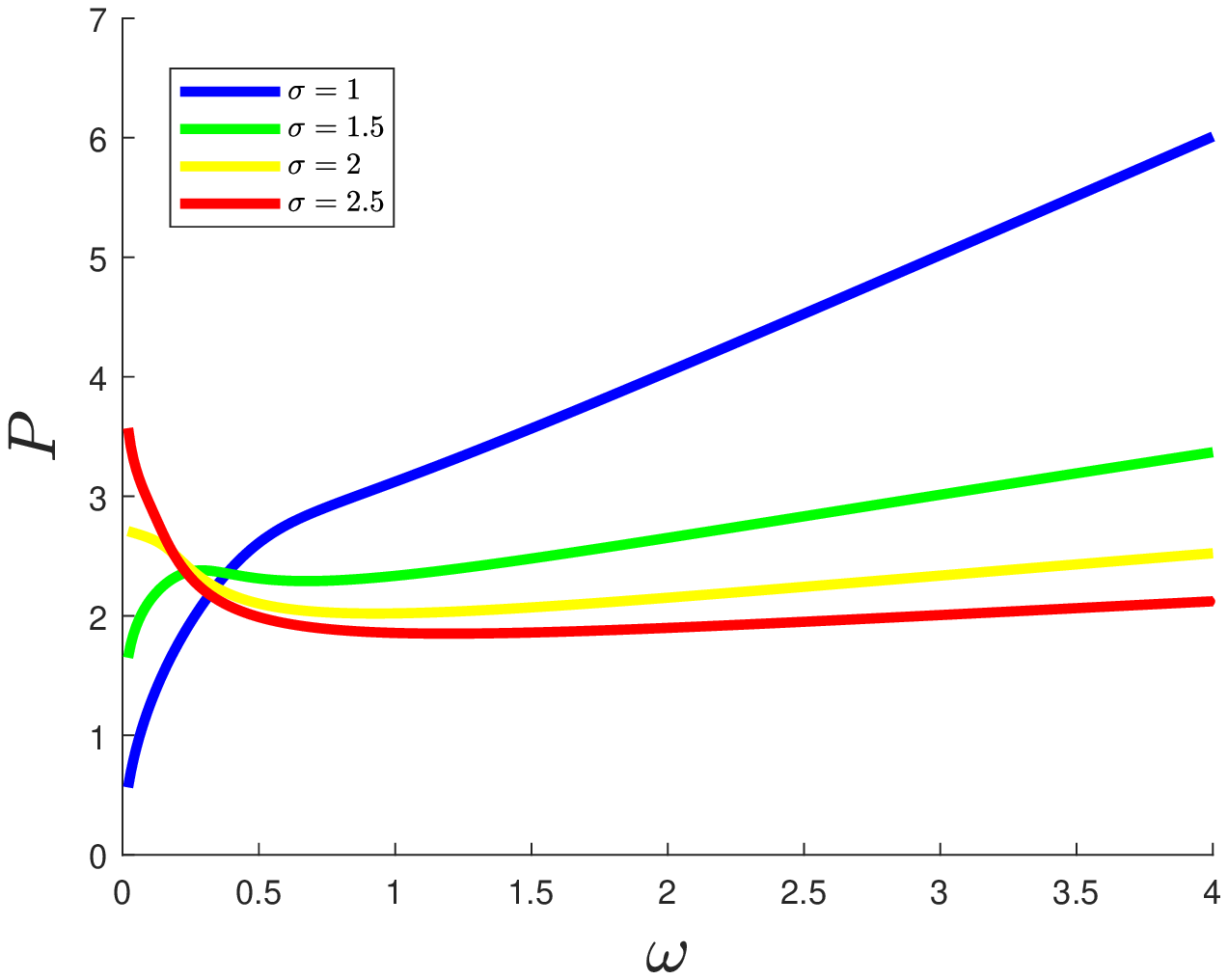}
\includegraphics[scale=0.3]{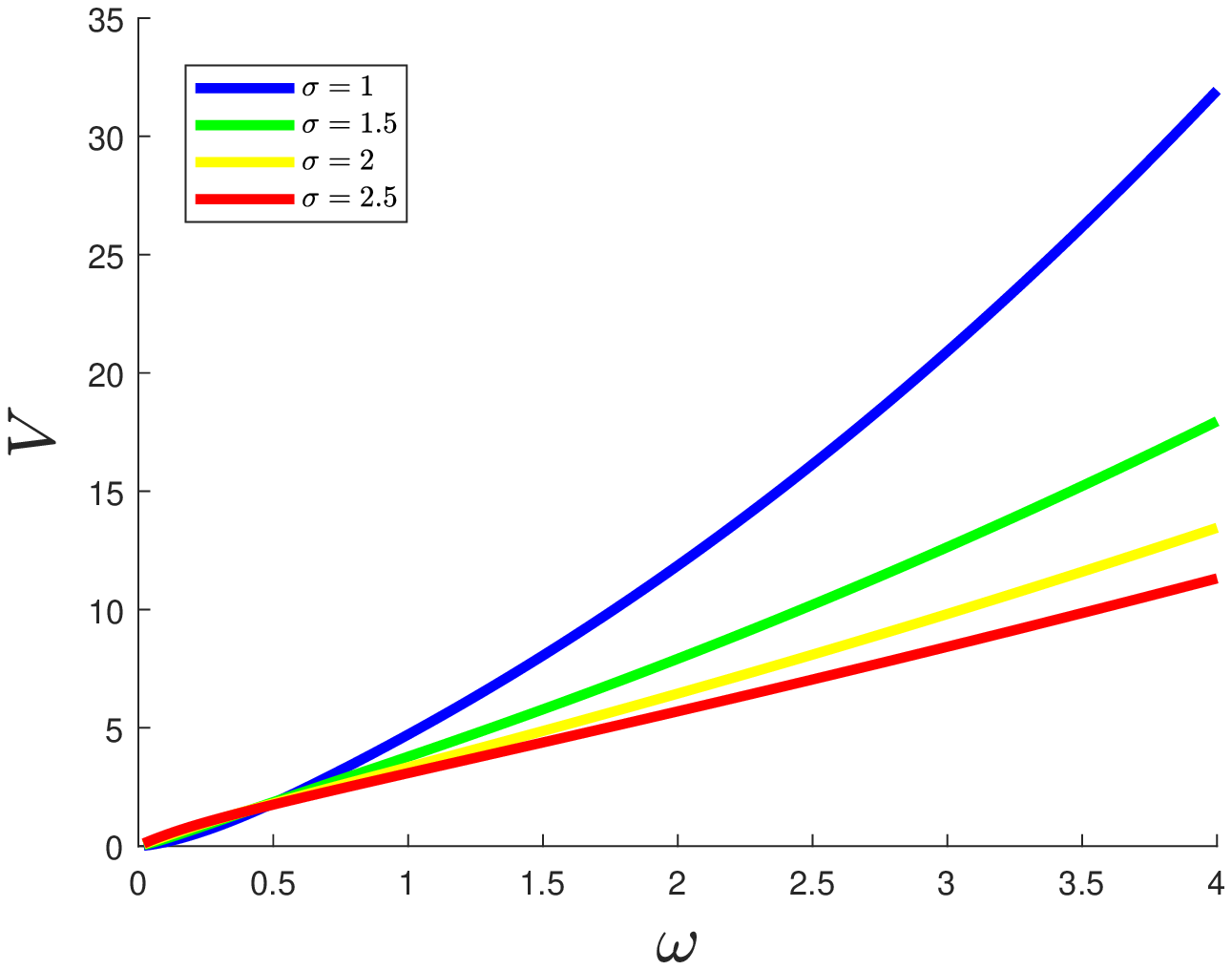}
\\
\includegraphics[scale=0.3]{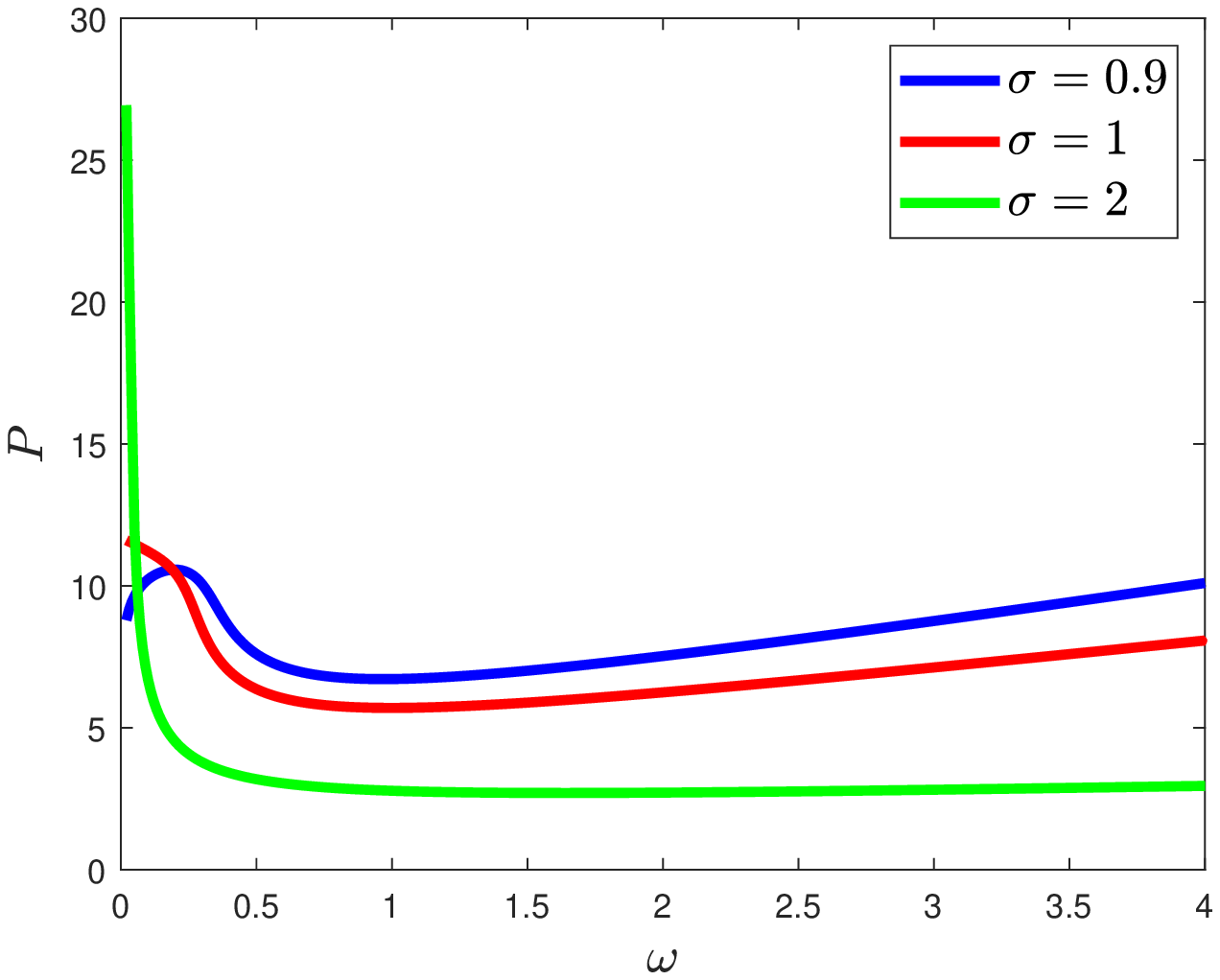}
\includegraphics[scale=0.3]{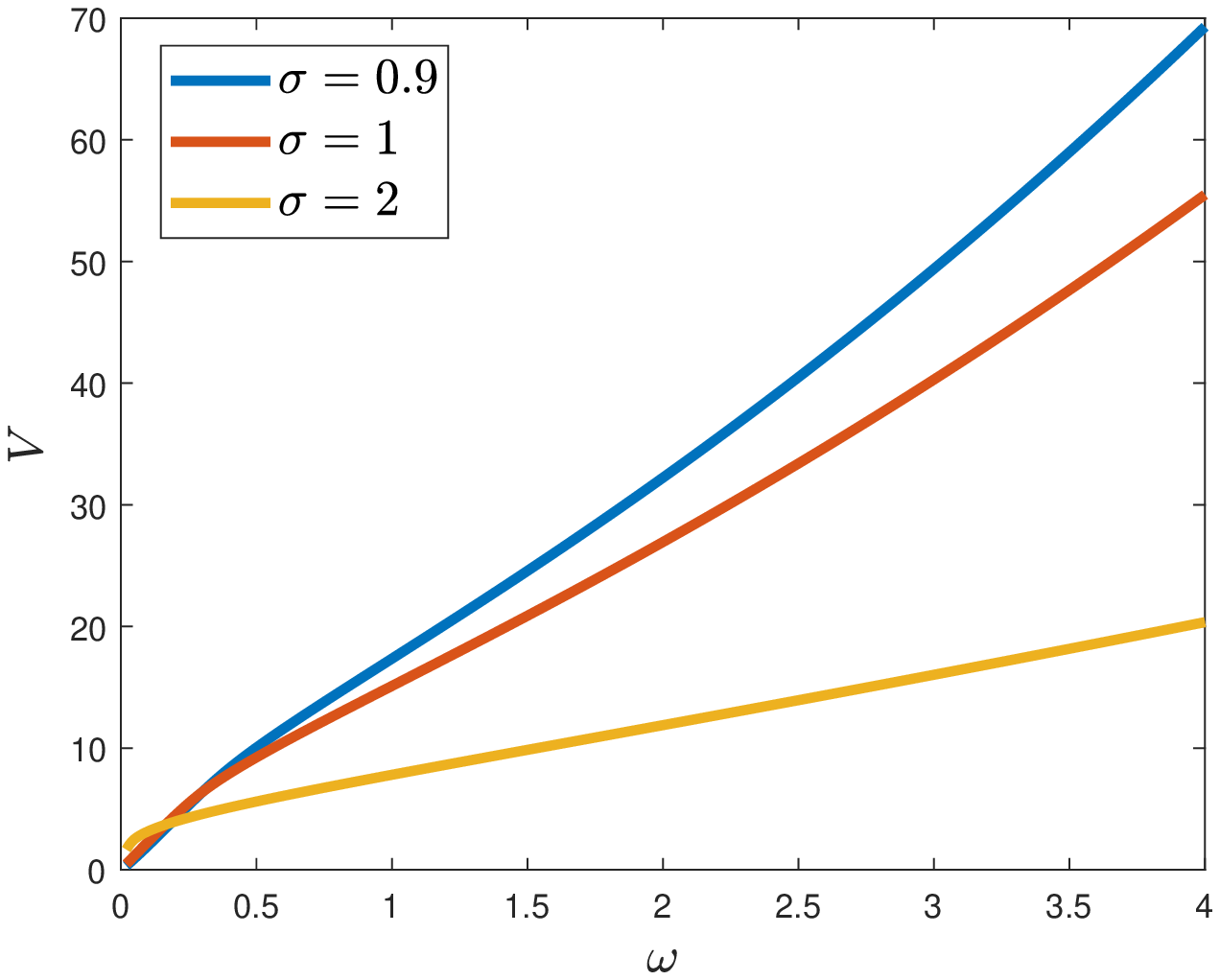}
\caption{The quantities $P$ and $V$ as functions of the frequency $\om$ in dimension $d=1$ (top) and $d=2$ (bottom).}
\label{PTV_vs_om_d=1}
\end{figure}

% j(om) vs. om figure, not too useful at the moment:
%\begin{figure}[h]
%\includegraphics[scale=0.5]{}
%\includegraphics[scale=0.5]{}
%\caption{The function $j(\om)$ vs. $\om$ in dimension $d=1$ (left) and $d=2$ (right).}
%\label{jvsom_1d}
%\end{figure}

\begin{figure}[t]
\includegraphics[scale=0.45]{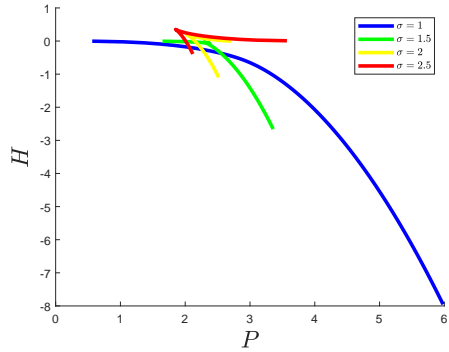}
\caption{The functions $H[\vp_\om]$ vs. $P[\vp_\om]$ in $d=1$ for
  different values of $\sigma$.}
\label{fig-HvsP}
\end{figure}

%\begin{figure}[t]
%\includegraphics[scale=0.5]{}
%\label{fig-omvsP_1d}
%\caption{$\om$ vs. $P[\varphi_\om]$ in one dimension.}
%\end{figure}

Next, we express everything in terms of the function $j(\om)$. 
\begin{proposition}
	\label{prop:908} 
	The function $\om\to j(\om)$ is concave down. As such, it has a second derivative a.e. on $(0, \infty)$. Moreover, at such points of differentiability,  
	\begin{eqnarray}
	\label{335} 
	\|\vp_\om\|_{l^2}^2  = j'(\om) j^{\f{1}{\si}}(\om).
	\end{eqnarray}
	Thus, the stability condition is exactly that $j^{1+\f{1}{\si}}(\om)$ is convex or 
	\begin{equation}
	\label{350}
		s(\om) := j''(\om)+ \f{(j'(\om))^2}{\si j(\om)}>0.
	\end{equation} 
\end{proposition}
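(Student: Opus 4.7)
The plan is to read off all four claims from the variational characterization
$$
j(\om)=\inf_{\|\ubb\|_{l^{2\si+2}}=1} J_\om[\ubb], \qquad J_\om[\ubb]=\sum_{|j-n|=1}|u_j-u_n|^2+\om\sum_n|u_n|^2,
$$
using only the fact that for each fixed admissible $\ubb$, the map $\om\mapsto J_\om[\ubb]$ is \emph{affine} in $\om$.

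First I would establish concavity. Since $j$ is the pointwise infimum over the admissible set of the affine functions $\om\mapsto J_\om[\ubb]$, it is concave on $(0,\infty)$. Standard convex analysis then gives that $j$ is locally Lipschitz, differentiable off a countable set, and (being concave on an open interval of $\rone$) possesses a second derivative Lebesgue-a.e.\ by Alexandrov's theorem (in one variable this is just monotonicity of $j'$). Positivity $j(\om)>0$ follows from $J_\om[\ubb]\geq \om\|\ubb\|_{l^2}^2>0$ and a lower bound by embedding $l^{2\si+2}\hookrightarrow l^2$.

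Next I would derive \eqref{335}. Fix $\om$ at which $j$ is differentiable and let $\ubb_\om$ be a minimizer of \eqref{40}. For any $\om'>0$, $\ubb_\om$ is a competitor at level $\om'$, so
$$
j(\om')\leq J_{\om'}[\ubb_\om]=J_{\om}[\ubb_\om]+(\om'-\om)\|\ubb_\om\|_{l^2}^2=j(\om)+(\om'-\om)\|\ubb_\om\|_{l^2}^2,
$$
and symmetrically with roles reversed using $\ubb_{\om'}$. Dividing by $\om'-\om$ and sending $\om'\to\om$ from above and below, the differentiability of $j$ forces $j'(\om)=\|\ubb_\om\|_{l^2}^2$. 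Since the solutions of \eqref{20} are related to the minimizers by $\vp_n=j(\om)^{1/(2\si)}u_n$, we conclude
$$
\|\vp_\om\|_{l^2}^2=j(\om)^{1/\si}\|\ubb_\om\|_{l^2}^2=j'(\om)\,j^{1/\si}(\om),
$$
which is precisely \eqref{335}.

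Finally I would pass to the stability criterion. Differentiating \eqref{335} at points where $j''$ exists yields
$$
\p_\om\|\vp_\om\|_{l^2}^2 = j''(\om)\,j^{1/\si}(\om)+\frac{1}{\si}j^{1/\si-1}(\om)\,(j'(\om))^2 = j^{1/\si}(\om)\left[j''(\om)+\frac{(j'(\om))^2}{\si\,j(\om)}\right]=j^{1/\si}(\om)\,s(\om).
$$
Because $j(\om)>0$, the sign of $\p_\om\|\vp_\om\|_{l^2}^2$ equals the sign of $s(\om)$, and by Corollary \ref{cor1} this is exactly the stability dichotomy. To recast $s>0$ as convexity of $j^{1+1/\si}$, compute directly
$$
\bigl(j^{1+1/\si}\bigr)''(\om)=\Bigl(1+\tfrac{1}{\si}\Bigr)j^{1/\si-1}(\om)\Bigl[j(\om)j''(\om)+\tfrac{1}{\si}(j'(\om))^2\Bigr]=\Bigl(1+\tfrac{1}{\si}\Bigr)j^{1/\si}(\om)\,s(\om),
$$
so $s(\om)>0$ a.e.\ is equivalent to convexity of $j^{1+1/\si}$.

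The only delicate point is that minimizers need not be unique a priori, but this is harmless: at any differentiability point of $j$, the one-sided envelope inequalities above pin down $\|\ubb_\om\|_{l^2}^2$ independently of the choice of minimizer, which is all that is needed. Everything else is a routine consequence of concavity and the scaling $\vp=j^{1/(2\si)}\ubb$.
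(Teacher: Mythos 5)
Your proof is correct, and for the key identity \eqref{335} it takes a genuinely different route from the paper. The paper obtains $j'(\om)=\|\ubb_\om\|_{l^2}^2$ by differentiating the Euler--Lagrange equation \eqref{EL:20} with respect to $\om$, rewriting the result as $\cl_+\bigl(\p_\om\ubb_\om+\tfrac{j'(\om)}{2\si j(\om)}\ubb_\om\bigr)=-\ubb_\om$, and pairing with $\ubb_\om^{2\si+1}$ (using that $\dpr{\p_\om\ubb_\om}{\ubb_\om^{2\si+1}}=0$ from the constraint); this presumes differentiability of $\om\mapsto\ubb_\om$ in $l^2$, which is not separately justified there. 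You instead use the envelope argument: $\ubb_\om$ is a competitor at level $\om'$, the map $\om'\mapsto J_{\om'}[\ubb_\om]$ is affine with slope $\|\ubb_\om\|_{l^2}^2$, and the resulting one-sided difference-quotient bounds pin down $j'(\om)=\|\ubb_\om\|_{l^2}^2$ at every point where the concave function $j$ is differentiable, with no regularity of the minimizer family required and no issue if minimizers are non-unique. Your version is therefore cleaner and arguably more rigorous at this step, while the paper's computation has the side benefit of identifying $\p_\om\ubb_\om$ in terms of $\cl_+^{-1}$, which connects directly to the GSS quantity used in Theorem \ref{theo:15}. The concavity argument (infimum of affine functions) and the final differentiation giving $\p_\om\|\vp_\om\|_{l^2}^2=j^{1/\si}(\om)s(\om)=\bigl(1+\tfrac{1}{\si}\bigr)^{-1}\bigl(j^{1+1/\si}\bigr)''(\om)$ coincide with the paper's. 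One trivial slip: the relevant embedding for $j(\om)\geq\om$ is $l^2\hookrightarrow l^{2\si+2}$ (so $\|\ubb\|_{l^2}\geq\|\ubb\|_{l^{2\si+2}}=1$), not the reverse as you wrote; the conclusion is unaffected and matches Lemma \ref{le:jw}.
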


Figures \ref{PTV_vs_om_d=1}--\ref{fig-HvsP}
% -\ref{fig-omvsP_1d}
show the diagrams
of $P$ and of $V$, as well as the Hamiltonian $H$ vs. $P$ for
different values of $\sigma$. It is important to highlight that in
line with
the earlier works of, e.g.,~\cite{malomed,mal2}, there exist values of
$\sigma$
(e.g., for $1.34<\sigma<2$ for $d=1$~\cite{mal2}) for which there is
multi-stability and the graph of $P$ vs. $\omega$ is non-monotonic;
see also Figure \ref{PTV_vs_om_d=1} for a case with $d=2$ (bottom
panels).

%values of $\sigma$. It is important to highlight 
Figures \ref{fig-stability-1d-sigma=1}--\ref{fig-stability-1d-sigma=2.5} (left panels) show $s(\om)$ compared to the quantity $\partial_\om \|\vp_\om\|_{\ell^2}^2$ from Corollary \ref{cor1}. According to Proposition \ref{prop:908} and Corollary \ref{cor1}, both functions can be used as indicators of spectral stability. We observe their zero crossings are identical and correspond to stability switches at the power extrema displayed in Figure \ref{PTV_vs_om_d=1} (top panels).

It is relevant to note here that for ease of visualization in these
figures, we multiply $s(\om)$ by a constant pre-factor so that it is
on the same order as $\partial_\om \|\vp_\om\|_{\ell^2}^2$. Of course,
this has no effect on the zero crossings and the overall sign which
are central for our stability conclusions.

To corroborate the analytical results further, the middle and right
panels of
Figs. \ref{fig-stability-1d-sigma=1}--\ref{fig-stability-1d-sigma=2.5}
illustrate stability switching via numerical computation of spectral
stability eigenvalues in Eq.~(\ref{200}), for fixed $\om$ in the
regions between the zero crossings of $s(\om)$. For $\sigma = 2, 2.5$,
the zero crossings occur approximately at $\om = 1, 1.2$, so we check
the stability at $\om=0.5$ and at $\om = 1.5$. Our results predict
that in one of these cases the wave should be stable, while in the
other it should be unstable. For $\sigma=1.5$, $s(\om)$ features two
zero crossings, near $\om=0.4$ and $\om=0.8$. Hence it is also
relevant to test a value to the left of $\om=0.4$, one between the two
crossings and one to the right of the rightmost crossing; we choose
$\om=0.1$,
$0.5$ and $1.5$, respectively.
%For $\sigma=1$ there are no zero crossings, so we expect stability
%for all values of $\om$.
For consistency, we display the $\sigma=2$ and $2.5$ cases
 for the same test values $\om = 0.5, 1.5$, observing that in the
 latter
 too, a zero-crossing occurs. This is, respectively, at $\omega=1$
 and $\omega=1.2$ for these two cases. In all the cases, our
 stability conclusions are in line with the standard VK criterion
 and the newly proposed criterion of Eq.~(\ref{350}).

 Moreover, our numerical computations lead us to
   conjecture here the following conclusion: for the critical index
   $\si=\f{2}{d}$, the excitation threshold is achieved precisely at $\omega=1$
   and moreover $\vp_\om$, for $\om>1$ are stable.
   %(by the way, it also looks as if instability happens for
   %$0<\om<1$, for both $d=1,2$).
% Perhaps you mean $d=2,3$??
   That is, 
   $\min_{\om>0} \|\vp_\om\|^2=\|\vp_1\|^2$, $\p_\om  \|\vp_\om\|^2>0$
   for $\om>1$. The opposite monotonicity is true leading to
   instability for $\om <1$.

\begin{figure}[htb]
\includegraphics[scale=0.33]{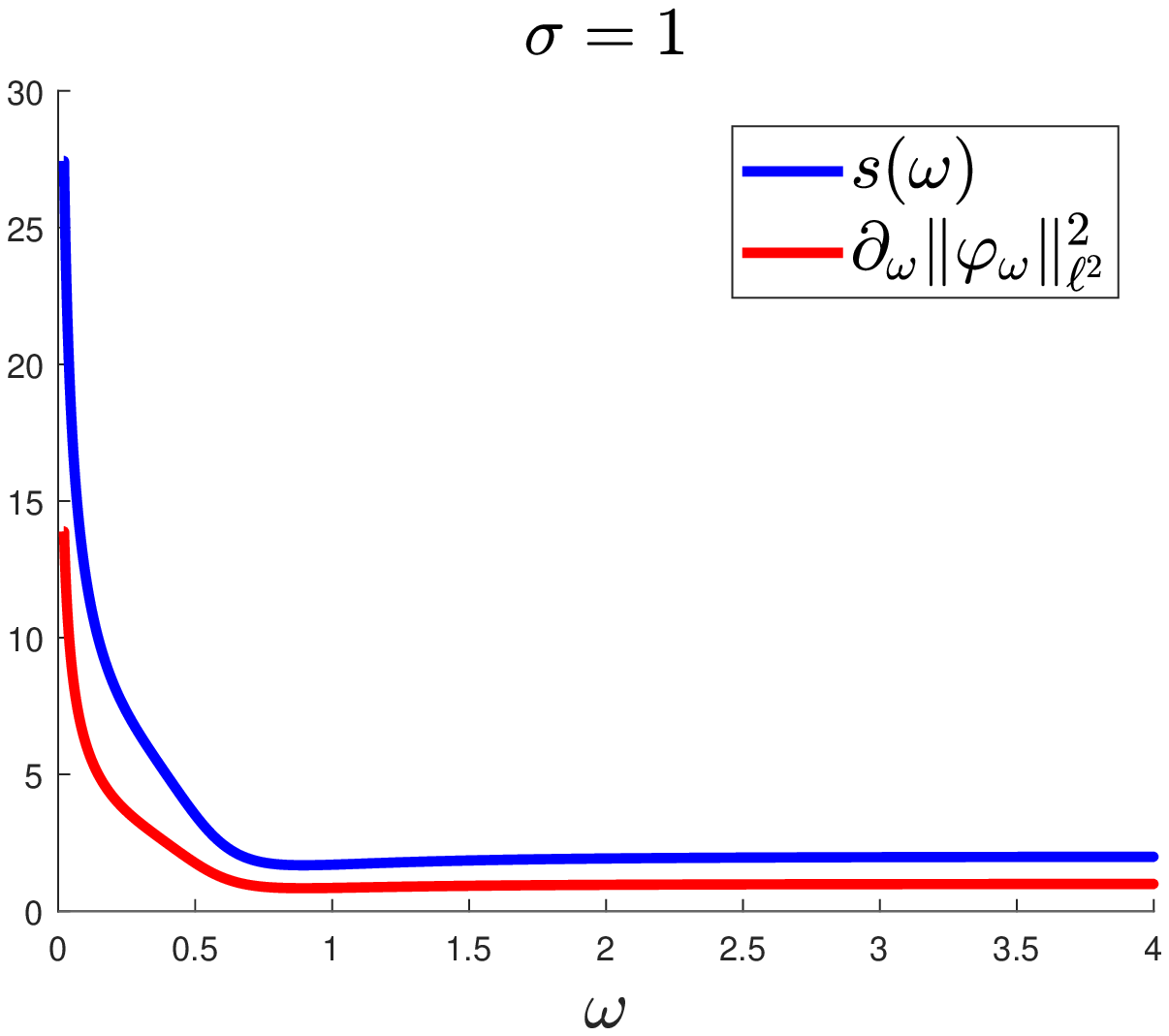}
\includegraphics[scale=0.33]{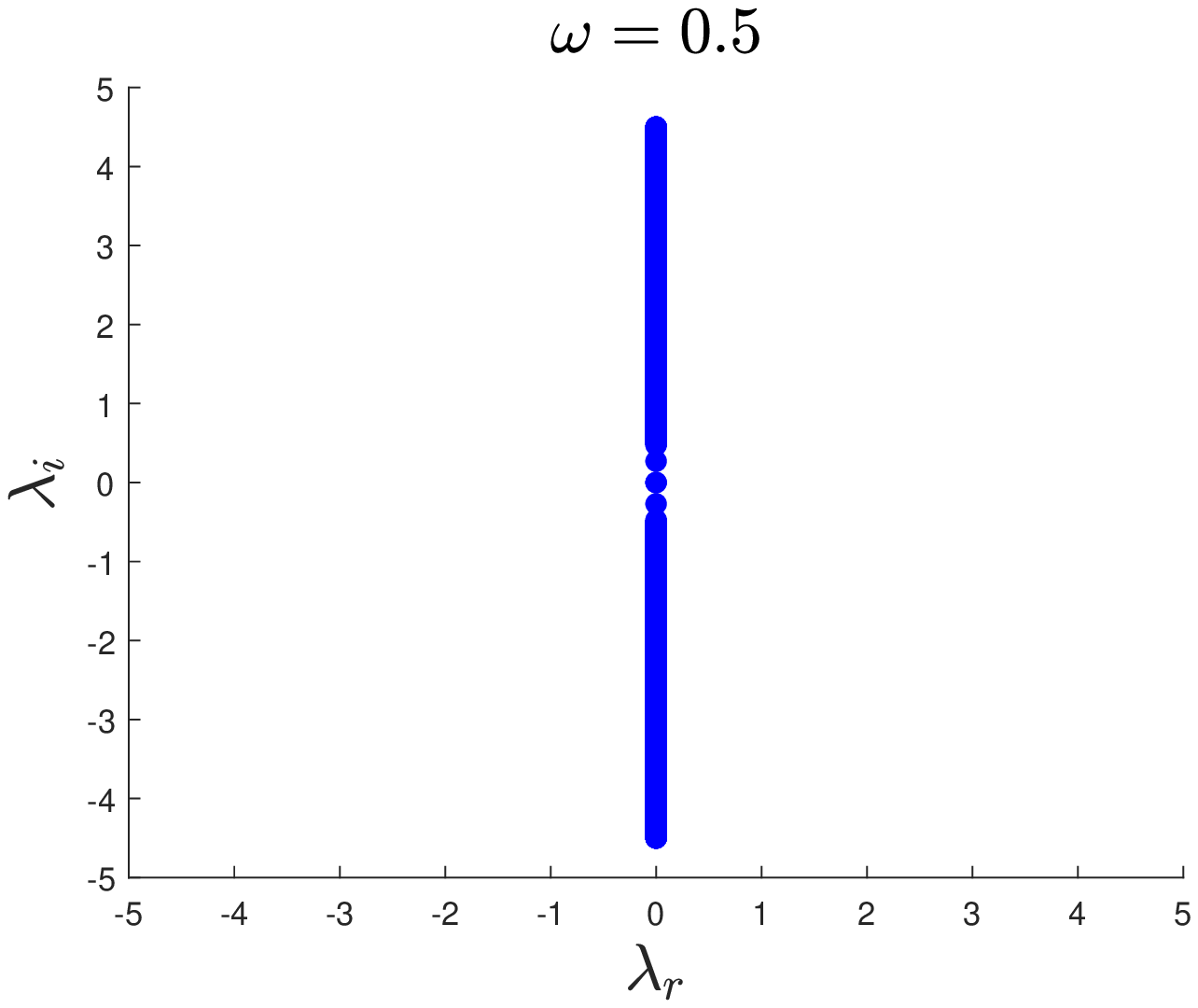}
\includegraphics[scale=0.33]{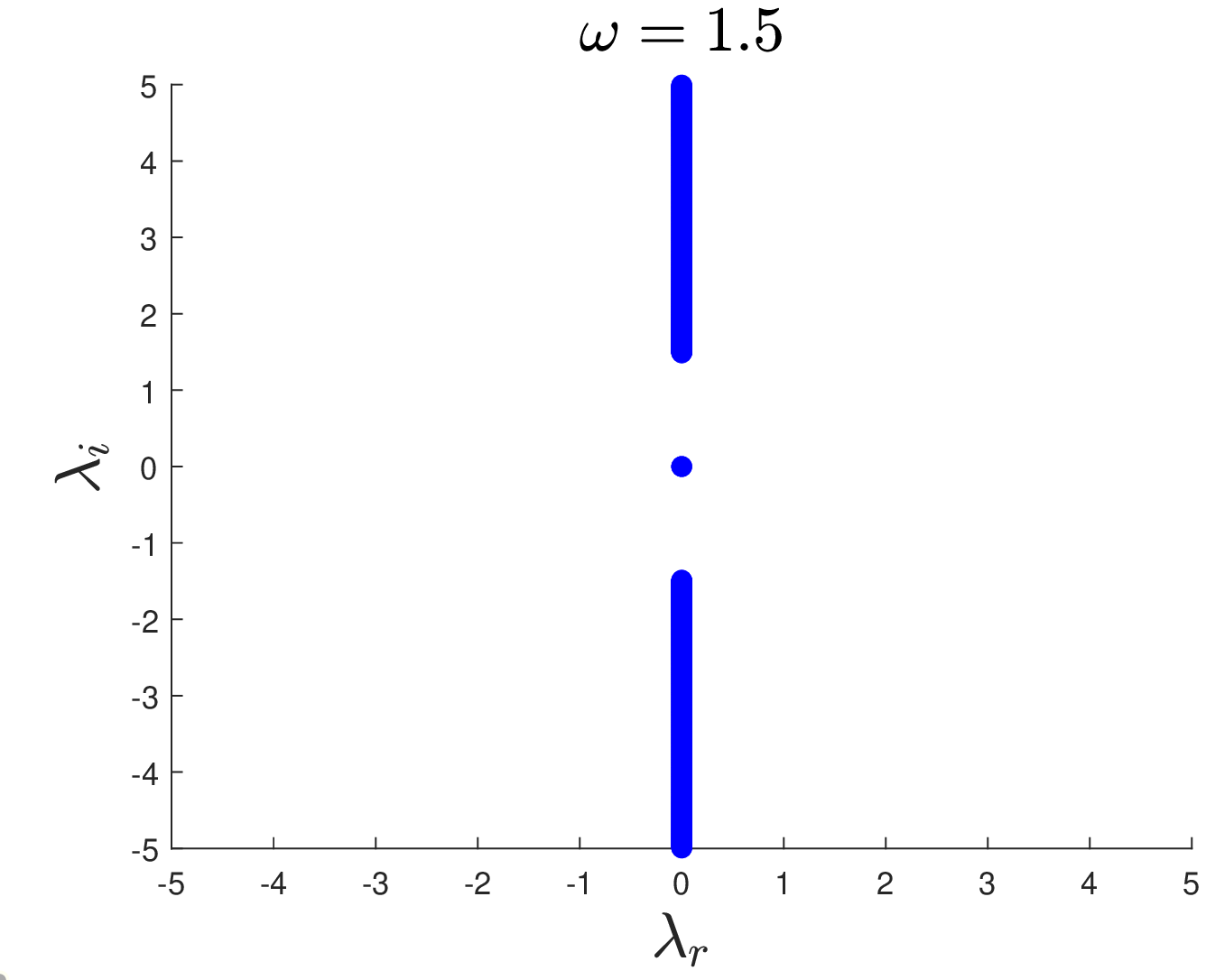}

\caption{The functions $s(\om)$ and $\partial_\om \|\vp_\om\|_{\ell^2}^2$ vs. $\om$ in one dimension, for $\sigma=1$ (left). The middle and right panels show the spectral plane $(\la_r,\la_i)$ for eigenvalues $\la = \la_r + i\la_i$ of Eq. \eqref{200}, for $\om=0.5$ (middle) and $\om=1.5$ (right). As expected, both waves are spectrally stable.}
\label{fig-stability-1d-sigma=1}
\end{figure}

\begin{figure}[htb]
\includegraphics[scale=0.24]{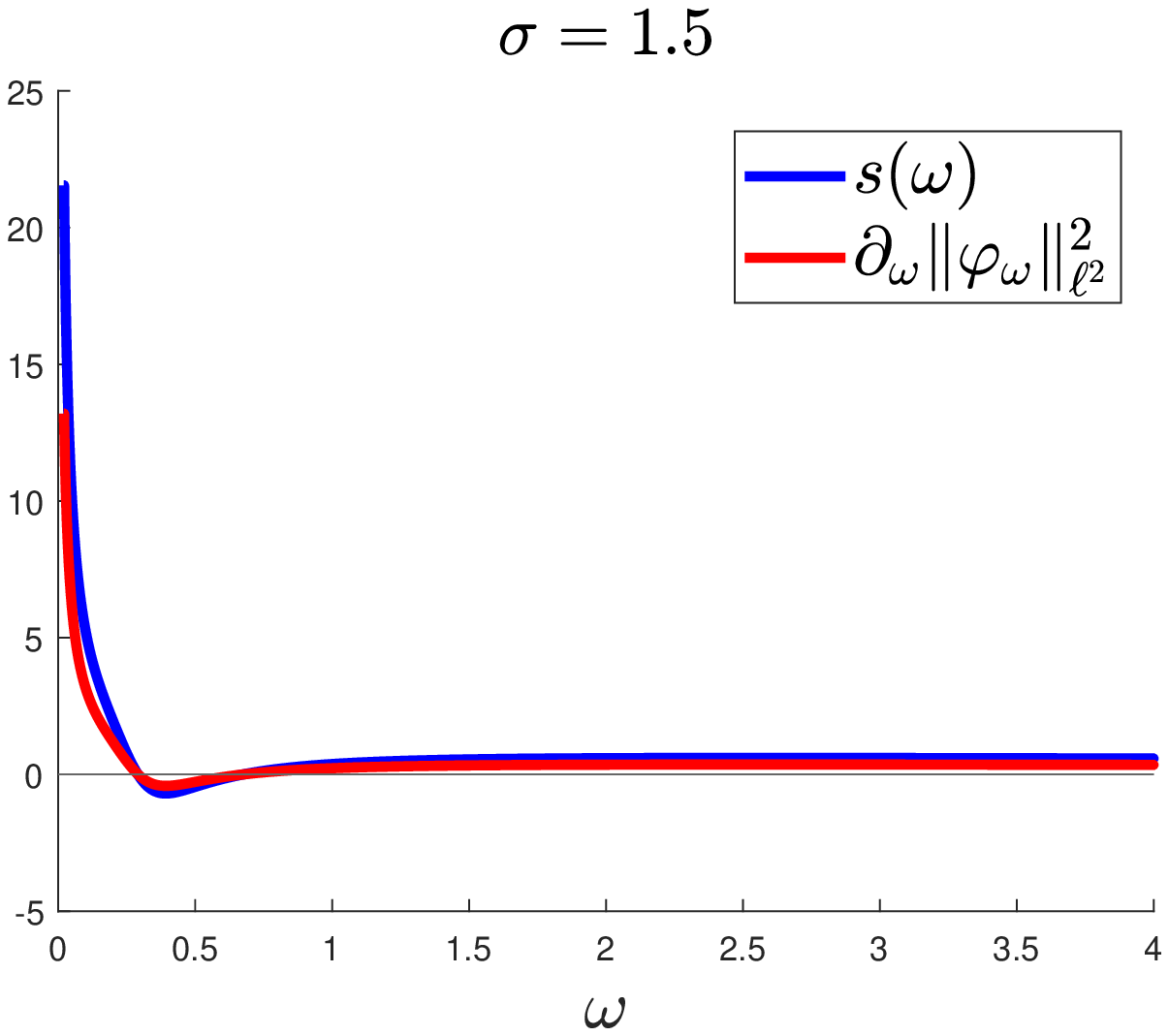}
\includegraphics[scale=0.24]{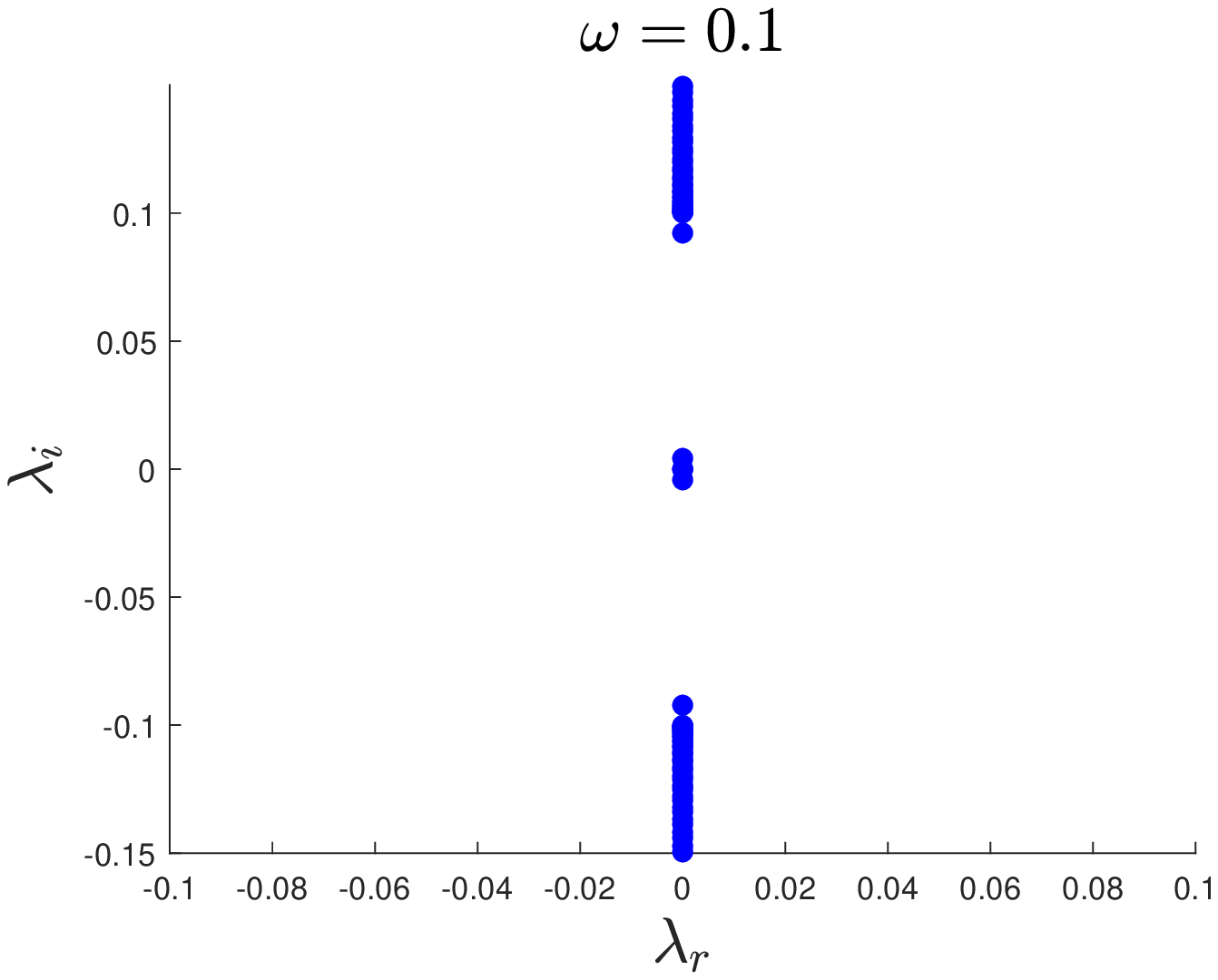}
%\\
\includegraphics[scale=0.24]{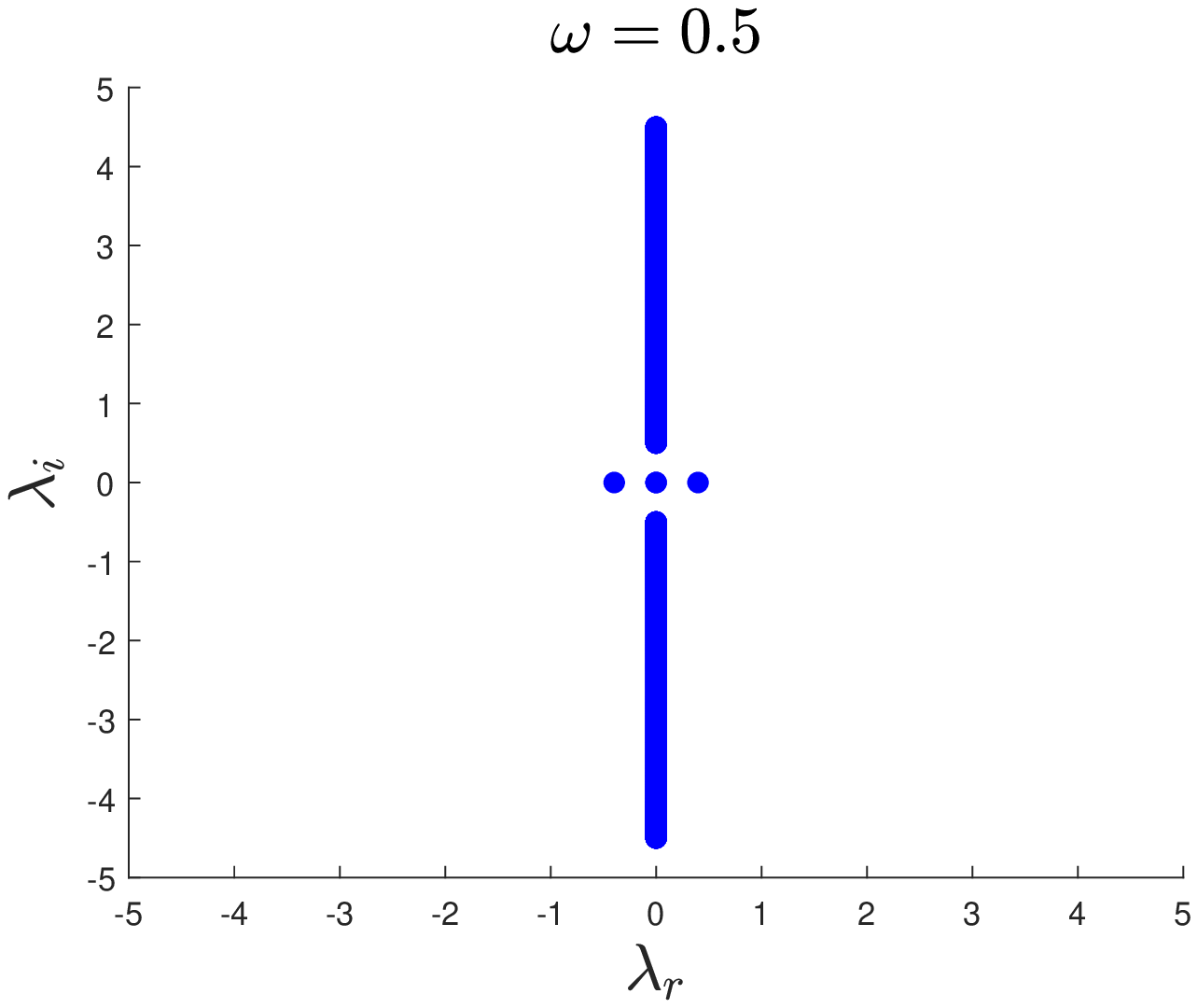}
\includegraphics[scale=0.24]{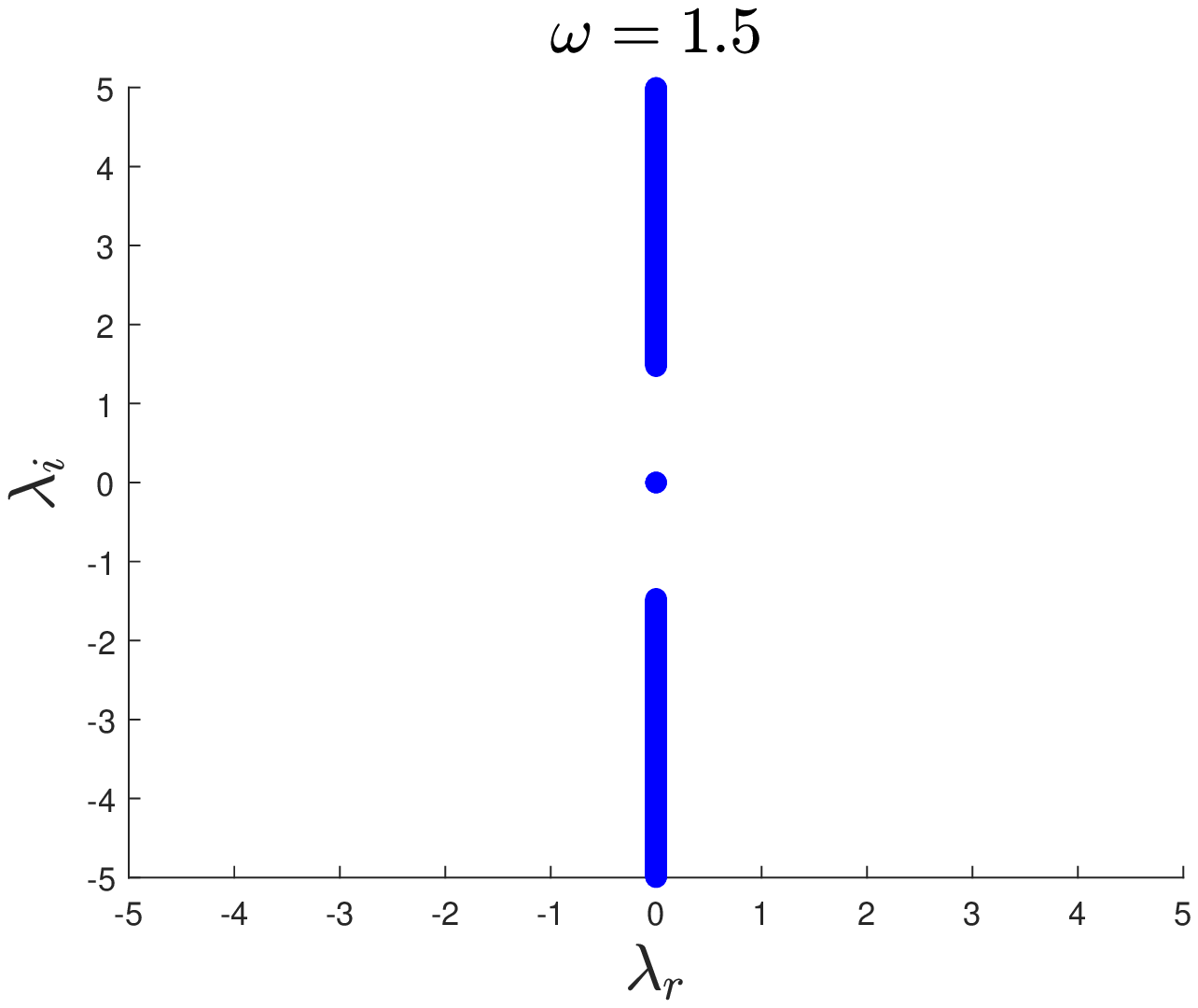}

\caption{Similar to Figure \ref{fig-stability-1d-sigma=1} but now for
  $\sigma = 1.5$. The zero crossings of $s(\om)$ occur near $\om
  \approx 0.4$ and $\om \approx 0.8$.}
\label{fig-stability-1d-sigma=1.5}
\end{figure}

\begin{figure}[htb]
\includegraphics[scale=0.33]{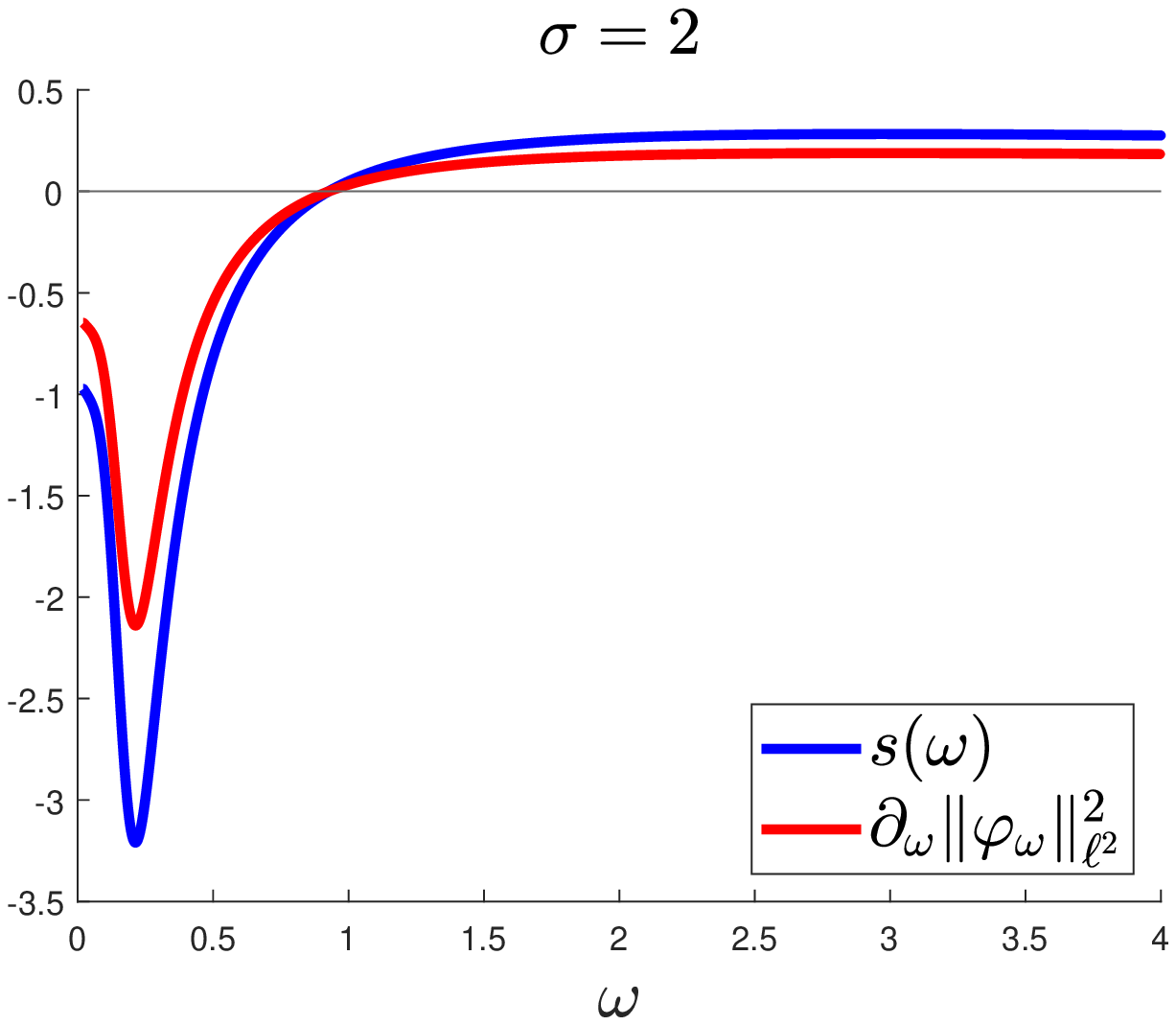}
\includegraphics[scale=0.33]{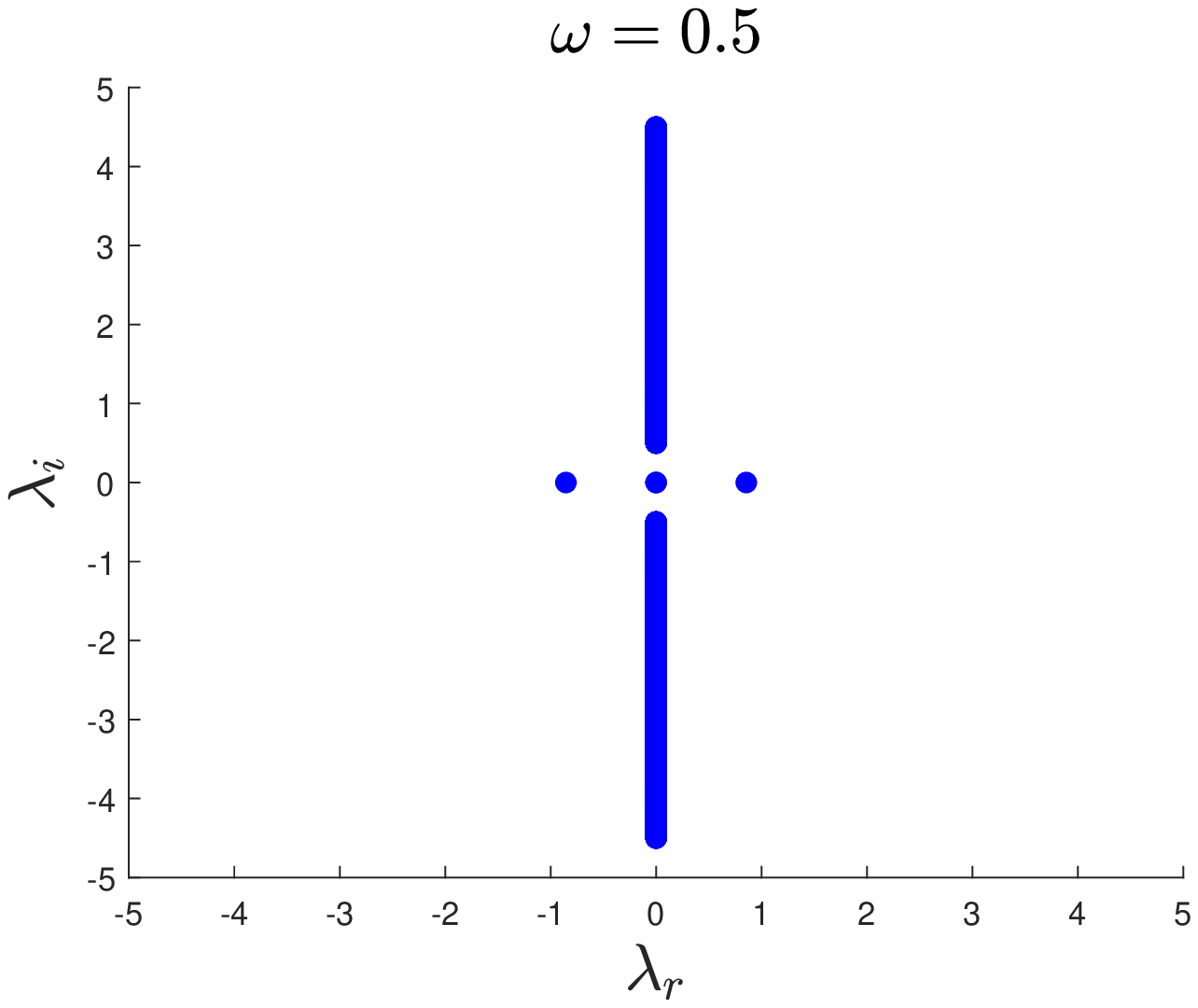}
\includegraphics[scale=0.33]{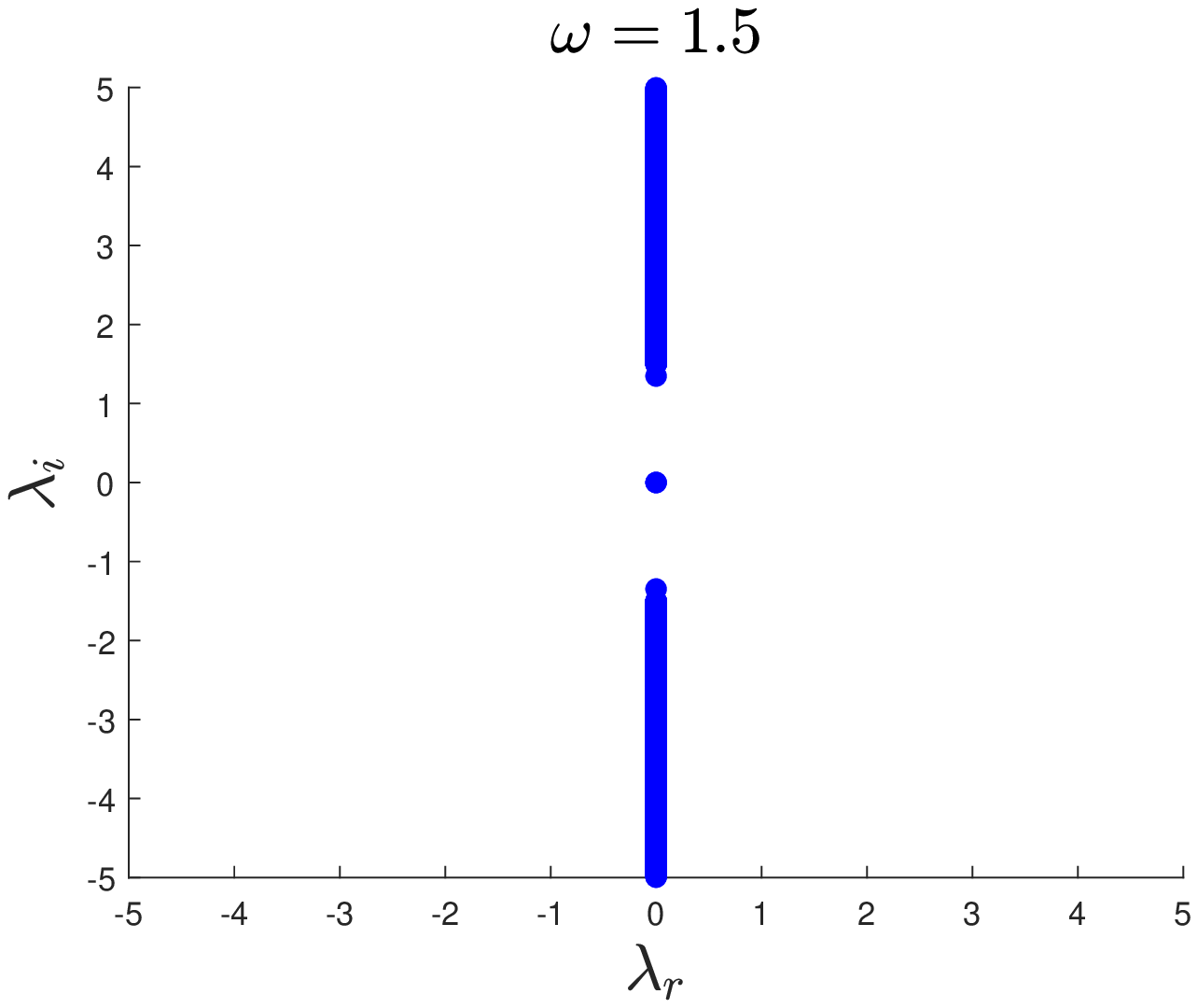}

\caption{Similar to Figure \ref{fig-stability-1d-sigma=1}, but now for $\sigma = 2$. The zero crossing occurs at $\om=1$. }
\label{fig-stability-1d-sigma=2}
\end{figure}

\begin{figure}[htb]
\includegraphics[scale=0.33]{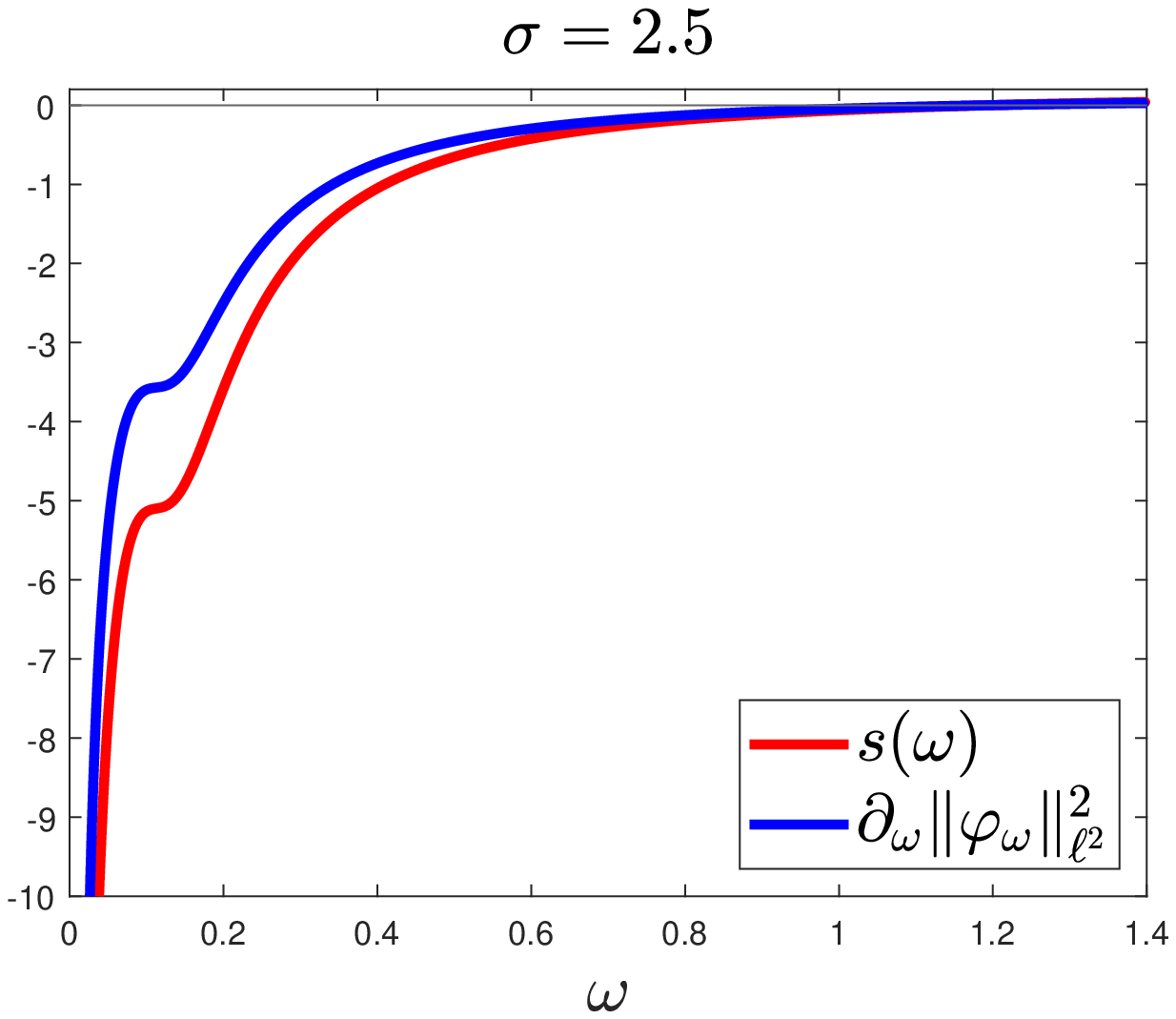}
\includegraphics[scale=0.33]{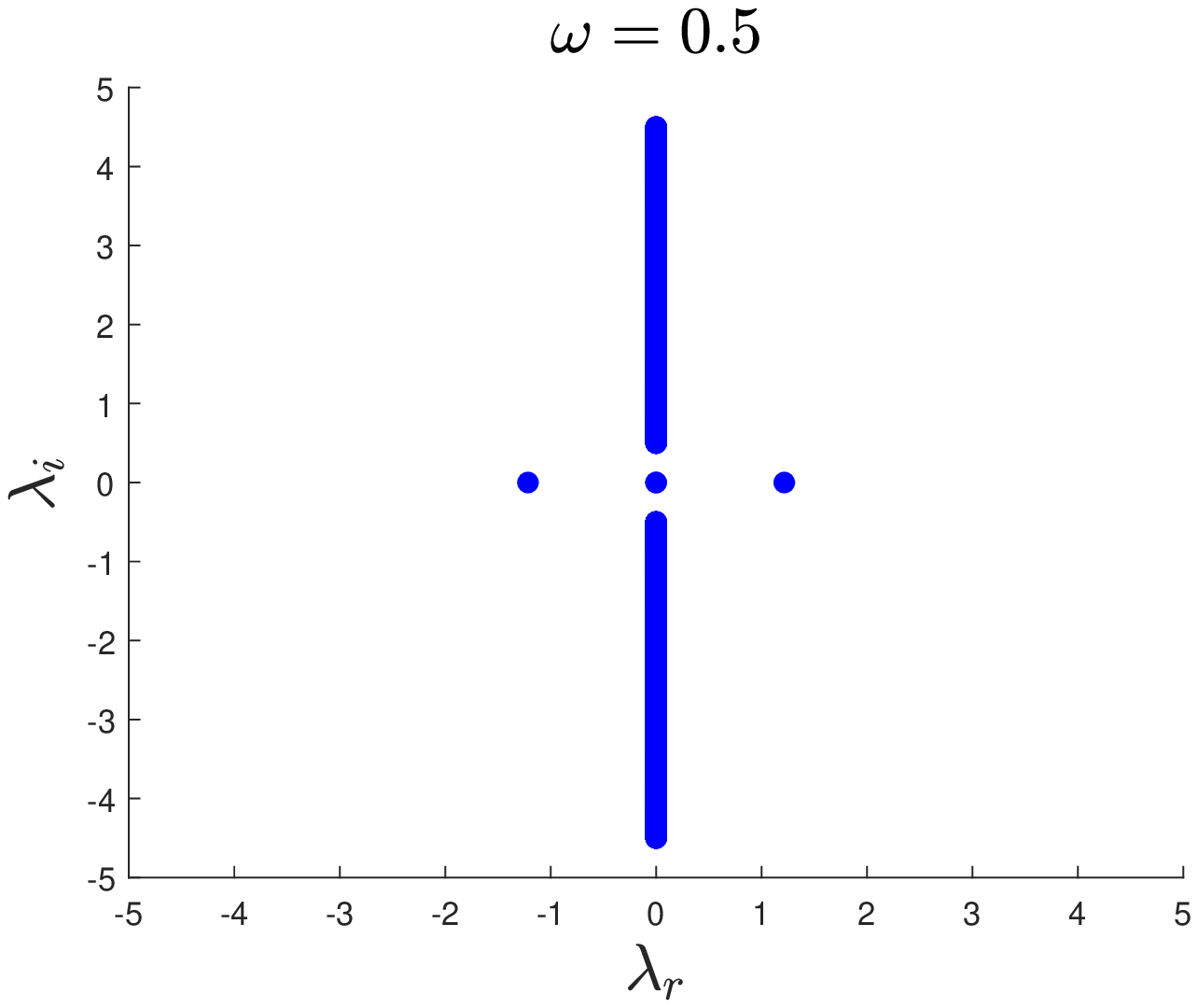}
\includegraphics[scale=0.33]{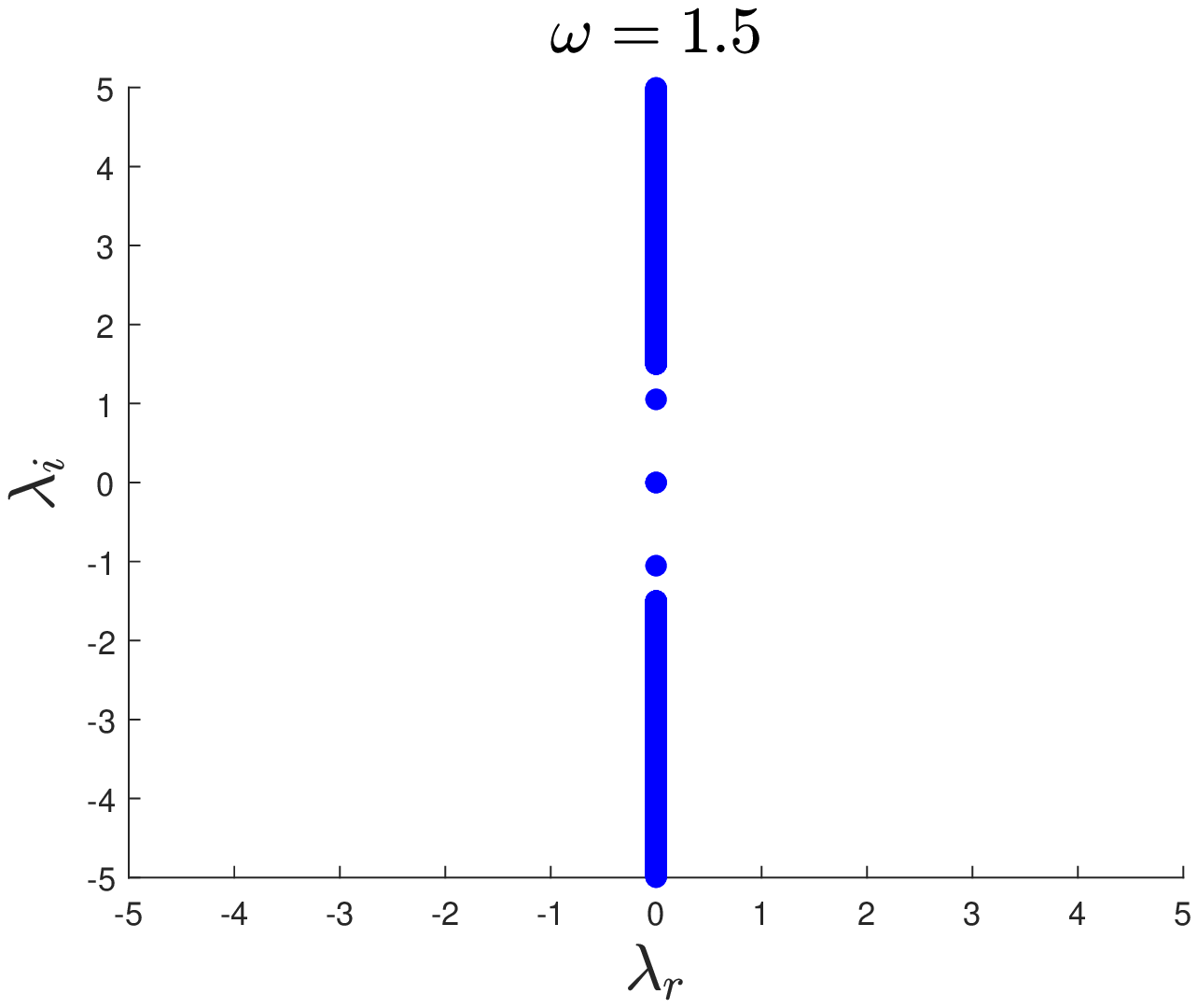}
\caption{Similar to Figure \ref{fig-stability-1d-sigma=1}, but now for $\sigma = 2.5$. The zero crossing occurs near $\om = 1.2$.}
\label{fig-stability-1d-sigma=2.5}
\end{figure}

\section {Preliminaries} 
We consider  the following spaces 
$$
l^p=\{\{x_n\}_{n=-\infty}^\infty: \|x\|_{l^p}=\left(\sum_{n=-\infty}^\infty |x_n|^p\right)^{1/p}<\infty \}.
$$   

In particular the spaces $l^2$ can be identified via the isometry map $\cf: l^2\to L^2[0,1]$  as follows  
$$
x(\xi)=\cf[\{x_n\}]:=\sum_n x_n e^{2\pi i n \xi},\ \ x_n=\int_0^1 x(\xi) e^{-2\pi i n \xi} d\xi.
$$
Note that $\|x\|_{L^2[0,1]}=\|\{x_n\}\|_{l^2}$.  Sometimes, we denote ${\mathbf u}=\{u_n\}_{n=-\infty}^\infty$ and ${\mathbf u}(\xi):=\sum_n u_n e^{2\pi i n \xi}$. We will often  tacitly identify the sequence ${\mathbf u}$ and the function ${\mathbf u}(\xi)$. 
Note the basis vectors $e_n$, which are defined via the Kronecker
$\delta$'s:
$
e_n(m)=\left\{
\begin{array}{cc}
1 & n=m \\
0 & n\neq m
\end{array}.
\right.
$
\subsection{The discrete Laplacian}
\label{sec:2.1} 
 The one dimensional discrete Laplacian is given explicitly by 
 $$
 (\De_{disc.} {\mathbf u})_n=u_{n+1}-2u_n+2 u_{n-1}.
 $$
 In fact, $\De_{disc}$ has a nice representation in terms of the shift operators. Indeed, let $S:l^2\to l^2$, be defined by $S[{\mathbf u}](n)=u_{n+1}$, while its inverse/adjoint 
 $S^*=S^{-1}$, is given by $S^{-1}[{\mathbf u}](n)=u_{n-1}$. In such a case,  we can represent $ \De_{disc}= S+S^{-1}- 2 Id$. 
  On the level of $L^2[0,1]$ functions, we have 
  \begin{eqnarray*}
  	u_{n+1}-2u_n+2 u_{n-1} &=&  \int_0^1 \ubb(\xi) [e^{-2\pi i (n+1) \xi} +e^{-2\pi i (n-1) \xi}-2 e^{-2\pi i n \xi}]d\xi= \\
  	&=& -4 \int_0^1 \ubb(\xi)\sin^2(\pi \xi) e^{-2\pi i n \xi} d\xi. 
  \end{eqnarray*} 
  That is, on the set of $L^2[0,1]$ functions, the discrete Laplacian can be realized as the  Fourier multiplier $-4\sin^2(\pi \xi)$. 
  
  More generally, for reasonable functions $f(-\De_{disc})$ acts as follows $f(-\De_{disc}) \ubb$ corresponds to a function $\ubb(\xi)f(4 \sin^2(\pi \xi))$. For example, one can easily see (summation by parts) that 
  $$
  \dpr{-\De_{disc} \ubb}{\ubb}=\sum_n (2u_n-u_{n+1}- u_{n-1})\bar{u}_n = \sum_n |u_{n+1}-u_n|^2\geq 0,
  $$
  whence $-\De_{disc}$ is a positive operator. Equivalently, one could have computed this on the level of functions $\ubb(\xi)$ as follows 
  $$
  \dpr{-\De_{disc} \ubb}{\ubb}=4 \int_0^1 |\ubb(\xi)|^2 \sin^2(\pi\xi) d\xi. 
  $$
  In particular, since 
  $
  \int_0^1 |\ubb(\xi)|^2 \sin^2(\pi\xi) d\xi\leq \|\ubb\|^2, 
  $
  we conclude that 
  $$
  \dpr{-\De_{disc} \ubb}{\ubb}\leq 4 \|\ubb\|^2,
  $$
  whence $0<-\De_{disc}\leq 4$ is a bounded operator, with norm at most\footnote{one can easily see that the norm is actually exactly four, by taking test functions $\ubb$ supported near $\xi\sim \f{1}{2}$} $4$. 
  In higher dimensions, $d\geq 1$, one might reach   similar conclusions about $\De_{disc}$, for example $0<-\De_{disc}\leq 4d$.

 \subsection{The heat semigroup $e^{t\De_{disc}}$ and the Perron-Frobenius property}
 
 We  introduce  the semigroup generated by $\De_{disc}$   via the heat equation on $l^2$, namely 
 \begin{equation}
 \label{101}
 \left\{
 \begin{array}{l}
 \p_t u_n(t)= u_{n+1}(t)-2u_n + u_{n-1}(t)=(\De_{disc}\ubb)_n. \\
 u_n(0)=u_n
 \end{array}
 \right.
 \end{equation}
 Equivalently, the solution is given by $\ubb(t)$, given by $\ubb(0, \xi)e^{-4t\sin^2(\pi \xi)}$. More explicitly 
 \begin{equation}
 \label{201}
 u_n(t)=\sum_{m} u_m \int_0^1 e^{-4t\sin^2(\pi \xi)} e^{-2\pi  i (n-m) \xi} d\xi.
 \end{equation}
 Introduce the coefficients $K_n(t)=\int_0^1 e^{-4t\sin^2(\pi \xi)} e^{-2\pi  i n \xi} d\xi$ of the heat semigroup. In other words, 
 $$
 \ubb(t)=e^{t \De_{disc}}\ubb(0)=\sum_m K_{n-m}(t) u_m(0)={\mathbf K(t)}*\ubb(0).
 $$
 \begin{proposition}
 	\label{prop:10}
 	For every $t>0$, the sequence  $\{K_n(t)\}$ is even, positive and bell-shaped. That is, for every $n\geq 1$,  $K_n(t)=K_{-n}(t)>0$. For every $t\in (0,1]$, ${\mathbf K}$ is bell-shaped as well, that is  $K_0(t)> K_1(t)> \ldots $. 
 \end{proposition}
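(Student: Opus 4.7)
The plan is to identify $K_n(t)$ explicitly via the generating function for modified Bessel functions and then read off each property from the resulting power series. Evenness is essentially free: since $\sin^2(\pi\xi)$ is symmetric about $\xi=1/2$, the imaginary part of $e^{-2\pi i n\xi}$ integrates to zero and
$$
K_n(t) = \int_0^1 e^{-4t\sin^2(\pi\xi)}\cos(2\pi n\xi)\,d\xi,
$$
so $K_n(t)\in \rone$ and $K_n(t)=K_{-n}(t)$.

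For positivity I would factor $e^{-4t\sin^2(\pi\xi)} = e^{-2t}\,e^{2t\cos(2\pi\xi)}$ using $2\sin^2(\pi\xi)=1-\cos(2\pi\xi)$, and apply the Jacobi--Anger expansion
$$
e^{2t\cos(2\pi\xi)} = \sum_{m\in\cz} I_m(2t)\,e^{2\pi i m\xi}, \qquad I_m(z) = \sum_{k=0}^\infty \f{(z/2)^{2k+|m|}}{k!(|m|+k)!},
$$
where $I_m$ is the modified Bessel function of the first kind, satisfying $I_{-m}=I_m$ for integer $m$. Reading off the $n$-th Fourier coefficient yields the closed form
$$
K_n(t) = e^{-2t}\,I_{|n|}(2t) = e^{-2t}\sum_{k=0}^\infty \f{t^{|n|+2k}}{k!(|n|+k)!},
$$
which is a sum of strictly positive terms for $t>0$, and in which evenness is manifest. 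A more conceptual route in the spirit of Perron--Frobenius is to factor the semigroup as $e^{t\De_{disc}} = e^{-2t}\,e^{tS}\,e^{tS^{-1}}$ (using that $S$ and $S^{-1}$ commute) and to observe that $e^{tS}$ and $e^{tS^{-1}}$ both have nonnegative convolution kernels, since their power series in $t$ have nonnegative coefficients.

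For the bell-shape in the regime $t\in(0,1]$, I would compare consecutive terms: for $n\geq 0$,
$$
K_n(t) - K_{n+1}(t) = e^{-2t}\sum_{k=0}^\infty \f{t^{n+2k}}{k!(n+1+k)!}\bigl((n+1+k) - t\bigr).
$$
Each factor $(n+1+k)-t$ is at least $1-t\geq 0$ under the hypothesis $t\leq 1$; it is strictly positive for every $k\geq 0$ when $t<1$, and strictly positive for every $k\geq 1$ when $t=1$. In either subcase the sum is strictly positive and $K_n(t)>K_{n+1}(t)$, yielding the strict bell-shape.

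The main difficulty is really just the clean identification of the integral $K_n(t)$ with the modified Bessel series; once that is in place, all three claims reduce to inspection of a manifestly positive power series, and the restriction $t\leq 1$ enters only at the monotonicity step, through the lower bound $n+1+k\geq 1\geq t$. Any attempt to derive the bell-shape directly from the integral representation (for instance via a sum-to-product identity applied to $\cos(2\pi n\xi)-\cos(2\pi(n+1)\xi)$) runs into sign-changing integrands and does not lend itself to the same term-by-term argument.
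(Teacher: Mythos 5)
Your proof is correct and follows essentially the same route as the paper: evenness from the integral representation, and positivity together with the bell-shape from the explicit series $K_j(t)=e^{-2t}t^j\sum_{k\ge 0}t^{2k}/(k!(k+j)!)$, which the paper obtains by binomially expanding $e^{t(S+S^{-1})}$ and you obtain equivalently from the modified-Bessel generating function. Your term-by-term computation of $K_n(t)-K_{n+1}(t)$, with the factor $(n+1+k)-t\ge 1-t$, is a welcome explicit justification of the monotonicity step that the paper dispatches with ``clearly.''
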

 \begin{proof}
 	First, we see that if $\{u_n(0)\}_n$ is real, then $\{u_n(t)\}_n$ is real, which implies that $K_n(t)$ is real as well. Thus, 
 	$$
 	K_n(t)=\int_0^1 e^{-4t\sin^2(\pi \xi)} \f{e^{-2\pi  i n \xi} + e^{2\pi  i n \xi}}{2} d\xi = \int_0^1 e^{-4t\sin^2(\pi \xi)}  \cos(\pi n \xi) d\xi. 
 	$$
 	From this formula, it is clear that $K(n)=K(-n)$.  

 	We now show that $K(j)\geq K(j+1), j=0, 1, \ldots$.  
 	
 	For this step, we use the following explicit formula, which is obtained by the Taylor expansion of the exponential function. We have 
 	\begin{eqnarray*}
 		e^{2t} e^{ t \De_{disc}}&=&  e^{t(S+S^{-1})}= \sum_{n=0}^\infty  \f{t^n(S+S^{-1})^n}{n!} =  \sum_{n=0}^\infty \f{t^n}{n!} \sum_{k=0}^n S^{n-2k} 
 		\left(\begin{array}{c} n \\ k \end{array} \right) = \\
 		&=& \sum_{j=0}^\infty t^j (S^j+S^{-j}) \sum_{k=0}^\infty \f{t^{2k}}{k! (k+j)!}.
 	\end{eqnarray*}
 	From this last formula, it follows that for $j\geq 0$, 
 	\begin{equation}
 		\label{par:7} 
 		K_{-j}(t)=K_j(t)= e^{-2t} t^j \sum_{k=0}^\infty \f{t^{2k}}{k! (k+j)!}. 
 	\end{equation}
 	Clearly, at least for $t\in (0,1]$, $j\geq 0$, we have $K_j(t)> K_{j+1}(t)$. 
 \end{proof}
 We now turn our attention to a general (linear)  Schr\"odinger operator in the form 
 $$
 (\cl f)_n=-\De_{disc} f_n - V_n f_n=\sum_{j\in\cz^d: |j-n|=1} (f_{j}-f_n) - V_n f_n,
 $$
 for bounded potentials ${\mathbb V}=\{V_n\}_{n\in \cz^d}$.  From the general form of Weyl's criteria (see the proof of Theorem \ref{prop:PF} in the Appendix), we show that  $\si_{ess}(\cl)=\si_{ess}(-\De_{disc})= \si(-\De_{disc})=[0, 4d]$. Our interest is in the lowest eigenvalue of $\cl$, if such an object exists. Assume that it does - by the min-max principle, we   assume that  
 $$
 \la_0(\cl)=\inf_{\|{\mathbf f}\|=1} \dpr{\cl {\mathbf f}}{{\mathbf f}}=\inf _{\|{\mathbf f}\|=1} \sum_{n\in \cz^d} \left(\sum_{j\in\cz^d: |j-n|=1} |f_{j}-f_n|^2 - V_n f_n^2\right)<0.
 $$
 In the continuous case, it is well-known that such an eigenvalue is
 simple, with pointwise positive eigenfunction. This is known  as the
 Perron-Frobenius theorem and so we refer to it as the
 Perron-Frobenius property. We have the same result for discrete
 Schr\"odinger operators, which to the best of our knowledge  appears to be a new result.

 \begin{theorem}(Discrete Schr\"odinger operators are Perron-Frobenius) 
 	\label{prop:PF} 
 	
 	Let $d\geq 1$ and ${\mathbb V}\in l^\infty(\cz^d)$. Assume that $\inf_{\|{\mathbf f}\|=1} \dpr{\cl {\mathbf f}}{{\mathbf f}}<0$. Then, $ \la_0(\cl)=\inf_{\|{\mathbf f}\|=1} \dpr{\cl {\mathbf f}}{{\mathbf f}}$ is a simple eigenvalue for $\cl$, with an eigenfunction (ground state) ${\mathbf g}: g_n\geq 0$.  In particular, there exists $\de>0$, so that $\cl_{\{{{\mathbf g}\}^\perp}}\geq \la_0(\cl)+\de$.
 \end{theorem}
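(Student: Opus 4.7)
The plan is to follow the classical Perron-Frobenius scheme, using the positivity of the heat kernel from Proposition \ref{prop:10} in place of the usual positivity-improving property. First, the essential-spectrum identification $\sigma_{ess}(\cl)=\sigma_{ess}(-\De_{disc})=[0,4d]$ (proved by a standard Weyl-sequence argument, deferred to the appendix) together with the hypothesis $\la_0(\cl)<0$ places $\la_0$ strictly below $\sigma_{ess}(\cl)$, so $\la_0$ is an isolated eigenvalue of finite multiplicity, and the existence of at least one minimizer ${\mathbf g}$ with $\|{\mathbf g}\|=1$ is immediate from the spectral theorem.

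Next I would show that $|{\mathbf g}|$ is also a minimizer, hence also an eigenfunction. The key identity is
\[
\dpr{-\De_{disc}{\mathbf f}}{{\mathbf f}}=\sum_{|j-n|=1}|f_j-f_n|^2\geq \sum_{|j-n|=1}(|f_j|-|f_n|)^2=\dpr{-\De_{disc}|{\mathbf f}|}{|{\mathbf f}|},
\]
combined with $\dpr{V{\mathbf f}}{{\mathbf f}}=\dpr{V|{\mathbf f}|}{|{\mathbf f}|}$. Replacing ${\mathbf g}$ by $|{\mathbf g}|$, I may assume ${\mathbf g}\geq 0$. To upgrade this to strict positivity, shift the potential: with $c=\|V\|_{l^\infty}$, the operator $\wt V:=V+c\geq 0$ and $e^{-t\cl}=e^{tc}\,e^{t(\De_{disc}+\wt V)}$. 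Since $\wt V\geq 0$, Duhamel's formula (iterated) gives the pointwise inequality $e^{t(\De_{disc}+\wt V)}{\mathbf g}\geq e^{t\De_{disc}}{\mathbf g}$, and the right-hand side is strictly positive at every site by Proposition \ref{prop:10}, because $K_n(t)>0$ for all $n$ and all $t>0$. On the other hand, $e^{-t\cl}{\mathbf g}=e^{-t\la_0}{\mathbf g}$, so $g_n>0$ for every $n\in\cz^d$.

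The same argument shows that \emph{every} real-valued eigenfunction ${\mathbf h}$ of $\la_0$ satisfies $|{\mathbf h}|_n>0$ at every site. Comparing the two identities
\[
\sum_{|j-n|=1}h_j=(2d-V_n-\la_0)h_n,\qquad \sum_{|j-n|=1}|h_j|=(2d-V_n-\la_0)|h_n|,
\]
and noting that $2d-V_n-\la_0>0$ (forced by the second identity and $|h_n|>0$), one gets equality in the triangle inequality $|\sum h_j|=\sum|h_j|$ at every $n$. This forces all nearest neighbors of any site to share the same sign, and connectedness of $\cz^d$ then gives that ${\mathbf h}$ has constant sign.

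Finally, simplicity follows at once: if ${\mathbf g}_1,{\mathbf g}_2$ were two linearly independent real ground states, both can be chosen strictly positive, and then
\[
{\mathbf h}:={\mathbf g}_1-\f{\dpr{{\mathbf g}_1}{{\mathbf g}_2}}{\|{\mathbf g}_2\|^2}{\mathbf g}_2
\]
is a nonzero real ground state orthogonal to ${\mathbf g}_2>0$. The previous step says ${\mathbf h}$ has a definite sign, but orthogonality against the strictly positive ${\mathbf g}_2$ then forces ${\mathbf h}\equiv 0$, a contradiction. The spectral gap $\la_0(\cl)+\de$ then follows from the isolation of $\la_0$ below $\sigma_{ess}(\cl)=[0,4d]$ and the simplicity just established. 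The main obstacle I anticipate is the essential-spectrum identification, which requires some decay of $V$ beyond mere boundedness to run the Weyl-sequence construction cleanly; the remaining ingredients are rather direct adaptations of the continuous Perron-Frobenius argument, with strict positivity of $K_n(t)$ doing all the heavy lifting.
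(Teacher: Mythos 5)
Your argument is correct and reaches the stated conclusions, but it takes a genuinely different route from the paper's. The paper proves that the full semigroup generated by $\cl$ is positivity \emph{improving}: it runs the Born series for a general bounded potential, obtains the a priori bound $|\ubb_n(t)|\leq e^{\|{\mathbb V}\|_{l^\infty}t}\bigl(e^{t\De_{disc}}|\uff|\bigr)(n)$, deduces strict positivity for small $t$ from the factor $1-t\|{\mathbb V}\|_{l^\infty}e^{t\|{\mathbb V}\|_{l^\infty}}>0$, extends to all $t$ by the semigroup property, and then invokes the abstract equivalence of Reed--Simon, Theorem XIII.44, which delivers simplicity and positivity of the ground state in one stroke. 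You instead carry out the classical Perron--Frobenius scheme by hand: the rearrangement inequality $\sum|f_j-f_n|^2\geq\sum(|f_j|-|f_n|)^2$ to pass to $|{\mathbf g}|$; the shifted potential $\wt V={\mathbb V}+\|{\mathbb V}\|_{l^\infty}\geq 0$, which makes every Born-series term nonnegative and hence gives strict positivity of the semigroup image for \emph{all} $t>0$ without any smallness argument; and the equality-in-the-triangle-inequality sign analysis plus orthogonality to a positive ground state for simplicity. Both proofs ultimately rest on the strict positivity of the discrete heat kernel $K_n(t)>0$ from Proposition \ref{prop:10} and formula \eqref{par:7}. Your version is more self-contained (no appeal to the abstract theorem) and the potential-shifting trick is arguably cleaner than the small-$t$ argument; the paper's version is shorter because Reed--Simon absorbs the simplicity step.

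Two small points. First, in your sign step, the assertion ``all nearest neighbors of any site share the same sign'' is not by itself enough on the bipartite lattice $\cz^d$ (an alternating sign pattern on the two parity classes would survive that conclusion); what saves you is already in your displayed identity: since $\sum_{|j-n|=1}h_j=(2d-V_n-\la_0)h_n$ with $2d-V_n-\la_0>0$, the common sign of the neighbors is the sign of $h_n$ itself, and only then does connectedness force a global sign --- you should say this explicitly. Second, your worry about the essential spectrum is well founded but is a defect of the theorem's statement that the paper shares: for ${\mathbb V}$ merely bounded (e.g.\ constant) the infimum of the quadratic form need not be an eigenvalue at all, so some decay of ${\mathbb V}$ is genuinely needed for Weyl's theorem; in the paper's applications the potential is built from an $l^2$ profile and hence vanishes at infinity, which is what makes the identification $\si_{ess}(\cl)=[0,4d]$ legitimate there.
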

We provide the proof in the Appendix, as it is a bit technical and the methods are  somewhat outside of  the scope of this paper.  Regardless, we would like to provide some comments on it. It is well-known, (Theorem XIII.44, p. 204, \cite{RS}) that the Perron-Frobenius property for a self-adjoint operator $\ch$, which is bounded from below, follows from (and it is in many ways equivalent to) the positivity improving property of the associated semigroup $e^{t \ch}$.    It is worth observing that for the standard continuous Schr\"odinger operators $-\Delta+V$, the positivity improving for the semgroup $e^{t (\Delta+V)}$ (and hence of the Perron-Frobenius) is  a direct consequence of the Feynman-Kac formula, which presents the solution to $u_t = \ch v$ as an expectation, against a positive kernel of the initial data. 
In fact, this can be extended to fractional Schr\"odinger operators $\ch=(-\Delta)^s+V, 0<s<1$, via a similar representation formula, in terms of L{\'e}vy processes (instead of Brownian motion). The same representation formulas, while in principle possible, seem unavailable at the moment, hence our  direct proof in the Appendix.

 \subsection{Discrete Szeg\"o inequality} 
 We start with an intuitively clear combinatorial lemma,  which  is somewhat cumbersome to write down. 
 \begin{lemma}
 	\label{le:10} 
 	Let $a_0\geq a_1\geq \ldots \geq a_N\geq 0=a_{N+1}$, be a sequence of non-negative numbers and 
 	$\mu:\{0, \ldots, N+1\}\to \{0, \ldots, N+1\}$ be a permutation. Then, for every $p>1$, 
 	\begin{equation}
 	\label{404} 
 	 \sum_{j=0}^{N} |a_{\mu(j+1)}- a_{\mu(j)}|^p\geq \sum_{j=0}^{N} |a_{j+1}- a_{j}|^p.
 	\end{equation}
 Assuming in addition that the sequence $\{a_j\}_{j=0}^\infty$ is strictly decreasing, we have that equality in \eqref{404} occurs if and only if $\mu=id$. That is, any non-trivial permutation $\mu$ leads to strict inequality in \eqref{404}. 
 \end{lemma}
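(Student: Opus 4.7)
The plan is to prove \eqref{404} via a layer-cake / coarea decomposition, which recasts the $p$-energy $\sum |c_{j+1}-c_j|^p$ as an integral of a distribution function of consecutive jumps, and then to establish the key pointwise distribution-function comparison through a combinatorial connectivity argument — essentially a one-dimensional discrete isoperimetric estimate.

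First, using the identity $y^p = p\int_0^\infty s^{p-1}\mathbf{1}_{\{s<y\}}\,ds$ valid for $y\geq 0$, I would write, for any non-negative finite sequence $c$,
\[
\sum_{j=0}^N |c_{j+1}-c_j|^p \;=\; p\int_0^\infty s^{p-1}\, N^c(s)\,ds, \qquad N^c(s):=\#\{j: |c_{j+1}-c_j|>s\}.
\]
Applied to both $c=a$ and to the permuted sequence $b_j := a_{\mu(j)}$, this reduces the lemma to the pointwise distribution-function bound $N^b(s) \geq N^a(s)$ for every $s\geq 0$.

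To establish this pointwise bound, fix $s \geq 0$ and let $K=N^a(s)$. The indices where $a_j-a_{j+1}>s$ partition the multiset $\{a_0,\ldots,a_{N+1}\}$ into $K+1$ consecutive value clusters $R_0,\ldots,R_K$ whose ranges are pairwise separated by gaps strictly exceeding $s$. Define a coloring $\chi:\{0,\ldots,N+1\}\to\{0,\ldots,K\}$ by $\chi(i)=k$ iff $b_i\in R_k$. Any edge $(i,i+1)$ with $\chi(i)\neq \chi(i+1)$ then automatically satisfies $|b_{i+1}-b_i|>s$ and is therefore counted by $N^b(s)$. Since $\chi$ is surjective (each $R_k$ is populated by some $b_i$), the multigraph on the vertex set $\{0,\ldots,K\}$ whose edges record the cluster transitions along the walk $\chi(0)\to \chi(1)\to\cdots\to \chi(N+1)$ is connected, and a connected multigraph on $K+1$ vertices has at least $K$ edges. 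Hence $N^b(s)\geq K=N^a(s)$, and integrating against $p s^{p-1}\,ds$ yields \eqref{404}.

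The strict-inequality claim under $\mu\neq \mathrm{id}$ with strictly decreasing $a$ is more delicate: one would track the equality case of the connectivity bound, which forces the cluster-transition multigraph to be a simple path visiting $0,1,\ldots,K$ in order and forbids within-cluster jumps exceeding $s$; any non-trivial permutation violates one of these conditions on a positive-measure set of thresholds $s$. The main obstacle here is genuine — the reversal $\mu(j)=N+1-j$ produces exactly the same multiset of consecutive differences as the identity and therefore saturates \eqref{404}, so the strict statement must implicitly exclude this reflection symmetry. I would read the hypothesis "$\{a_j\}_{j=0}^\infty$ strictly decreasing" as placing the sequence in a one-directional, semi-infinite context in which the reversal is not an admissible permutation, which restores the strict conclusion as stated.
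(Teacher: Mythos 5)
Your proof of the inequality \eqref{404} is correct, and it takes a genuinely different route from the paper's. The paper writes $\ve_j=a_j-a_{j+1}$, expands each permuted difference as a block sum $\ve_{\min(\mu(j),\mu(j+1))}+\cdots+\ve_{\max(\mu(j),\mu(j+1))-1}$, invokes the superadditivity $(x+y)^p\geq x^p+y^p$ for $p\geq 1$, and then checks combinatorially that every gap $\ve_k$ is crossed by at least one adjacent pair $(\mu(j),\mu(j+1))$ (via the same $A/B$ splitting that underlies your connectivity claim). Your layer-cake identity $\sum_j|c_{j+1}-c_j|^p=p\int_0^\infty s^{p-1}N^c(s)\,ds$ instead reduces everything to the pointwise bound $N^b(s)\geq N^a(s)$, obtained because a surjective walk through the $K+1$ clusters must make at least $K$ cross-cluster transitions, each of which jumps by more than $s$. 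Both arguments hinge on closely related crossing facts, but yours buys something: it never uses convexity of $t\mapsto t^p$, only that $s^{p-1}\geq 0$, so it proves the rearrangement inequality for $\sum_j\Phi(|c_{j+1}-c_j|)$ with any nondecreasing $\Phi$, $\Phi(0)=0$ (in particular for $0<p\leq 1$, where the paper's superadditivity step fails). The one place to be careful is the claim that $\chi(i)\neq\chi(i+1)$ forces $|b_{i+1}-b_i|>s$; this is correct precisely because clusters are blocks of consecutive sorted indices separated by gaps exceeding $s$, so any two values in different clusters differ by more than $s$.

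The equality case is where your proposal stops short of a proof, but your diagnosis of the obstruction is sharp and correct: the reversal $\mu(j)=N+1-j$ yields the identical multiset of consecutive differences and saturates \eqref{404}, so the ``if and only if $\mu=\mathrm{id}$'' claim is false as literally stated. Notably, the paper's own proof has the same lacuna --- its last line asserts that $\max(\mu(j+1),\mu(j))-\min(\mu(j+1),\mu(j))=1$ for all $j$ forces $\mu=\mathrm{id}$, when it equally allows the reversal. This is harmless for the application, since Proposition \ref{Szego} only needs monotonicity away from the maximum, which is reflection-invariant; but the honest fix is to state the conclusion as ``$\mu=\mathrm{id}$ or $\mu$ is the reversal,'' rather than your suggestion of reading the (evidently misprinted) hypothesis ``$\{a_j\}_{j=0}^\infty$ strictly decreasing'' as outlawing the reflection. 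Beyond that, your equality argument is a program rather than a proof: to conclude, you would need to show that $N^b(s)=N^a(s)$ for every $s$ (which does follow from equality in \eqref{404}, since $N^b-N^a$ is a nonnegative right-continuous step function), that this forces the transition multigraph at each level to be the ordered path $R_0-R_1-\cdots-R_K$ traversed monotonically with no within-cluster jumps exceeding $s$, and that assembling these constraints over all $s$ pins down $\mu$ up to reflection. None of these steps is carried out. By contrast, the paper's equality argument (each $\ve_k>0$ appears at least once, and any block of length $\geq 2$ produces a strict superadditivity surplus) is short and complete up to the reversal issue.
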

 \begin{proof}
 Introduce $\ve_j=a_j-a_{j+1}\geq 0, j=0, \ldots, N$, so that $a_j=\ve_j+\ldots +\ve_N$.  We need to show 
 \begin{equation}
 \label{50} 
  \sum_{j=0}^{N} |\ve_{\min(\mu(j+1), \mu(j))} +\ldots+ \ve_{\max(\mu(j+1), \mu(j))-1}|^p\geq \sum_{j=0}^{N} |\ve_j|^p.
 \end{equation}
 Since 
 $$
 |\ve_{\min(\mu(j+1), \mu(j))} +\ldots+ \ve_{\max(\mu(j+1), \mu(j))-1}|^p\geq \ve_{\min(\mu(j+1), \mu(j))}^p +\ldots+ \ve_{\max(\mu(j+1), \mu(j))-1}^p,
 $$
 matters clearly reduce to showing the following statement: 
 for every $k\in \{0, \ldots, N\}$, there exists $j=j_k \in \{0, \ldots, N+1\}$, so that $k\in [\min(\mu(j+1), \mu(j)), \max(\mu(j+1), \mu(j))-1]$. 
 
 We now prove this.
 For $k=0$, choose  $j_0: \mu(j_0)=0$, if $j_0\leq N$ or else, if $\mu(N+1)=0$,  choose $j_0=N$. This  does the job in the case $k=0$. For $k=N$, choose $j_0: \mu(j_0)=N+1$ and this would do it. 
 
Consider now the case $k: 1\leq k\leq N-1$. Introduce  the non-empty disjoint sets 
$A, B: \mu(A)=\{0, \ldots, k\}, \mu(B)=\{k+1, \ldots, N+1\}$. Note that  $\{0, \ldots, N+1\}= A\cup B$. 
If $k\in A$, that is $\mu(k)\leq k$, take $j\in B$, so that $\mu(j)\geq k+1$. Then, it is clear that for some $j_0\in [\min(k,j), \max(k,j)]$, $k\in [\min(\mu(j_0-1), \mu(j_0)), \max(\mu(j_0-1), \mu(j_0))-1]$, so we are done in this case.  Similarly, if $k\in B$, so $\mu(k)\geq k+1$, choose $j\in A$, so that $\mu(j)\leq k$. Again, for some $j_0\in [\min(k,j), \max(k,j)]$, 
$k\in [\min(\mu(j_0-1), \mu(j_0)), \max(\mu(j_0-1), \mu(j_0))-1]$, so we are done again.    

Regarding the equality in \eqref{404}, assuming strictly decreasing sequence (and hence $\ve_j>0, j=0, \ldots, N$), we saw that each $\ve_j$ appears in the sum \eqref{50}. We also saw in the analysis, that if $ \max(\mu(j+1), \mu(j))-\min(\mu(j+1), \mu(j))\geq 2$, for any $j$, then at least one of $\ve_j^p, j=0, \ldots, N$ will appear at least twice leading to strict inequality. Thus, equality in \eqref{404} is possible only if $\max(\mu(j+1), \mu(j))-\min(\mu(j+1), \mu(j))=1$. This is of course possible, only for $\mu=id$. Conversely, there is clearly equality in \eqref{404} for $\mu=id$, so equality in \eqref{404} occurs if and only if $\mu=id$.  
 \end{proof}
 To set up the function spaces, assume $\{f_n\}_{n\in\cz}\in l^p(\cz), 1<p<\infty$. Our next task is to study the minimal possible value of expressions of the form 
 $$
T({\mathbf f})=\sum_{n=-\infty}^\infty |f_{n+1}-f_n|^p, 
 $$
 where we allow ourselves only to permute and translate\footnote{We say that $\{g_n\}$ is a translate of $\{f_n\}$ if there exists $k_0$, so that $g_n=f_{n+k_0}$ for all $n\in\cz$}  the sequence $\{f_n\}$. The reverse triangle inequality $|f_{n+1}-f_n|\geq | |f_{n+1}|-|f_n||$, which is strict if $f_n f_{n+1}<0$ shows that the quantity $T$ is minimized on non-negative sequences. In addition,  it is clear that to minimize $T({\mathbf f})$  effectively, we need to discard redundancies. 
 
 More precisely, we say that   that two sequences $\{f_n\}$ and $\{h_n\}$ are one step equivalent, only if 
 ${\mathbf f} =(\ldots f_{j-1}, f_{j}, f_{j+1},\ldots  )$, $f_{j}=f_{j-1}\neq 0$ and 
 ${\mathbf h}=(\ldots , f_{j-1},  f_{j+1},\ldots)$. That is, ${\mathbf
   f}, {\mathbf h}$ are one step equivalent, if one is obtained from
 the other by erasing one copy of a non-zero  element that repeats
 itself. Note that the erasure operation only allows one to erase a
 repeating element, which is equal to an immediate neighbor.   We say
 that ${\mathbf f}, {\mathbf h}$,   are equivalent, if one is obtained
 from the other by the erasure operation, possibly countably many
 times.  Clearly, for each sequence ${\mathbf f}\in l^p$, there is a
 uniquely determined equivalent element, which has all different values - that is $f_j\neq f_k$ for each $j\neq k$. 
 \begin{proposition}(Discrete Szeg\"o inequality)
 	\label{Szego} 
 	
 	Let $\{f_n\}_{n\in\cz}\in l^p(\cz), 1<p<\infty$. Then, there exists an element $\tilde{{\mathbf f}}\in l^p$, so that 
 	\begin{enumerate}
 		\item $\tilde{{\mathbf f}}$ is a permutation and translation of $\{|f_n|\}$. 
 		\item $\tilde{{\mathbf f}}(0)=\max_{n\in \cz} \tilde{{\mathbf f}}(n)$
 		\item $\tilde{{\mathbf f}}:{\mathbb N}_+\to \rone_+$ is non-increasing, while $\tilde{{\mathbf f}}:{\mathbb N}_-\to \rone_+$ is non-decreasing. 
 	\end{enumerate}
 	for which, we have 
 	\begin{equation}
 	\label{602} 
 		\sum_{n=-\infty}^\infty |f_{n+1}-f_n|^p\geq \sum_{n=-\infty}^\infty |\tilde{f}_{n+1}-\tilde{f}_n|^p,
 	\end{equation}
 	Equality in \eqref{602} holds if and only if $f_n$ does not change sign and for some $j_0\in\cz$,
 	$$
 |f_{j_0}|\geq |f_{j_0-1}|\geq \ldots  	|f_{j_0-n}|\ldots;   \ \  |f_{j_0}|\geq |f_{j_0+1}|\geq |f_{j_0+2} |\geq \ldots |f_{j_0+n}|\geq \ldots 
 	$$
 \end{proposition}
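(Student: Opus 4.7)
The plan is to reduce first to non-negative sequences via the reverse triangle inequality, and then to split the sum at the location of the maximum and invoke the one-sided Lemma \ref{le:10} on each half. From $|f_{n+1}-f_n|\geq \bigl||f_{n+1}|-|f_n|\bigr|$ (with strict inequality whenever $f_n f_{n+1}<0$), summed in $n$, one obtains $\sum_n|f_{n+1}-f_n|^p\geq\sum_n\bigl||f_{n+1}|-|f_n|\bigr|^p$, which reduces matters to the case $f_n\geq 0$ and shows that any sign-change contributes strictly. Since $f\in l^p$ forces $f_n\to 0$, the supremum $M:=\sup_n f_n$ is attained at some $m\in\cz$ (assuming $f\not\equiv 0$), and we split
\[
\sum_{n\in\cz}|f_{n+1}-f_n|^p=\sum_{n\geq m}|f_{n+1}-f_n|^p+\sum_{n\leq m-1}|f_{n+1}-f_n|^p.
\]

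For the right half, set $g_k:=f_{m+k}$ for $k\geq 0$, a non-negative sequence with $g_0=M$ and $g_k\to 0$, and let $b_0\geq b_1\geq b_2\geq\cdots$ be its decreasing rearrangement (so $b_0=M$). To invoke Lemma \ref{le:10}, I would truncate to $k\in\{0,\ldots,N\}$, append a $0$, and view $(g_0,\ldots,g_N,0)$ as a permutation $\mu$ of the sorted sequence $(b_0^{(N)},\ldots,b_{N+1}^{(N)})$, whose last entry is indeed $0$. The lemma then yields
\[
\sum_{k=0}^{N}|g_{k+1}-g_k|^p\geq\sum_{k=0}^{N}\bigl(b_k^{(N)}-b_{k+1}^{(N)}\bigr)^p,\qquad g_{N+1}:=0,
\]
and monotone convergence as $N\to\infty$ gives $\sum_{n\geq m}|f_{n+1}-f_n|^p\geq\sum_{k\geq 0}(b_k-b_{k+1})^p$. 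A symmetric argument applied to the left-half sequence $h_k:=f_{m-k}$ with its sorted rearrangement $c_0\geq c_1\geq\cdots$ (and $c_0=M$) yields $\sum_{n\leq m-1}|f_{n+1}-f_n|^p\geq\sum_{k\geq 0}(c_k-c_{k+1})^p$.

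Defining $\tilde f(0):=M$, $\tilde f(j):=b_j$ for $j\geq 1$, and $\tilde f(-j):=c_j$ for $j\geq 1$, the multiset $\{b_j\}_{j\geq 0}\cup\{c_j\}_{j\geq 1}$ is exactly the reordering of $\{|f_n|\}_{n\in\cz}$, so $\tilde f$ is a permutation and translation of $\{|f_n|\}$, attains its maximum at $0$, and is non-increasing on $\mathbb{N}_+$ and non-decreasing on $\mathbb{N}_-$; this verifies properties (1)--(3). Adding the two half-inequalities gives
\[
\sum_n|f_{n+1}-f_n|^p\geq\sum_{k\geq 0}(b_k-b_{k+1})^p+\sum_{k\geq 0}(c_k-c_{k+1})^p=\sum_n|\tilde f_{n+1}-\tilde f_n|^p,
\]
which is \eqref{602}. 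For the equality case, the reverse-triangle step is sharp only when $f$ does not change sign, and the equality clause of Lemma \ref{le:10} forces the one-sided sequences $\{g_k\}$ and $\{h_k\}$ to coincide with their sorted rearrangements, i.e., $f$ is weakly monotone on each side of $j_0=m$, as claimed.

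The main obstacle I anticipate is the equality analysis when the sorted rearrangements contain plateaus, since Lemma \ref{le:10}'s equality clause is stated only under strict decrease; the remedy is to invoke the one-step equivalence introduced before the proposition to collapse runs of repeated non-zero values, reducing to the strict case, and then to restore the plateaus after applying the equality clause. Secondary care is needed in the $N\to\infty$ limit (uniform $l^p$ tail estimates justify exchanging limit and sum) and in the non-uniqueness of $m$ when the maximum is attained at several indices, where any such $m$ works by translation invariance.
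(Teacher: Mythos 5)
Your proposal is correct and follows essentially the same route as the paper's proof: reduce to non-negative sequences via the reverse triangle inequality, split/translate at the maximum, apply the combinatorial rearrangement Lemma \ref{le:10} to each truncated half, pass to the limit $N\to\infty$, and glue the two one-sided decreasing rearrangements into $\tilde{\mathbf f}$. Your treatment of the equality case (collapsing plateaus via the one-step equivalence before invoking the strict-decrease clause of the lemma) is in fact somewhat more careful than the paper's.
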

 \begin{proof}
 	As we have argued above, 
 	$$
 	\sum_{n=-\infty}^\infty |f_{n+1}-f_n|^p\geq \sum_{n=-\infty}^\infty ||f_{n+1}|-|f_n||^p,
 	$$
 	with strict inequality, if there is a change of sign $f_{n+1} f_n<0$. So, we might reduce to the case $f_n\geq 0$. Also, since ${\mathbf f}\in l^p$, there is a maximal element, which we assume, by translational invariance to be at $n=0$. That is, $f_n(0)=\max_{n\in \cz} f(n)$. Next, it suffices to consider such positive sequences with compact support, say on $[0, N]$, and $f_n(0)=\max_{n\in \cz} f(n)$, for which we need to show 
 	\begin{equation}
 	\label{70} 
 		\sum_{n=0}^N |f_{n}-f_{n+1}|^p\geq \sum_{n=0}^N |f^*_{n}-f^*_{n+1}|^p. 
 	\end{equation}
 	where $\{f_n^*\}$ is the decreasing rearrangement of $f_n, 0\leq n\leq N$. For the proof of \eqref{70}, we simply refer to Lemma \ref{le:10}. Indeed, we start with $a_n=f_n^*, n=0, \ldots, N, a_{N+1}=0$ and a permutation $\mu: \{0,\ldots, N+1\}\to \{0,\ldots, N+1\}$, defined so that 
 	$f_n=f^*_{\mu(n)}$. 
 	
 	 Based on \eqref{70}, we can also prove 
 	\begin{equation}
 	\label{75} 
 		\sum_{n=-N}^{-1} |f_{n}-f_{n+1}|^p\geq \sum_{n=-N}^{-1}  |f^*_{n}-f^*_{n+1}|^p.
 	\end{equation}
 	After combining the two and taking the  limit $N\to \infty$, we obtain \eqref{602}, where $\tilde{{\mathbf f}}$ is the decreasing rearrangement of ${\mathbf f}$. 
 	
 	The equality occurs, if it occurs for both \eqref{70} and \eqref{75}. Applying the criteria for equality in Lemma \ref{le:10} implies that one must have, say for the case $f_n\geq 0$ and $f_0=\max f(n)$, that $f_0>f_1\ldots f_n \ldots$ and $f_0>f_{-1}\ldots $. Clearly, in such a case, equality in \eqref{602} holds true. 

 \end{proof}

 \section{Construction of the normalized waves}
 We start with some basic properties of the variational problem \eqref{30}.
 \subsection{Properties of the variational problem \eqref{30}}
 The following  estimate    is instrumental to  the well-posedness of the variational problem \eqref{30}. In fact, this is a version of a  discrete Sobolev embedding. 
 \begin{lemma}
 	\label{le:43} 
 	Let $d\geq 1$ and $2<p\leq p_d=\left\{\begin{array}{cc} 
 	+\infty & d=1,2 \\
 	\f{2d}{d-2} & d\geq 3
 	\end{array}  \right.$. 
 
 	Then,  for every $0<\theta<\min(1, d (\f{1}{2}-\f{1}{p}))$, there exists $C_{\theta, p}$,  so that for every sequence $\ubb\in l^2(\cz^d)$
 	\begin{equation}
 	\label{60} 
 	\left(\sum_{n\in \cz^d} |u_n|^p\right)^{1/p}  \leq C_\ve \left(\sum_{j\in \cz^d: |j-n|=1} |u_j-u_n|^2\right)^{\f{\theta}{2}} 
 	\left(\sum_{n\in \cz^d} |u_n|^2\right)^{\f{1}{2}-\f{\theta}{2}}
 	\end{equation}
 	For $p>p_d$, \eqref{60}holds with $\theta=1$. 
 \end{lemma}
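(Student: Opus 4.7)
The plan is to pass to the Fourier side via the isometry $\cf: l^2(\cz^d) \to L^2([0,1]^d)$ introduced in Section~3. The key ingredients are: Plancherel's identity $\|u\|_{l^2} = \|\hat{u}\|_{L^2}$; the Fourier representation
\begin{equation*}
\dpr{-\De_{disc} u}{u} = \int_{[0,1]^d} m(\xi)\, |\hat{u}(\xi)|^2\, d\xi, \qquad m(\xi) := 4\sum_{k=1}^{d} \sin^2(\pi \xi_k),
\end{equation*}
which follows from the one-dimensional computation in Section~3 applied coordinate-wise; and the Hausdorff-Young inequality $\|u\|_{l^p} \leq \|\hat{u}\|_{L^{p'}([0,1]^d)}$ for $p \geq 2$. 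Note that $m$ is globally bounded above by $4d$, vanishes (modulo periodicity) only at $\xi = 0$, and satisfies $m(\xi) \asymp |\xi|^2$ there.

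For $2 < p \leq p_d$, I would apply H\"older's inequality to the pointwise factorization
\begin{equation*}
|\hat{u}(\xi)|^{p'} \;=\; |\hat{u}(\xi)|^{p'-2a} \cdot \bigl(m(\xi)\, |\hat{u}(\xi)|^2\bigr)^{a} \cdot m(\xi)^{-a}
\end{equation*}
with the H\"older triple $(q_1, q_2, q_3) = (2/(p'-2a),\, 1/a,\, 2/(2-p'))$, whose reciprocals sum to one. The first factor integrates to $\|u\|_{l^2}^{p'-2a}$, the second to $\dpr{-\De_{disc} u}{u}^{\,a}$, and the third yields a finite constant iff $\int_{[0,1]^d} m(\xi)^{-2a/(2-p')}\, d\xi < \infty$. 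Because $m \asymp |\xi|^2$ near its unique zero on the fundamental domain, this integrability reduces to $2a/(2-p') < d/2$, i.e.\ $a < d(2-p')/4$. Setting $\theta := 2a/p'$, this constraint becomes $\theta < d(1/p' - 1/2) = d(1/2 - 1/p)$, while the requirement $q_1 > 0$ (so that $p' - 2a > 0$) becomes $\theta < 1$. Taking $p'$-th roots and applying Hausdorff-Young produces the desired inequality throughout $0 < \theta < \min(1,\, d(1/2 - 1/p))$.

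For $p > p_d$ (available only when $d \geq 3$), I would use the boundary choice $a = p'/2$, i.e.\ $\theta = 1$. The first factor disappears and the remaining two-factor H\"older imposes $p'/(2-p') < d/2$, which rearranges to $p > 2d/(d-2) = p_d$ — precisely our assumption.

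The main obstacle is essentially bookkeeping: selecting the H\"older triple so that two of its factors match the natural quantities $\|u\|_{l^2}^2$ and $\dpr{-\De_{disc} u}{u}$, and then verifying that integrability of the residual factor $m^{-2a/(2-p')}$ is controlled by the behavior of $m$ at the origin. There is no deeper analytic difficulty, since this is the standard Fourier proof of Gagliardo-Nirenberg transplanted to $\cz^d$; the discreteness manifests only as the global upper bound $m \leq 4d$, which is precisely what forces the additional constraint $\theta < 1$ in low dimensions.
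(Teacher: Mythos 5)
Your proposal is correct and follows essentially the same route as the paper: Hausdorff--Young to pass to $\|\hat u\|_{L^{p'}}$, then H\"older against powers of the symbol $m(\xi)=4\sum_k\sin^2(\pi\xi_k)$, with the constraint $\theta<d(\tfrac12-\tfrac1p)$ coming from the integrability of $m^{-\gamma}$ for $\gamma<d/2$ near its zeros and $\theta<1$ from the boundedness of $m$. The only (cosmetic) difference is that you package the argument as a single three-factor H\"older, whereas the paper performs two successive two-factor H\"older steps; the exponent bookkeeping and the resulting ranges of $\theta$ are identical.
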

 {\bf Remark:} Note that since $\sum_{j\in \cz^d: |j-n|=1} |u_j-u_n|^2\leq C_d \sum_n |u_n|^2$, the estimates with higher $\theta$ imply the ones with smaller $\theta$. 
 \begin{proof}
 	Per our earlier computations, see Section \ref{sec:2.1}, we have that 
 	$$
 	\sum_{j\in \cz^d: |j-n|=1} |u_j-u_n|^2=\dpr{-\De_{disc} {\mathbf u}}{{\mathbf u}}=4 \int_{[0,1]^d} |{\mathbf u}(\xi)|^2 
 	\left(\sum_{m=1}^d \sin^2(\pi \xi_m) \right)d\xi
 	$$
 	Denoting the symbol $\De(\xi):=\sum_{m=1}^d \sin^2(\pi \xi_m)$, we see that \eqref{60} is better expressed in the following form 
 	\begin{equation}
 	\label{65} 
 	\left(\sum_{n\in \cz^d} |u_n|^p\right)^{1/p}  \leq C_\ve \left(\int_{[0,1]^d} |{\mathbf u}(\xi)|^2 
 	\De(\xi) d\xi\right)^{\f{\theta}{2}} 
 	\left(\int_{[0,1]^d} |{\mathbf u}(\xi)|^2 d\xi
 	 \right)^{\f{1}{2}-\f{\theta}{2}},
 	\end{equation}
 	which we now prove. Starting with the Hausdorff-Young's inequality, we obtain 
 	$$
 	\left(\sum_{n\in \cz^d} |u_n|^p\right)^{1/p}\leq C \|{\mathbf u}\|_{L^{p'}([0,1]^d)}.
 	$$
 	Next, observe that since 
 	$$
 	\De(\xi)\geq c\min(|\xi|^2,
        \sum_{m=1}^d |\xi_m-1|^2),
        $$
        we have that 
 	$$
 	\int_{[0,1]^d} \f{1}{\De(\xi)^\ga} d\xi<C_\ga, 
 	$$
  for all $\ga<\f{d}{2}$. Thus, we can estimate by H\"older's as follows 
  $$
  \|{\mathbf u}\|_{L^{p'}([0,1]^d)}\leq  \left(\int_{[0,1]^d} |{\mathbf u}(\xi)|^2 \De(\xi)^{\f{2\al}{p'}}\right)^{\f{1}{2}} \left(\int_{[0,1]^d} \f{1}{\De(\xi)^{ \al q} } d\xi\right),
  $$
 	where $q: \f{1}{q}+\f{p'}{2}=1, \al>0$. Note that we select $\al: \al q<\f{d}{2}$, so that the second integral is finite. If we can select $\al: \f{2\al}{p'}\geq 1$ (noting that $\De(\xi)<d$), then we have shown \eqref{60} with $\theta=1$. This happens in the case $p>p_d$. 
 	
 	In the case $p: 2<p\leq p_d$, the inequality $\al<\f{d}{2q}$ implies $ \f{2\al}{p'}<1$. Then,  again by H\"older's, 
 $$
 		\left(\int_{[0,1]^d} |{\mathbf u}(\xi)|^2 \De(\xi)^{\f{2\al}{p'}}\right)^{\f{1}{2}}\leq \left(\int_{[0,1]^d} |{\mathbf u}(\xi)|^2 \De(\xi) d\xi \right)^{\f{\al}{p'}} \left(\int_{[0,1]^d} |{\mathbf u}(\xi)|^2 d\xi  \right)^{\f{1}{2}-\f{\al}{p'}}. 
 $$
 Putting all the inequalities together yields for $\theta=\f{2\al}{p'}<1$, 
 	\begin{equation}
 	\label{703} 
 \left(\sum_{n\in \cz^d} |u_n|^p\right)^{1/p}\leq C_\theta \left(\int_{[0,1]^d} |{\mathbf u}(\xi)|^2 \De(\xi) d\xi \right)^{\f{\theta}{2}} \left(\int_{[0,1]^d} |{\mathbf u}(\xi)|^2 d\xi  \right)^{\f{1}{2}-\f{\theta}{2}}. 
\end{equation}
 	As $\al$ can assume all values $\al<\f{d}{2q}=\f{d}{2}(1-\f{p'}{2})$, then $\theta$ can be taken in \eqref{703} to be $\theta<d (\f{1}{2}-\f{1}{p})$.  
 \end{proof}
 Based on Lemma \ref{le:43}, it can be shown that  the variational problem \eqref{30} has solutions, for all values of $\la>0$, if $\si<\f{2}{d}$. On the other hand, it can be shown that for $\si\geq \f{2}{d}$, solutions exist only for large enough values of $\la>0$. This has been shown by Weinstein, see Propositions 4.1 and 4.2, \cite{Wein}. His proof is based on the availability of the estimate \eqref{60}, in the case $p=2\si+2$, with $\theta: (2\si+2)\f{1-\theta}{2}<\sigma$, see the question (4.1) in \cite{Wein}.  This amounts to exactly $\si<\f{2}{d}$. 
 
 In addition, Weinstein, \cite{Wein} has shown that for $\si>\f{2}{d}$, the problem \eqref{30} is not well-posed, in the sense that for small enough $\la>0$, $\inf\limits_{\sum_n |u_n|^2 =\la} H[\ubb]=-\infty$. 
 
 Our next lemma concerns $h(\la)$. 
 \begin{lemma}
 	\label{le:45} 
 	Let $d\geq 1$, $\si<\f{2}{d}$ and $\la>0$. Then, $h(\la):=\inf\limits_{\sum_n |u_n|^2 =\la} H[\ubb]<0$. 
 \end{lemma}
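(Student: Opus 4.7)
The strategy is a standard concentration/scaling argument: I will construct an explicit one-parameter family of admissible test sequences, parametrized by a large scale $R$, and show that for $\sigma<2/d$ the negative nonlinear part of $H$ dominates the positive quadratic part at scale $R\to\infty$.

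First, fix any nonzero $\psi\in C_c^\infty(\mathbf{R}^d)$ with $\|\psi\|_{L^2(\mathbf{R}^d)}=1$, and for $R\gg 1$ set
\[
v_n^{(R)} := R^{-d/2}\,\psi(n/R),\qquad n\in\cz^d,
\]
so that Riemann-sum comparisons give, as $R\to\infty$,
\[
\sum_n |v_n^{(R)}|^2 = 1 + o(1),\qquad
\sum_{|j-n|=1}|v_j^{(R)}-v_n^{(R)}|^2 = R^{-2}\Bigl(2\int_{\mathbf{R}^d}|\nabla\psi|^2\,dx + o(1)\Bigr),
\]
\[
\sum_n |v_n^{(R)}|^{2\si+2} = R^{-\si d}\Bigl(\int_{\mathbf{R}^d}|\psi|^{2\si+2}\,dx + o(1)\Bigr).
\]
The first two follow from the mean-value theorem applied to $\psi$ (since the nearest-neighbor differences are of size $R^{-1}\partial_i\psi(n/R)+O(R^{-2})$), and the third is immediate from the pointwise identity $|v_n^{(R)}|^{2\si+2}=R^{-d(\si+1)}|\psi(n/R)|^{2\si+2}$.

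Next, I normalize: let $\alpha_R>0$ be the unique constant making $\mathbf{u}^{(R)}:=\alpha_R\mathbf{v}^{(R)}$ satisfy $\|\mathbf{u}^{(R)}\|_{l^2}^2=\la$, so $\alpha_R^2=\la/(1+o(1))\to\la$. Substituting into $H$ and using the three asymptotics above,
\[
H[\mathbf{u}^{(R)}] = \alpha_R^2\sum_{|j-n|=1}|v_j^{(R)}-v_n^{(R)}|^2 - \frac{\alpha_R^{2\si+2}}{\si+1}\sum_n|v_n^{(R)}|^{2\si+2}
= A_\psi\,\la\,R^{-2} - B_\psi\,\la^{\si+1}R^{-\si d} + o(R^{-2}) + o(R^{-\si d}),
\]
with positive constants $A_\psi,B_\psi$ depending only on $\psi$.

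Finally, under the assumption $\si<2/d$ we have $\si d<2$, hence $R^{-\si d}$ dominates $R^{-2}$ as $R\to\infty$. Consequently $H[\mathbf{u}^{(R)}]\to -\infty$ term-by-term comparison gives $H[\mathbf{u}^{(R)}]<0$ for all sufficiently large $R$. Since $\mathbf{u}^{(R)}$ is admissible for the constrained problem \eqref{30} (it lies on the sphere $\|\mathbf{u}\|_{l^2}^2=\la$), this yields
\[
h(\la)=\inf_{\|\mathbf{u}\|_{l^2}^2=\la} H[\mathbf{u}] \leq H[\mathbf{u}^{(R)}]<0,
\]
as claimed. The only mild technical point is verifying the Riemann-sum asymptotics with the discrete gradient; this is routine provided $\psi$ is smooth and compactly supported, so the main conceptual step is simply identifying the correct scaling exponents $R^{-2}$ vs.\ $R^{-\si d}$ and exploiting the subcritical hypothesis $\si<2/d$.
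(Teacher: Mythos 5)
Your proof is correct and follows essentially the same route as the paper's: both construct a wide, slowly varying test sequence at scale $R$ (the paper uses an explicit piecewise-linear tent of width $N$ rather than a scaled smooth bump) and exploit that the kinetic term scales like $R^{-2}$ while the nonlinear term scales like $R^{-\si d}$, so the latter dominates when $\si d<2$. One small slip: $H[\mathbf{u}^{(R)}]$ does not tend to $-\infty$ (both terms tend to $0$); the correct and sufficient conclusion, which your asymptotics do give, is that $H[\mathbf{u}^{(R)}]<0$ for all sufficiently large $R$.
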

 {\bf Remark:} This has already been shown by Weinstein, \cite{Wein},  but we offer alternative proof. 
 \begin{proof}
 	We need to produce an example, for which $\sum_n |u_n|^2 =\la$, but $H[\ubb]<0$. For large $N>>1$, consider 
 	$$
 	u_{k_1, \ldots, k_d}:=\left\{ \begin{array}{cc} 
 	 c_{d, \la,N} \left[\f{1}{N^{\f{d}{2}}}  - \f{|k_1|+\ldots + |k_d|}{N^{1+\f{d}{2}}}\right] &  |k_1|+|k_2|+\ldots+|k_d|\leq N-1 \\
 	 0 & |k_1|+|k_2|+\ldots+|k_d|\geq N.
  \end{array}
 	\right.
 	$$
 	where $c_{d,\la,N}$ so that $\|\ubb\|_{L^2}^2=\la$. Note that for 
 	$|u_{k_1, \ldots, k_d}|\leq const.  N^{-d/2}$, while $|u_{k_1,
          \ldots, k_d}|\sim N^{-d/2}$ for $|k_1|+\ldots |k_d|<N/2$,
        we have that 
 	$$
 	\sum_{k_1, \ldots, k_d} |u_{k_1, \ldots, k_d}|^2\sim c_{d, \la,N}^2, 
 	$$
 	whence $c_{d,\la, N}\sim c_{\la, d}$. That is, it is bounded above and below by constants independent of $N$. 
 	
 	Next, for $|k_1|+|k_2|+\ldots+|k_d|\leq N-2$, we have that for all $j: |j-k|=1$, 
 	$|u_{k}-u_{j}|=c_{d,\la,N} N^{-1-d/2}$, while for the boundary $|k_1|+|k_2|+\ldots+|k_d|=N-1$ and $|k_1|+|k_2|+\ldots+|k_d|=N$, we also have that $|u_{k}-u_{j}|\leq c_{d,\la,N} N^{-1-d/2}$ for all $j: |j-k|=1$. Thus, 
 	$$
 	\sum_{j\in \cz^d: |j-n|=1} |u_j-u_n|^2 \leq c_{d,\la,N} N^d N^{-2-d}\leq c_{d,\la} N^{-2}
 	$$
 	Also, it is clear 
 	$$
 	\sum_{n\in\cz^d} |u_n|^{2\si+2} \sim N^d N^{-\f{d}{2}(2\si+2)}\sim N^{-d\si}.
 	$$
 	All in all, we conclude 
 	$$
 	H[\ubb]\leq C_{\la,d} N^{-2} - \tilde{C}_{\la, d} N^{-d\si}
 	$$
 	Clearly, if $\si<\f{2}{d}$, $H[\ubb]$ can be made negative, if $N=N_{\la,d}$ is selected large enough. 
 \end{proof}
 
 \begin{figure}[t]
  \includegraphics[scale=0.45]{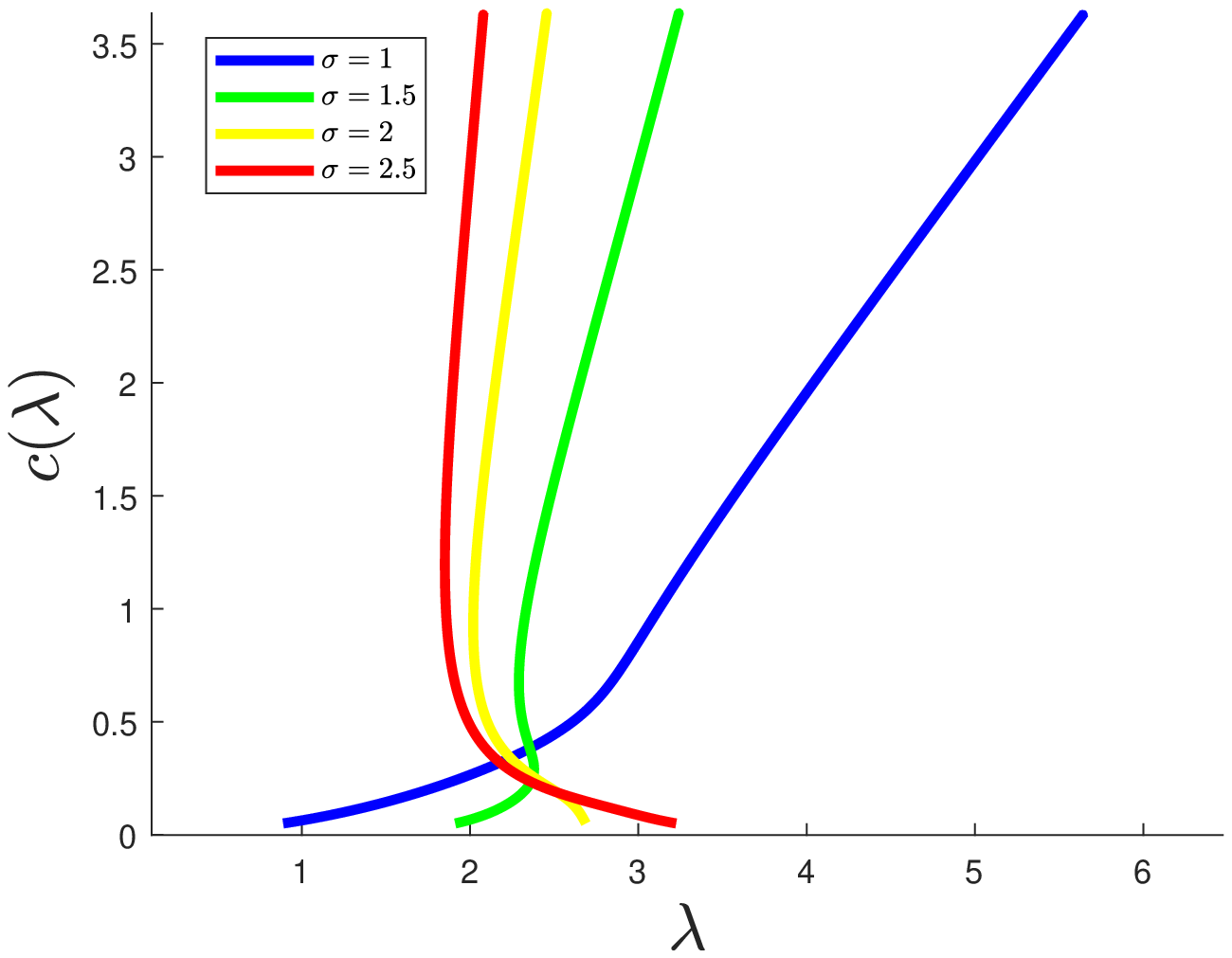}
   \includegraphics[scale=0.45]{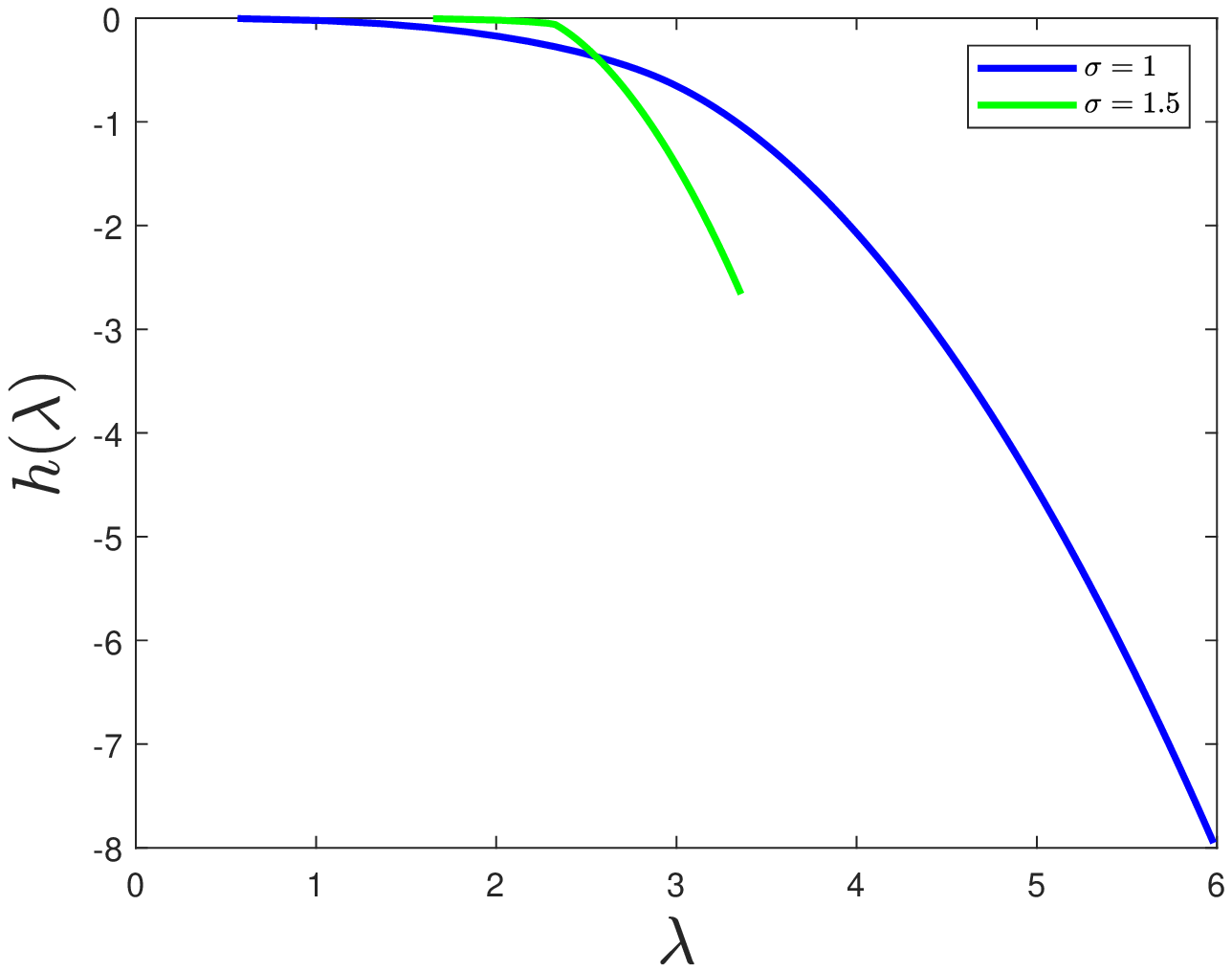}
  \caption{The functions $c(\la)$ and $h(\la)$ in dimension $d=1$.}
  \label{fig-chlm}
  \end{figure}
  
 The next lemma shows some further useful  properties of $h$, in
 particular the sublinearity of $h$, the concavity of $h$ and the
 Lipschitz properties of $h$.
 
 \begin{lemma}
 	\label{le:subad} 
 	The function $h:\rone_+\to \rone$ is locally Lipschitz continuous and concave. 
 	
 	In addition, it is sub-linear. That is for 
  $\la>0$ and $\al\in (0, \la)$, we have 
 	$$
 	h(\la)<h(\al)+h(\la-\al).
 	$$
 \end{lemma}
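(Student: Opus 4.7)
The plan is to leverage the scaling structure of $H$. Setting $\ubb = \sqrt{\la}\,\vb$ with $\|\vb\|_{l^2}^2 = 1$ trivializes the constraint and rewrites
$$
h(\la) = \inf_{\|\vb\|_{l^2}=1} F_\vb(\la), \qquad F_\vb(\la) := \la K(\vb) - \f{\la^{\si+1}}{\si+1} V(\vb),
$$
where $K(\vb) = \sum_{|j-n|=1}|v_j-v_n|^2$ and $V(\vb) = \|\vb\|_{l^{2\si+2}}^{2\si+2}$. Since $\si > 0$, the map $\la \mapsto -\la^{\si+1}$ is concave on $(0,\infty)$, so each $F_\vb$ is concave and $h$ is concave as a pointwise infimum of concave functions. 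For finiteness: the upper bound $h(\la) < 0$ is Lemma \ref{le:45}, while Lemma \ref{le:43} (applied with any allowable $\theta < 1/(\si+1)$, compatible with $\si < 2/d$) combined with Young's inequality yields $V(\ubb)/(\si+1) \leq \tfrac{1}{2} K(\ubb) + C \la^\mu$, hence $h(\la) \geq -C\la^\mu > -\infty$. A finite concave function on an open interval is automatically locally Lipschitz, so the first two assertions follow.

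For the strict sub-linearity, my plan is to first establish the intermediate claim that $\la \mapsto h(\la)/\la$ is strictly decreasing. Fix $0 < \la_1 < \la_2$ and, for small $\ve > 0$, pick a near-minimizer $\ubb_\ve$ at level $\la_1$ with $H[\ubb_\ve] < h(\la_1) + \ve$. The crucial observation is that the negativity $h(\la_1) < 0$ forces a uniform lower bound $V(\ubb_\ve) \geq V_0 := (\si+1)|h(\la_1)|/2 > 0$ for all $\ve$ small, since $K(\ubb_\ve) \geq 0$ and $H[\ubb_\ve] < h(\la_1)/2 < 0$. Rescaling by $c = \sqrt{\la_2/\la_1} > 1$ produces a test element at level $\la_2$ satisfying
$$
H[c\ubb_\ve] = \f{\la_2}{\la_1}H[\ubb_\ve] - \f{\la_2}{\la_1}\cdot\f{V(\ubb_\ve)}{\si+1}\left[\left(\f{\la_2}{\la_1}\right)^{\si} - 1\right],
$$
where the subtracted term is strictly positive and uniformly bounded below in $\ve$ thanks to $V(\ubb_\ve) \geq V_0$. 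Sending $\ve \to 0$ and dividing by $\la_2$ yields $h(\la_2)/\la_2 \leq h(\la_1)/\la_1 - \eta(\la_1,\la_2)$ with $\eta > 0$.

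The strict sub-linearity then falls out immediately. For $\al \in (0,\la)$, strict monotonicity of $\mu \mapsto h(\mu)/\mu$ gives both $h(\al)/\al > h(\la)/\la$ and $h(\la-\al)/(\la-\al) > h(\la)/\la$; multiplying by the positive quantities $\al$ and $\la-\al$ respectively (which preserves strict inequality even though $h$ is negative) and adding produces
$$
h(\al) + h(\la-\al) > \f{\al + (\la-\al)}{\la} h(\la) = h(\la),
$$
exactly as desired. The hard part of this program is securing the uniform positivity $V(\ubb_\ve) \geq V_0$ along near-minimizing sequences; this is precisely where the negativity of $h$ from Lemma \ref{le:45} is essential. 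Without it the scaling argument would collapse to non-strict monotonicity of $h(\la)/\la$, and only the weak inequality $h(\al) + h(\la-\al) \geq h(\la)$ would follow.
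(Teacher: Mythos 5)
Your proof is correct, and its core mechanism is the same as the paper's: concavity follows because $h$ is a pointwise infimum of functions concave in $\la$, and strict sub-linearity is reduced to the strict decrease of $\la\mapsto h(\la)/\la$, which both you and the paper obtain from the rescaling $\ubb\mapsto c\,\ubb$ together with the negativity $h<0$ from Lemma \ref{le:45}. The two places where your route diverges are minor but worth noting. For the Lipschitz property, the paper estimates $|\tilde H_{\la+\de}[\ubb]-\tilde H_\la[\ubb]|$ directly and uniformly over the unit sphere, whereas you invoke the standard fact that a finite concave function on an open interval is locally Lipschitz; your finiteness argument via Lemma \ref{le:43} and Young's inequality works but is more than needed, since $\|\ubb\|_{l^{2\si+2}}\le\|\ubb\|_{l^2}$ already gives $h(\la)\ge -\la^{\si+1}/(\si+1)$ trivially. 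For strictness, the paper argues qualitatively that no minimizing sequence can have $\|\ubb^m\|_{l^{2\si+2}}\to 0$ (else $h(\la)\ge 0$), while you extract the quantitative lower bound $V(\ubb_\ve)\ge(\si+1)|h(\la_1)|/2$ and hence an explicit gap $\eta>0$; these are the same observation, yours slightly sharper. Finally, your symmetric application of the monotonicity of $h(\mu)/\mu$ to both $\al$ and $\la-\al$ is a small simplification of the paper's case split on whether $\al\ge\la/2$.
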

 \begin{proof}
 	For the Lipschitz property, we write 
 	$$
 	h(\la) = \la \inf\limits_{\sum_n |u_n|^2 =1} \left[\sum_{j\in \cz^d: |j-n|=1} |u_j-u_n|^2 - \f{\la^\si}{\si+1} \sum_{n\in\cz^d} |u_n|^{2\si+2}\right]=: \la \inf\limits_{\|\ubb\| =1} \tilde{H}_\la [\ubb].
 	$$
 	It clearly suffices to show that $\tilde{h}:=\inf\limits_{\|\ubb\| =1} 
 	\tilde{H}_\la [\ubb]$ is Lipschitz. We have 
 	$$
 	|	\tilde{H}_{\la+\de} [\ubb] - 	\tilde{H}_\la [\ubb]|\leq \f{1}{\si+1} ((\la+\de)^{\si}-\la^{\si}) \sum_{n\in\cz^d} |u_n|^{2\si+2}\leq C_\la \de, 
 	$$
 	for all $\ubb: \|\ubb\|_{l^2}=1$. It follows that for all $\ubb: \|\ubb\|_{l^2}=1$, there is 
 	$$
 		\tilde{H}_\la [\ubb]-C_\la \de\leq 	\tilde{H}_{\la+\de} [\ubb]\leq 	\tilde{H}_\la [\ubb]+C_\la \de
 	$$
 Taking $\inf_{\|\ubb\|_{l^2}=1}$ implies
 $|\tilde{h}(\la+\de)-\tilde{h}(\la)|\leq C_\la \de$, whence the
 Lipschitz property of $h$ follows. 
 
 For the concavity, recall the formula 
 \begin{eqnarray*}
 h(\la)=\inf\limits_{\sum_n |u_n|^2 =1}\left[ \la \sum_{j\in \cz^d: |j-n|=1} |u_j-u_n|^2 - \f{\la^{\si+1}}{\si+1} \sum_{n\in\cz^d} |u_n|^{2\si+2}\right].
 \end{eqnarray*}
 	For $a\in (0,1)$, $\la_1, \la_2>0$, we have due to the convexity of the function $\la\to \la^{\si+1}$, 
 	 \begin{eqnarray*}
 & & 	(a\la_1+(1-a)\la_2)  \sum_{j\in \cz^d: |j-n|=1} |u_j-u_n|^2 - 
 	\f{(a\la_1+(1-a) \la_2)^{\si+1}}{\si+1} \sum_{n\in\cz^d} |u_n|^{2\si+2}\geq \\
 	&\geq & a \left[\la_1 \sum_{j\in \cz^d: |j-n|=1} |u_j-u_n|^2-
 	\f{\la_1^{\si+1}}{\si+1} \sum_{n\in\cz^d} |u_n|^{2\si+2}\right]+\\
 	&+& (1-a) \left[\la_2 \sum_{j\in \cz^d: |j-n|=1} |u_j-u_n|^2-
 	\f{\la_2^{\si+1}}{\si+1} \sum_{n\in\cz^d} |u_n|^{2\si+2}\right].
 	 \end{eqnarray*}
 	Taking infimum on $\ubb: \|\ubb\|_{l^2}=1$, we obtain the inequality 
 	$$
 	h(a\la_1+(1-a)\la_2)\geq a h(\la_1)+(1-a) h(\la_2),
 	$$
 	 which is the desired concavity. 
 	
 Finally, regarding the sub-additivity, observe that 
 	\begin{eqnarray*} 
 	h(\la) &=& \inf\limits_{\sum_n |u_n|^2 =\la} \left[\sum_{j\in \cz^d: |j-n|=1} |u_j-u_n|^2 - \f{1}{\si+1} \sum_{n\in\cz^d} |u_n|^{2\si+2} \right]= \\
 	&=& \f{\la}{\al} \inf\limits_{\sum_n |u_n|^2 =\al} \left[\sum_{j\in \cz^d: |j-n|=1} |u_j-u_n|^2 - \f{(\la/\al)^\si}{\si+1} \sum_{n\in\cz^d} |u_n|^{2\si+2}\right]<
 	\f{\la}{\al} h(\al),
 	\end{eqnarray*}
 	where the last inequality is strict, because $\al<\la$ and there is a minimizing sequence $\ubb^m$ for \eqref{30}, with the property 
 	$\limsup_m\|\ubb^m\|_{l^{2\si+2}}>0$. Indeed, otherwise, if there is a minimizing sequence $\ubb^m: \lim_m \|\ubb^m\|_{l^{2\si+2}}=0$, then, we would have had 
 	$$
 	h(\la)=\lim_m   \sum_{j\in \cz^d: |j-n|=1} |u^m_j-u^m_n|^2 \geq 0,
 	$$
 	 in contradiction  with Lemma \ref{le:45}. 
 	 
 	 Now that we have established $h(\la)<	\f{\la}{\al} h(\al)$, note that this means that $\la\to \f{h(\la)}{\la}$ is strictly decreasing\footnote{This conclusion can also be easily reached for differentiable functions $h$, which are concave, as was established earlier. Unfortunately, the differentiability of $h$ is only valid a.e., and we prefer to give the direct argument instead.}. Assume that $\al\in [\f{\la}{2}, \la)$ (and otherwise, work with $\la-\al$ instead of $\al$). We get 
 	 $$
 	 h(\la)<	\f{\la}{\al} h(\al)=h(\al)+ \f{\la-\al}{\al} h(\al)\leq h(\al)+h(\la-\al),
 	 $$
 	 where in the last inequality, we have used that since $\la-\al\leq \f{\la}{2}\leq \al$, one has $\f{h(\al)}{\al} \leq \f{h(\la-\al)}{\la-\al}$. 
 \end{proof}
 We are now ready for the existence result. 
 \subsection{Conclusion of the proof of Theorem \ref{theo:5}}
 We show that 
 	if  $d\geq 1$, $0<\si<\f{2}{d}$ and $\la>0$, then the variational problem \eqref{30} has a solution as described in Theorem \ref{theo:5}. 
 
 	The proof consists of applying the method of compensated compactness. Indeed, take a minimizing sequence $\{{\mathbf u^k}\}$ for \eqref{30}. That is, $\|{\mathbf u^k}\|_{l^2}^2=\la$ and $H[{\mathbf u^k}]\to h(\la)$. For every sequence with the property, $\|{\mathbf u^k}\|_{l^2}^2=\la$, standard arguments show that one of three alternatives takes place: 
 \begin{enumerate}
 	\item (Tightness) There exists a sequence $n_k\in \cz^d$, so that for any $\epsilon>0$, there exists $R=R(\epsilon)$, so that 
 	$$
 	\sum_{n\in \cz^d: |n-n_k|<R_\epsilon} |u_n^k|^2>\la-\epsilon. 
 	$$
 	\item (Vanishing) For every $R>0$,
 	$$
 	\lim_{k\to \infty} \sup_{y\in \cz^d} \sum_{n\in \cz^d: |n-y|<R} |u_n^k|^2=0.
 	$$
 	\item (Splitting)  There exists $\al\in (0, \la)$, so that for any $\epsilon>0$, there is $R$ and $R_k\to \infty$ and $n_k$, so that 
 	$$
 	\left|\sum_{n\in \cz^d: |n-n_k|<R} |u_n^k|^2-\al\right|\leq \epsilon, 
 	\left|\sum_{n\in \cz^d: |n-n_k|>R_k} |u_n^k|^2-(\la-\al)\right|\leq \epsilon
 	$$
 \end{enumerate}
 	We will show that the minimizing sequence introduced above, cannot be anything but tight. Indeed, assume vanishing. In particular, this means that $\lim_k \|{\mathbf u^k}\|_{l^\infty}=0$. Then, 
 	$$
 	\sum_{n} |u_n^k|^{2\si+2}\leq \|{\mathbf u^k}\|_{l^\infty}^{2\si} \|{\mathbf u^k}\|_{l^2}^2,
 	$$
 	whence $\lim_k \|{\mathbf u^k}\|_{l^{2\si+2}}=0$. Then, 
 	$$
 	h(\la)=\lim_k H[{\mathbf u^k}]=\lim_k \sum_{j\in \cz^d: |j-n|=1} |u^k_j-u^k_n|^2\geq 0,
 	$$
 	a contradiction with Lemma \ref{le:45}. 
 	
 	Next, assume splitting. Denote 
 	$$
 	{\mathbf v^k}={\mathbf u^k}\chi_{|n-n_k|<R}, {\mathbf w^k}={\mathbf u^k}\chi_{|n-n_k|>R_k}, {\mathbf z^k}={\mathbf u^k}\chi_{R<|n-n_k|<R_k}
 	$$
 	Clearly, we have that $\|{\mathbf v^k}\|_{l^2}^2\in (\al-\epsilon, \al+\epsilon), 
 	\|{\mathbf w^k}\|_{l^2}^2 \in (\la-\al-\epsilon, \la-\al+\epsilon)$, while $\|{\mathbf z^k}\|_{l^2}^2\leq \epsilon$. Also, by the definition of the functional $H$, there exists an absolute  constant $C>0$, so that 
 	$$
 	H[{\mathbf u^k}]=H[{\mathbf v^k}]+H[{\mathbf w^k}]+O(\epsilon)\geq h(\al+\epsilon)+h(\la-\al+\epsilon)-C\epsilon. 
 	$$
 	Taking limits in $k\to \infty$ and using the fact that $\la\to h(\la)$ is decreasing, 
 	we obtain the inequality 
 	$$
 	h(\la)\geq h(\al+\epsilon)+h(\la-\al+\epsilon)-C\epsilon,
 	$$
 	which is true for arbitrary $\epsilon>0$. By the continuity of $h$, it follows that 
 	$h(\la)\geq h(\al)+h(\la-\al)$, which is in contradiction with Lemma \ref{le:subad}. Thus, it remains that ${\mathbf u^k}$ is tight, whence, it has a convergent subsequence, which is clearly a solution to \eqref{30}. 
 	 The inequalities \eqref{ineq1}, \eqref{ineq2} follow by  an application of the discrete Szeg\"o inequality Proposition \ref{Szego} for each fixed \\  $(k_1, \ldots, k_{j_0-1}, k_{j_0+1}, \ldots, k_d)$. 
 	
 	The Euler-Lagrange equation \eqref{EL:10} is derived in the usual fashion. Namely, starting with a constrained minimizer $\ubb^0=(u_n)_{n\in\cz^d}:\|\ubb^0\|_{l^2}^2=\la$, we consider a perturbation $\ubb^0+\ve \ubq, \ubq\in l^2$ and then the scalar function 
 	\begin{eqnarray*}
 	f(\ve) &:=& H\left[\sqrt{\la}\f{\ubb^0+\ve \ubq}{\|\ubb^0+\ve \ubq\|} \right] = \\
 	&=&  \f{\la}{\|\ubb^0+\ve \ubq\|^2}  \dpr{-\De_{disc}(\ubb^0+\ve \ubq) }{\ubb^0+\ve \ubq} - \f{1}{\si+1} \left(\f{\la}{\|\ubb^0+\ve \ubq\|^2} \right)^{\si+1} \sum_{n\in\cz^d} |u_n+\ve q_n|^{2\si+2}.
 	\end{eqnarray*}
 	
 	Clearly $f(0)=h(\la)=H[\ubb^0]=\inf_{\|\ubb\|^2=\la} H[\ubb]$, whence $ 0$ is a local minimum for the function $f$ and so $f'(0)=0$. Writing the expansion in powers of $\ve$ and isolating the coefficient in front of $\ve$ (which must be zero), brings about the equation 
 	\begin{equation}
 	\label{105} 
 		\sum_n (-\De_{disc} \ubb_n+c(\la) u_n -  u_n^{2\si+1})q_n=0, 
 	\end{equation} 
 	where  $\la c(\la) =\sum_{n\in \cz^d} |u_n|^{2\si+2} -  \sum_{j\in \cz^d: |j-n|=1} |u_j-u_n|^2$.  Noting that \eqref{105} must be valid for arbitrary $q$, we conclude that \eqref{EL:10} holds. Regarding the sign of $c(\la)$, note that since $h(\la)=\sum_{j\in \cz^d: |j-n|=1} |u_j-u_n|^2 - \f{1}{\si+1} \sum_{n\in \cz^d} |u_n|^{2\si+2}<0$, it follows that 
 	$$
 	\la c(\la)=\f{\si}{\si+1} \sum_{n\in \cz^d} |u_n|^{2\si+2}-h(\la)>0.
 	$$
 	For the second variation claim, consider again the function $f$, with $\ubq\perp \ubb^0, \|\ubq\|_{l^2}=1$. That is $\dpr{\ubb^0}{\ubq}=0$. In such case,  $\|\ubb^0+\ve \ubq\|^2=\|\ubb^0\|^2+\ve^2=\la+\ve^2$. We take advantage of the fact that $0$ is a minimum for $f$.  It now implies $f''(0)\geq 0$. The coefficient in front of $\ve^2$ in the Taylor expansion of $f(\ve)$ around zero is 
 	$$
 	\dpr{-\De_{disc} \ubq}{\ubq}+c(\la) \dpr{\ubq}{\ubq} - (2\si+1) \sum_n u_n^{2\si} |q_n|^2.
 	$$
 	 Hence, we conclude that for every $\ubq: \|\ubq\|=1, \ubq\perp \ubb^0$, one has 
 	 $$
 	 \dpr{\cl_+\ubq}{\ubq}=	\dpr{-\De_{disc} \ubq}{\ubq}+c(\la) \dpr{\ubq}{\ubq} - (2\si+1) \sum_n u_n^{2\si} |q_n|^2>0
 	 $$
 	 This means that $\cl_+|_{\{\ubb^0\}^\perp}\geq 0$. 
 	 
 	 \subsection{Proof of Theorem  \ref{cor:17}}
 	 We have already established that ( see Theorem \ref{theo:5}), that $n(\cl_+)=1$, and in fact $\cl_+|_{\{\ubb\}^\perp}\geq 0$. The next task to is to discuss the spectral picture for the other linearized operator $\cl_-$. We show that $\cl_-\geq 0$, with a simple eigenvalue at zero, with $Ker(\cl_-)=span[\ubb]$. 
 	 
 	 To this end,  and by a direct verification shows that $\cl_-[\ubb]=0$, this is simply the profile equation \eqref{EL:10}. By construction, $\ubb\geq 0$. Note that $\cl_-c(\la)$ satisfies the requirements of 
 	  Theorem \ref{prop:PF}, so we know that the lowest eigenvalue
          of $\cl_-$ is a simple eigenvalue, with
          an  eigenfunction with positive entries. We claim that this eigenvalue is zero. Indeed, zero is an eigenvalue, and assume that it is not the smallest one. Then, there is a negative eigenvalue for $\cl_-$. But then, according to Theorem \ref{prop:PF}, this eigenvalue will have an eigenfunction (ground state), with non-negative entries. But this causes a contradiction as the ground state needs to be orthogonal to $\ubb$, as eigenfunctions corresponding to different eigenvalues. This shows that zero is at the bottom of the spectrum for $\cl_-$, so $\cl_-\geq 0$, and there is $\de>0$, so that $\cl_-|_{\{\ubb\}^\perp}\geq \de$. 

  Now that we know that $n(\cl_+)=1, n(\cl_-)=0$,
  the general instability index theory, as developed in \cite{KKS1, KKS2, Kap, Pel},  stability will be established as follows.  First, we shall    verify that 
 	  $\ubb \perp Ker(\cl_+)$ and then, as the element $\cl_+^{-1} \ubb$ is defined uniquely in $Ker(\cl_+)^\perp$, 
 	  the following quantity  $\dpr{\cl_+^{-1} \ubb}{\ubb}$ is uniquely defined as well. According to the instability index theory, \cite{KKS1, KKS2, Kap, Pel}, spectral stability is a consequence of $\dpr{\cl_+^{-1} \ubb}{\ubb}<0$. 
 	  
 	  To this end, we show that $\ubb\perp Ker(\cl_+)$. Take an element $\psi\in Ker(\cl_+), \|\psi\|=1$. Let 
 	  $$
 	  \eta:=\psi- \|\ubb\|^{-2} \dpr{\psi}{\ubb} \ubb\perp \ubb.
 	  $$
 	 As $\cl_+|_{\{\ubb\}^\perp}\geq 0$. it must be that 
 	 \begin{equation}
 	 	\label{par:32} 
 	 	0\leq \dpr{\cl_+ \eta}{\eta} = \|\ubb\|^{-4}  \dpr{\psi}{\ubb}^2 \dpr{\cl_+ \ubb}{\ubb}.
 	 \end{equation}
 	 Recall however that $\dpr{\cl_+ \ubb}{\ubb}=-2\si \sum_{n} |u_n|^{2\si+2}<0$, whence \eqref{par:32} is contradictory, unless $\dpr{\psi}{\ubb}=0$. So, we have established that $\ubb\perp Ker(\cl_+)$. As we have argued already, it follows that  $\cl_+^{-1} \ubb$ is defined uniquely in $Ker(\cl_+)^\perp$. Introduce 
 	 $$
 	 {\mathbf h}:=\cl_+^{-1} \ubb - \|\ubb\|^{-2} \dpr{ \cl_+^{-1} \ubb}{\ubb} \ubb\perp \ubb 
 	 $$

 	 Again, it must be that $\dpr{\cl_+ {\mathbf h}}{{\mathbf h}}\geq 0$. But, by simple computation, 
 	 $$
 	 0\leq \dpr{\cl_+ {\mathbf h}}{{\mathbf h}} = -\dpr{\cl_+^{-1} \ubb}{\ubb}+ \|\ubb\|^{-4} \dpr{ \cl_+^{-1} \ubb}{\ubb}^2 \dpr{\cl_+ \ubb}{\ubb}\leq -\dpr{\cl_+^{-1} \ubb}{\ubb}, 
 	 $$
 	 since as established already $\dpr{\cl_+ \ubb}{\ubb}<0$. It follows that $\dpr{\cl_+^{-1} \ubb}{\ubb}\leq 0$. 
 	 Under the extra assumption $\dpr{\cl_+^{-1} \ubb}{\ubb}\neq 0$, it follows that $\dpr{\cl_+^{-1} \ubb}{\ubb}<0$, and the spectral stability follows. 
 %	 \newpage 

 	 \section{Existence and stability properties of the minimizers of homogeneous functionals}
 We first provide the proof of Theorem \ref{theo:10}.
 \subsection{Proof of Theorem \ref{theo:10}}
 We apply the compensation compactness arguments again, but this time
 for  sequences in the $l^{2\si+2}$ norms. As before, pick a
 minimizing sequence for \eqref{40}, $\|\ubb^n\|_{l^{2\si+2}}=1$,
 $J[\ubb^n]\to j(\om)$. We have three alternatives

  \begin{enumerate}
  	\item (Tightness) There exists a sequence $n_k\in \cz^d$, so that for any $\epsilon>0$, there exists $R=R(\epsilon)$, so that 
  	$$
  	\sum_{n\in \cz^d: |n-n_k|<R_\epsilon} |u_n^k|^{2\si+2}>1-\epsilon. 
  	$$
  	\item (Vanishing) For every $R>0$,
  	$$
  	\lim_{k\to \infty} \sup_{y\in \cz^d} \sum_{n\in \cz^d: |n-y|<R} |u_n^k|^{2\si+2}=0.
  	$$
  	\item (Splitting)  There exists $\al\in (0, \la)$, so that for any $\epsilon>0$, there is $R$ and $R_k\to \infty$ and $n_k$, so that 
  	$$
  	\left|\sum_{n\in \cz^d: |n-n_k|<R} |u_n^k|^{2\si+2}-\al\right|\leq \epsilon, 
  	\left|\sum_{n\in \cz^d: |n-n_k|>R_k} |u_n^k|^{2\si+2}-(1-\al)\right|\leq \epsilon
  	$$
  \end{enumerate}
 The vanishing is handled similarly as before. More concretely, the vanishing implies \\ $\lim_k \|\ubb^k\|_{l^\infty}\to 0$. But then,  since $J[\ubb^k]\geq \om \|\ubb^k\|_{l^2}^2$, we have that $\sup_k \|\ubb^k\|_{l^2}<\infty$, whence 
 $$
 1=\|\ubb^k\|_{l^{2\si+2}}^{2\si+2}\leq \|\ubb^k\|_{l^\infty}^{2\si} \|\ubb^k\|_{l^2}^2
 $$
  is contradictory for large $k$. 
  
  For splitting, we  take again 
  $$
  {\mathbf v^k}={\mathbf u^k}\chi_{|n-n_k|<R}, {\mathbf w^k}={\mathbf u^k}\chi_{|n-n_k|>R_k}, {\mathbf z^k}={\mathbf u^k}\chi_{R<|n-n_k|<R_k}
  $$
  so that 
  $$
  |\|{\mathbf v^k}\|^{2\si+2}-\al|\leq \epsilon, |\|{\mathbf w^k}\|^{2\si+2}-(1-\al)|\leq \epsilon, |\|{\mathbf z^k}\|^{2\si+2}-(1-\al)|\leq \epsilon
  $$
 and we have the estimate 
 $
 J[\ubb^k]\geq J[{\mathbf v^k}]+ J[{\mathbf w^k}]-C \epsilon.
 $
 Observe that a simple rescaling argument shows that for all $\al>0$, we have 
 $$
 j_\al (\om):=\inf\limits_{\|{\mathbf v}\|_{l^{2\si+2}}^{2\si+2}=\al} J[{\mathbf v}]= \al^{\f{1}{1+\si}} \inf\limits_{\|{\mathbf v}\|_{l^{2\si+2}}^{2\si+2}=1} J[{\mathbf v}]=\al^{\f{1}{1+\si}} j(\om).
 $$
 Thus, $J[{\mathbf v^k}]\geq (\al-\epsilon)^{\f{1}{1+\si}} j(\om), J[{\mathbf w^k}]\geq (1-\al-\epsilon)^{\f{1}{1+\si}} j(\om)$. It follows that 
 $$
 J[\ubb^k]\geq  \left((\al-\epsilon)^{\f{1}{1+\si}} +(1-\al-\epsilon)^{\f{1}{1+\si}} \right)  j(\om)-C\epsilon. 
 $$
 Taking limits $\epsilon\to 0$, and then $k\to \infty$, we conclude 
 $$
 j(\om)\geq \left(\al^{\f{1}{1+\si}} +(1-\al)^{\f{1}{1+\si}} \right)  j(\om)>j(\om),
 $$
 which is also a contradiction. Thus, it remains that the sequence is tight, which implies that there is a convergent, in $l^{2\si+2}$ subsequence. We take it to be $\ubb^k$. Therefore there is $\ubb: \|\ubb\|_{l^{2\si+2}}=1$, $\lim_k \|\ubb^k-\ubb\|_{l^{2\si+2}}=0$. Since $\sup_k \|\ubb^k\|_{l^2}<\infty$, it follows that $\ubb^k\rightharpoonup \ubb$. It follows that $\{u_{n+e_j}^k-u_n^k\}_{n\in \cz^d}\rightharpoonup  \{u_{n+e_j}-u_n\}_{n\in\cz^d}$ for every $j\in \{1, \ldots, d\}$. All in all, by the lower semi-continuity of the $l^2$ norms with respect to weak convergence, 
 $$
 j(\om)=\liminf_k J[\ubb^k]\geq J[\ubb].
 $$
 On the other hand, $J[\ubb]\geq j(\om)$, whence $J[\ubb]=\lim_k J[\ubb^k]=j(\om)$ and $\ubb$ is a minimizer for \eqref{40}. Note that we have in fact proved more - since $\ubb^k\rightharpoonup \ubb$ and $\|\ubb^k\|_{l^2}\to \|\ubb\|_{l^2}$, it follows from the parallelogram identity  that in fact the stronger convergence $\lim_k \|\ubb^k-\ubb\|_{l^2}=0$ holds. In essence, this means that for all minimizing sequences for \eqref{40}, we can always find an $l^2$ (and hence $l^{2\si+2}$) convergent subsequence, which converges to a minimizer.  
 
 {Finally, the derivation of the Euler-Lagrange equation \eqref{EL:20} and the property  $\dpr{\cl_+ {\mathbf f}}{{\mathbf f}}\geq 0$ for all ${\mathbf f}: \sum_{n\in\cz^d} f_n u_n^{2\si+1}=0$, are proved in exactly the same fashion, as in the corresponding constrained problem for the normalized waves.  Specifically, consider
  $$
  g(\epsilon):= J\left[\f{{\mathbf u}^0+\epsilon{\mathbf f}}{\|{\mathbf u}^0+\epsilon{\mathbf f}\|_{l^{2\si+2}}} \right]
  $$
 The critical point condition $g'(0)=0$, which is necessary for the minimizer, implies the Euler-Lagrange equation \eqref{EL:20}.  Taking the increments ${\mathbf f}: \sum_{n\in\cz^d} f_n u_n^{2\si+1}=0$, we express the necessary condition for the minimizer, namely 
 $$
 g''(0)=\f{\dpr{\cl_+ {\mathbf f}}{{\mathbf f}}}{2}\geq 0
 $$
 which implies that $\dpr{\cl_+ {\mathbf f}}{{\mathbf f}}\geq 0$ for all ${\mathbf f}: \sum_{n\in\cz^d} f_n u_n^{2\si+1}=0$. Thus, $n(\cl_+)\leq 1$, as the operator is non-negative on a co-dimension 
 one subspace, namely $\{{\mathbf f}: {\mathbf f}\perp \{u_n^{2\si+1}\}_n\}^\perp$.  } 
 On the other hand, we still have $\dpr{\cl_+ \ubb}{\ubb}=-2\si \sum_n u_n^{2\si+1}<0$, whence $n(\cl_+)\geq 1$. Altogether, $n(\cl_+)=1$. 
 
 \subsection{Proof of Theorem \ref{theo:15}} 
  We start with a technical lemma regarding the function $j(\om)$. 
  \begin{lemma}
  	\label{le:jw} 
  	The function $j(\om)$ satisfies the bounds $\om<j(\om)<\om+2$. Moreover, it is uniformly Lipschitz in the intervals $(a, \infty), a>0$.  That is,  for each $a>0$, there is $C_a$, so that for all $a<\om_1<\om_2$, there is $|j(\om_1)-j(\om_2)|\leq C_a |\om_1-\om_2|$. 
  \end{lemma}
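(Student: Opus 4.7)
The plan is to establish the two-sided pointwise bound on $j(\om)$ first, and then derive the local Lipschitz continuity from the variational characterization combined with a uniform $l^2$ bound on the minimizers.

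First I would prove the lower bound. Any admissible sequence $\ubb$ with $\|\ubb\|_{l^{2\si+2}}=1$ satisfies the lattice nesting inequality $\|\ubb\|_{l^2}\geq \|\ubb\|_{l^{2\si+2}}=1$, which is valid with constant one on $\cz^d$ since $2<2\si+2$. Combined with the nonnegativity of the kinetic term in $J$, this yields $J[\ubb]\geq \om\|\ubb\|_{l^2}^2\geq \om$. To upgrade to strict inequality, I would apply this bound to the actual minimizer $\ubb_\om^*$ guaranteed by Theorem \ref{theo:10}: if its kinetic energy vanished then $\ubb_\om^*$ would be constant on $\cz^d$, which is incompatible with membership in $l^{2\si+2}$.

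For the upper bound, I would test against a function concentrated on two adjacent lattice sites. Fixing a nearest-neighbor pair $0\sim n^*$ in $\cz^d$, consider $\ubb_\de := (e_0+\de e_{n^*})/\|e_0+\de e_{n^*}\|_{l^{2\si+2}}$ for small $\de>0$. A brief Taylor expansion using $\|e_0+\de e_{n^*}\|_{l^{2\si+2}}^{2\si+2}=1+\de^{2\si+2}$, together with the fact that among the bonds incident to the support only the $(0,n^*)$ bond contributes $(1-\de)^2$ while the other $2d-1$ bonds at each endpoint contribute $1$ or $\de^2$ respectively, gives $J[\ubb_\de]=\om+2d-2\de+O(\de^2)$; hence $j(\om)<\om+2d$ for $\de$ sufficiently small (which recovers the stated bound $\om+2$ in one dimension).

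For the Lipschitz estimate on $(a,\infty)$, I would apply the standard variational trick: given $a<\om_1<\om_2$ with minimizers $\ubb_i:=\ubb_{\om_i}^*$, testing each problem against the other minimizer yields
$$
j(\om_1)\leq J_{\om_1}[\ubb_2]=j(\om_2)+(\om_1-\om_2)\|\ubb_2\|_{l^2}^2,
$$
and symmetrically, so $|j(\om_1)-j(\om_2)|\leq |\om_1-\om_2|\max_i \|\ubb_i\|_{l^2}^2$. The final ingredient is a uniform $l^2$ bound on the minimizers: from $\om\|\ubb_\om^*\|_{l^2}^2\leq j(\om)\leq \om+C_d$ one extracts $\|\ubb_\om^*\|_{l^2}^2\leq 1+C_d/a=:C_a$ for all $\om>a$, producing the desired uniform Lipschitz constant. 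The main technical point will be verifying the explicit upper-bound constant (the $2$ in $\om+2$, corresponding to $2d$ in higher dimensions via the two-site test function above); any estimate of the form $j(\om)\leq\om+O(1)$ is sufficient for the uniform Lipschitz conclusion.
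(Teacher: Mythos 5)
Your proof is correct and follows essentially the same route as the paper: the norm nesting $\|\ubb\|_{l^2}\ge\|\ubb\|_{l^{2\si+2}}=1$ for the lower bound, a compactly supported test element for the upper bound, and the resulting a priori bound $\|\ubb\|_{l^2}^2\le 1+C/a$ on the relevant competitors to produce the uniform Lipschitz constant on $(a,\infty)$. The only differences are cosmetic: you derive the Lipschitz estimate by testing each problem at the other's minimizer rather than by restricting the competitor class to $\{\|\ubb\|_{l^2}^2<1+2/\om\}$ as the paper does, and your two-site test function justifies the strictness of $j(\om)<\om+2d$ more carefully than the paper's bare test with $e_0$.
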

  {\bf Remark:} From the proof, one can take $C_a=1+\f{2}{a}$, which clearly blows up as $a\to 0+$. 
 \begin{proof}
 	Testing with the element $e_0$ yields the bound $j(\om)<J[e_0]=\om+2$. On the other hand, since 
 	$\|\ubb\|_{l^2}\geq \|\ubb\|_{l^{2\si+2}}=1$, we have that $J[\ubb]>\om$ for each $\ubb: \|\ubb\|_{l^{2\si+2}}=1$, whence $j(\om)>\om$. 
 	
 	The bound $j(\om)<\om+2$ implies that 
 	$$
 	j(\om)=\inf\limits_{\|\ubb\|_{l^{2\si+2}}=1} J[u]= \inf\limits_{\|\ubb\|_{l^{2\si+2}}=1, \|\ubb\|_{l^2}^2<1+\f{2}{\om}} J[u].
 	$$
 	Indeed, testing with $\ubb: \|\ubb\|_{l^2} >1+\f{2}{\om}$ leads to $J[u]>\om+2$, 
 	which will not be optimal. Having this in mind, let $0<a<\om_1<\om_2$. In order to test for both values $\om_1, \om_2$, it suffices to take $\ubb: \|\ubb\|_{l^{2\si+2}}=1, \|\ubb\|_{l^2}^2<1+\f{2}{\om_1}$. Since, 
 	$$
 	J_{\om_1}[\ubb]-J_{\om_2}[\ubb]=(\om_1-\om_2) \|\ubb\|_{l^2}^2,
 	$$
 	it follows that $|j(\om_1)-j(\om_2)|\leq \left(1+\f{2}{\om_1}\right)|\om_1-\om_2| \leq \left(1+\f{2}{a}\right)|\om_1-\om_2|$. 
 \end{proof}
 Having Lemma \ref{le:jw} at hand, we can finish off the proof of Theorem \ref{theo:15}. Fix $\om>0$. 
 Recall that the limit waves $\lim_j \|\ubb_{\om+\de_j}-\ubb_\om\|_{l^2}=0$.  By the continuity of $j(\om)$, we have that $\vp_n=j(\om)^{\f{1}{2\si}} u_n$ also satisfies    $\lim_j \|\vp_{\om+\de_j}-\vp_\om\|_{l^2}=0$.  Write 
 $$
 \vp_{\om+\de_j}=\vp_\om+\de_j z^j, 
 $$
 so that $\lim_j |\de_j| \|z^j\|_{l^2}=0$. Next, subtract  the profile equations 
 \begin{eqnarray*}
& &  -\De_{disc}(\vp_{\om}+\de_j z^j)+(\om+\de_j) (\vp_{\om}+\de_j z^j) - (\vp_{\om}+\de_j z^j)^{2\si+1}=0, \\
 & &  -\De_{disc}\vp_{\om}+\om \vp_{\om}  - \vp_{\om}^{2\si+1}=0. 
 \end{eqnarray*}
 and divide by $\de_j$. We obtain 
 \begin{equation}
 \label{310} 
 -\De_{disc}z^j_n +\om z^j_n - (2\si+1) \vp_n^{2\si} z^j_n=-\vp_\om - \de_j z_j +E_n^j, 
 \end{equation}
  where 
  $$
  E_n^j=\de_j^{-1}\left((\vp_{\om}+\de_j z^j)^{2\si+1} - \vp_\om^{2\si+1} - (2\si+1) \vp_n^{2\si} \de_j z^j_n\right)
  $$
  By Taylor expansions (separately in the cases $\vp_n<|\de_j| |z^j_n|$ and otherwise), it is clear that there exists a constant $C=C_\si$, so that 
  $$
  |E_n^j|\leq C_\si \vp_n^\si (|\de_j| |z^j_n|)^{\si} |z^j_n|.
  $$
  Thus, we can interpret the equation \eqref{310} in the form $\cl_+[{\mathbf z^j}]=-\vp_\om - 
  \de_j {\mathbf z_j} +{\mathbf E^j}$ This is a non-linear relation for ${\mathbf z^j}$, which we use for {\it a posteriori} estimates.  Indeed, resolving \eqref{310} leads to 
  \begin{equation}
  \label{330} 
   {\mathbf z^j}=-\cl_+^{-1}[\vp_\om]-\cl_+^{-1}(- \de_j {\mathbf z_j} +{\mathbf E^j}),
  \end{equation}
  which leads to the estimate
  $$
  \|{\mathbf z^j}\|_{l^2}\leq C \|\vp_\om\|_{l^2}+ C |\de_j| \|{\mathbf z_j}\|_{l^2}+ C \|{\mathbf E^j}\|_{l^2}\leq 
  C \|\vp_\om\|_{l^2} + C |\de_j| \|{\mathbf z_j}\|_{l^2}+ C \|{\mathbf z^j}\|_{l^2} (|\de_j| \|{\mathbf z^j}\|_{l^\infty})^\si
  $$
  Since $|\de_j| \|{\mathbf z^j}\|_{l^\infty}\leq |\de_j| \|{\mathbf z^j}\|_{l^2}\to 0$, we can hide the term $\|{\mathbf z^j}\|_{l^2} (|\de_j| \|{\mathbf z^j}\|_{l^\infty})^\si$ behind the left-hand side. This implies the estimate 
  $$
  \limsup_j \|{\mathbf z^j}\|_{l^2}\leq C \|\vp_\om\|_{l^2}. 
  $$
  Going back to the relation \eqref{330}, this yields that ${\mathbf z^j}$ converges strongly in $l^2$ to $-\cl_+^{-1}[\vp_\om]$. That is
  $$
  \lim_j \|{\mathbf z^j}+\cl_+^{-1}[\vp_\om]\|_{l^2}=0.
  $$
  Now, consider 
  \begin{eqnarray*}
  \lim_j \f{\|\vp_{\om+\de_j}\|_{l^2}^2- \|\vp_{\om}\|_{l^2}^2}{\de_j} &=&  \lim_j \f{\|\vp_{\om}+\de_j {\mathbf z^j}\|_{l^2}^2- \|\vp_{\om}\|_{l^2}^2}{\de_j}=\lim_j \left(2\dpr{\mathbf z^j}{\vp_\om}+\de_j \|{\mathbf z^j}\|_{l^2}^2\right)=\\
  &=& -2 \dpr{\cl_+^{-1} \vp_\om}{\vp_\om}. 
  \end{eqnarray*}
  By the Vakhitov-Kolokolov criteria, since $n(\cl)=n(\cl_-)+n(\cl_+)=1$, stability of $e^{i \om t} \vp_\om$ is equivalent to $\dpr{\cl_+^{-1} \vp_\om}{\vp_\om}<0$. Hence, we have proved that the wave is stable if and only if the following limit (which exists) 
  $$
  \lim_j \f{\|\vp_{\om+\de_j}\|_{l^2}^2- \|\vp_{\om}\|_{l^2}^2}{\de_j}>0.
  $$
  \subsection{Proof of Proposition \ref{prop:908}}
  The fact that $j(\om)$ is concave follows from the definition. Thus, it is twice differentiable a.e. (and we conjecture that it is smooth everywhere). Taking the profile equation for $\ubb^\om$, and differentiating in $\om$, we conclude 
  $$
  \cl_+(\p_\om \ubb_\om)=-\ubb_\om +j'(\om) \vp^{2\si+1}= -\ubb_\om-\f{j'(\om)}{\si j(\om)} \cl_+\ubb_\om.
  $$
  This implies 
  $$
  \cl_+(\p_\om \ubb_\om+\f{j'(\om)}{2\si j(\om)} \ubb_\om)=-\ubb_\om
  $$
  Taking dot product of the last equation with $\ubb^{2\si+1}$ yields 
  $$
  \|\ubb_\om\|_{l^2}^2=j'(\om).
  $$
  It follows that $\|\vp_\om\|_{l^2}^2= j'(\om) j^{\f{1}{\si}}(\om)$.

  \section{Conclusions and Future Work}

  In the present work we have revisited the original formulation
  of~\cite{Wein} regarding the identification of suitable minimizers
  in the form of normalized waves that are associated with standing
  waves of the discrete nonlinear Schr{\"o}dinger equation. We have
  provided an alternative formulation of the relevant variational
  problem,
  and we have illustrated that the zeros of a suitably defined
  function
  can offer definitive information about the stability of the relevant
  states. We have showcases the equivalence of this distinct
  formulation's
  stability conclusions in connection with the more standard VK (or
  GSS)
  stability criteria. However, a key advantage of the formulation is
  that
  it works {\it for all} values of the system parameters. In addition
  to establishing the relevant existence theorems for such minimizers,
  and identifying the associated stability conditions, we have also
  showcased the results via select numerical computations for
  different
  system parameters (including the frequency $\omega$, or the
  nonlinearity exponent $\sigma$, as well as for different spatial
  dimensionalities of the problem).

  Even within the space of even spatial solutions, however, there are
  the classes of so-called onsite and offsite solutions~\cite{jenk1}
  and the energetic difference betwen them represents the celebrated
  Peierls-Nabarro barrier~\cite{dnlsbook}. Understanding further
  the differences between such states at the level of the variational
  formulation would be something of interest for future
  considerations. On the other hand, it is well-known within this DNLS
  model
  that various excited state solutions exist, including the so-called
  twisted localized modes~\cite{dkl,kor} belong to a different class
  of
  solutions that are anti-symmetric. Yet, these waveforms are robust
  and
  can be observed to be very long-lived~\cite{dnlsbook},  hence potentially developing
  a variational formulation within such a class of functions (or over
  more complex waveforms involving a nontrivial phase distribution)
  could be  a challenging, yet interesting direction.

  \appendix

  \section{Proof of Theorem \ref{prop:PF}} 
  First, we observe that   $\cl=-\De_{disc}+{\mathbb V}$ is a bounded self-adjoint operator on $l^2(\cz^d)$. 
  It is clear that   $\cl$ is relatively compact with respect to the constant coefficient operator $-\De_{disc}$,  whence by Weyl's theorem (see for example Theorem 14.6, p. 142, \cite{HS}), their essential spectra coincide 
   $$
   \si_{ess}(\cl)=\si_{ess}(-\Delta_{disc})=\si(-\Delta_{disc})=[0, 4d].
   $$ 
  Since by assumption $\la_0(\cl)=\inf\dpr{\cl \ubb}{\ubb}<0$, it follows that $\la_0(\cl)$ is an eigenvalue. It remains to show that it is a simple eigenvalue and the corresponding eigenfunction consists of positive entries only. 
  
     According to Theorem XIII.44, p. 204, \cite{RS}, it is enough to show that the the semigroup $e^{t \cl}$,  is positivity improving. That is, for every $\uff\in l^2(\cz^d): \uff_n\geq 0, \uff\neq 0$, we need to show   that $(e^{t \cl}  \uff)_n>0, n\in \cz^d$. Clearly, $\ubb=e^{t \cl} \uff$ solves the following discrete parabolic initial value  problem 
     \begin{equation}
     	\label{par:10}
     	\p_t u_n(t)-\De_{disc} u_n(t)= V_n u_n(t), \ \ \ubb(0)=\uff, \ \    n\in \cz^d.
     \end{equation}
  By the Duhamel's formula 
  \begin{equation}
  	\label{par:12}
  \ubb(t)=e^{t \De_{disc}} \uff + \int_0^t e^{(t-s) \De_{disc}} [{\mathbb V} \ubb(s)] ds.
  \end{equation}
  Recall,  that since $e^{t \De_{disc}}$ is given by a convolution with a positive kernel\footnote{see formula \eqref{par:7} for the $\De_{disc}$ for $d=1$, the general case follows from $e^{t \De_{d}}=e^{t \De_{1}}.\ldots e^{t \De_{1}}$}  so we have the point-wise  estimate 
  \begin{equation}
  	\label{par:15} 
  	|e^{t \De_{disc}} {\mathbf h}(n)|\leq (e^{t \De_{disc}} |{\mathbf h}|)(n), n\in \cz^d,
  \end{equation}
  where we have used the notation $|{\mathbf h}|(n)=|h(n)|$. 
  
  We can now develop a Born series type expansion for $\ubb$ in \eqref{par:12}. Namely, we have 
  \begin{eqnarray*}
  	\ubb(t) &=& e^{t \De_{disc}} \uff + \int_0^t e^{(t-s) \De_{disc}} [{\mathbb V} \ubb(s)] ds = \\
  	&=& e^{t \De_{disc}} \uff + \int_0^t e^{(t-s) \De_{disc}} [{\mathbb V} e^{s \De_{disc}} \uff  ] ds+ 
  	\int_0^t e^{(t-s_1) \De_{disc}} [{\mathbb V}\int_0^{s_1} e^{(s_1-s_2) \De_{disc}} [{\mathbb V} \ubb(s_2)] ds_2 ] ds_1. 
  \end{eqnarray*}
  Iterating this further, we obtain (a formal for now)   expansion in the form 
  \begin{equation}
  	\label{par:22} 
  \ubb(t)=\sum_{j=0}^\infty I_j(\uff),
\end{equation} 
  where $I_0(\uff)=e^{t \De_{disc}} \uff, I_1(\uff)=\int_0^t e^{(t-s_1) \De_{disc}} [{\mathbb V} e^{s_1 \De_{disc}} \uff  ] ds_1$, and for $j\geq 2$, 
  $$
  I_j(\uff)=\int_0^t e^{(t-s_1) \De_{disc}} [{\mathbb V}\int_0^{s_1}   \ldots e^{(s_{j-1}-s_{j})\De_{disc}} [{\mathbb V} e^{s_j\De_{disc}} \uff] d s_j. 
  $$
  For each fixed $n\in\cz^d$, and by taking absolute values in the previous formula, and applying \eqref{par:15}  judiciously,  we obtain 
 \begin{eqnarray*}
 	 	|I_j(\uff)(n)| &\leq &  \|{\mathbb V}\|_{l^\infty(\cz^d)}^j \int_0^t \int_0^{s_1} \ldots \int_0^{s_{j-1}} e^{(t-s_1)\De_{disc}} e^{(s_1-s_2)\De_{disc}} \ldots 
 	e^{(s_{j-1}-s_{j})\De_{disc}}  e^{s_j\De_{disc}} |\uff| ds_1 \ldots ds_j\\
 	&= & \f{(\|{\mathbb V}\|_{l^\infty(\cz^d)} t)^j}{j!}  e^{t\De_{disc}} |\uff| (n).
 \end{eqnarray*}
 Summing up in $j$ yields the bound, for each $t>0, n\in \cz^d$  
 \begin{equation}
 	\label{par:30} 
 	|\ubb_n(t)|\leq  e^{\|{\mathbb V}\|_{l^\infty(\cz^d)} t} e^{t\De_{disc}} |\uff| (n).
 \end{equation}
 This also confirms that the expansion \eqref{par:22} is valid, at least in $l^2$ sense. 
 
 Having this {\it a priori} bound on $\ubb(t)$ allows us to finish the proof of the positivity improving property of the semigroup. Indeed,  by virtue of \eqref{par:15} 
 	$$
 	|\int_0^t e^{(t-s) \De_{disc}} [{\mathbb V} \ubb(s)] ds|\leq 	
 	|\int_0^t e^{(t-s) \De_{disc}} [{\mathbb V} e^{\|{\mathbb V}\|_{l^\infty(\cz^d)} s} e^{s\De_{disc}} |\uff| ] ds|\leq 
 t 	\|{\mathbb V}\|_{l^\infty(\cz^d)} e^{\|{\mathbb V}\|_{l^\infty(\cz^d)} t} e^{t\De_{disc}} |\uff|.
 	$$
 Suppose now that $\uff: \uff\geq 0, \uff\neq 0$. From \eqref{par:12}, we have 
 $$
 \ubb(t)=e^{t\De_{disc}} \uff+\int_0^t e^{(t-s) \De_{disc}} [{\mathbb V} \ubb(s)] ds\geq (1-t \|{\mathbb V}\|_{l^\infty(\cz^d)}	e^{\|{\mathbb V}\|_{l^\infty(\cz^d)} t})  e^{t\De_{disc}} \uff
 $$
 Recall that $e^{t \De} \uff>0$, as it is given by a convolution with a strictly positive sequence $K_j(t)$, see \eqref{par:7}.  Clearly, for all $t: t \|{\mathbb V}\|_{l^\infty(\cz^d)}	e^{t \|{\mathbb V}\|_{l^\infty(\cz^d)} }<1$, we have that $\ubb(t)>0$. That is, for all small enough $t$, in fact for $t<\f{1}{2  \|{\mathbb V}\|_{l^\infty(\cz^d)}}$, we have that $\ubb(t)>0$. Note that the required smallness of $t$ is only in terms of the potential ${\mathbb V}$, and importantly, it is independent on the initial data $\uff$. 
 
 Since one can write, for each t, 
 $$
 e^{t \cl} \uff= e^{\f{t}{N} \cl} \ldots e^{\f{t}{N} \cl} \uff
 $$
 with some large $N: \f{t}{N}<\f{1}{2  \|{\mathbb V}\|_{l^\infty(\cz^d)}}$, we conclude that $e^{t \cl} \uff>0$, which completes the proof of Theorem \ref{prop:PF}.

%  \newpage 


\begin{thebibliography}{100}

  
\bibitem{Wein} M.I. Weinstein,
  %\emph{ Excitation thresholds for nonlinear localized modes on
  %lattices.}
  {Nonlinearity}, {\bf 12},  (1999), no. 3, p. 673--691.


  \bibitem{dnlsbook} P.G. Kevrekidis, 
{\it The Discrete Nonlinear Schrödinger Equation}
%: Mathematical Analysis, Numerical Computation and Physical Perspectives, in press for Springer Tracts in Modern Physics, 
Springer-Verlag (Heidelberg, 2009).


  \bibitem{dnc} D. N.\ Christodoulides,
F.\ Lederer, and Y.\ Silberberg,
Nature \textbf{424}, 817 (2003); A. A.\ Sukhorukov,
Y. S.\ Kivshar, H. S.\ Eisenberg, and Y.\ Silberberg,
IEEE J. Quant. Elect. \textbf{39}, 31 (2003).


\bibitem{moti} F. Lederer, G. I. Stegeman, D. N. Christodoulides,
  G. Assanto, M. Segev, and Y. Silberberg, Phys. Rep. {\bf 463}, 1 (2008).


  \bibitem{ober} O. Morsch and M. Oberthaler, Rev. Mod. Phys. {\bf 78}, 179
(2006).


\bibitem{yaron} H. S. Eisenberg, Y. Silberberg, R. Morandotti,
    A. R. Boyd, and J. S. Aitchison
Phys. Rev. Lett. {\bf 81}, 3383 (1998).


  \bibitem{yaron1} H. S. Eisenberg, Y. Silberberg, R. Morandotti, and J. S. Aitchison
Phys. Rev. Lett. {\bf 85}, 1863 (2000).

\bibitem{marka} M.J. Ablowitz, Z.H. Musslimani,
  Phys. Rev. Lett. {\bf 87}, 254102 (2001).


\bibitem{yaron2} R. Morandotti, U. Peschel, J. S. Aitchison, H. S. Eisenberg, and Y. Silberberg
Phys. Rev. Lett. {\bf 83}, 2726  (1999).


\bibitem{yaron3}
  R. Morandotti, H. S. Eisenberg, Y. Silberberg, M. Sorel, and J. S. Aitchison
Phys. Rev. Lett. {\bf 86}, 3296 (2001).

  \bibitem{neshev} D. N. Neshev, T. J. Alexander,
E. A. Ostrovskaya, Yu. S. Kivshar, H. Martin, I. Makasyuk,
and Z. Chen, 
Phys. Rev. Lett. {\bf 92}, 123903 (2004).


  \bibitem{fleischer} J. W. Fleischer, G. Bartal,
O. Cohen, O. Manela, M. Segev, J. Hudock, and D. N. Christodoulides,
Phys. Rev. Lett. {\bf 92}, 123904 (2004).

 \bibitem{christo2} R. Iwanow, D. A. May-Arrioja, D. N. Christodoulides,
G. I. Stegeman, Y. Min, and W. Sohler,
Phys. Rev. Lett. {\bf 95}, 053902 (2005).

\bibitem{ptrecent} P.G. Kevrekidis, D.E. Pelinovsky, D.Y. Tyugin,
  J. Phys. A {\bf 46}, 365201 (2013).

\bibitem{pankov} A. Pankov, V. Rothos,
  Proc. Roy. Soc. {\bf 464}, 3219 (2008).

\bibitem{herr1} M. Herrmann,
  DCDS A {\bf 31}, 737 (2011).

\bibitem{herr2} M. Herrmann,
  Applicable Analysis {\bf 89} 1591 (2010).

\bibitem{jia} M. Jia,
  Electr. J. Diff. Eqs. {\bf 183} 1 (2016).

\bibitem{hehe} T.-s. He, M. Zhang, K.-h. Liang, P.-f. Guo,
  Acta Mathematicae Applicatae Sinica {\bf 35}, 374 (2019).

  \bibitem{pkf1} D.E. Pelinovsky, P.G. Kevrekidis, and
    D.J. Frantzeskakis,
    % Stability of discrete solitons in nonlinear Schrodinger lattices,
    Physica D {\bf 212}, 1 (2005).


  \bibitem{saks} D. Pelinovsky and A. Sakovich,
    Physica D {\bf 240}, 265-281 (2011)

  \bibitem{pkf2} D.E. Pelinovsky, P.G. Kevrekidis, and
    D.J. Frantzeskakis,
    % Stability of discrete solitons in nonlinear Schrodinger lattices,
    Physica D {\bf 212}, 20 (2005).

    \bibitem{pkf3} M. Lukas, D.E. Pelinovsky, P.G. Kevrekidis, 
    % Stability of discrete solitons in nonlinear Schrodinger lattices,
    Physica D {\bf 237}, 339 (2008).

  \bibitem{stef1} D.E. Pelinovsky and A. Stefanov,
    %On the spectral theory and dispersive estimates for a discrete
    %Schrodinger equation in one dimension, Journal of Mathematical
    %Physics,
    J. Math. Phys. {\bf 49}, 113501 (2008).

  \bibitem{stef2} P.G. Kevrekidis, D.E. Pelinovsky, and A. Stefanov,
    %Asymptotic stability of small solitons in the discrete nonlinear
    %Schrodinger equation in one dimension, SIAM Journal of
    %Mathematical Analysis
    SIAM J. Math. Anal. {\bf 41}, 2010
    % -2030
    (2009).

  \bibitem{jenk1} M. Jenkinson, M.I. Weinstein,
    Nonlinearity {\bf 29}, 27 (2016).

  \bibitem{jenk2} M. Jenkinson, M.I. Weinstein,
    Comm. Math. Phys. {\bf 351}, 45 (2017).

    \bibitem{penati} T. Penati, M. Sansottera, S. Paleari,
      V. Koukouloyannis,
      P.G. Kevrekidis,
      Phys. D {\bf 370}, 1 (2018).

      \bibitem{vakhitov} N. G. Vakhitov and A. A. Kolokolov,
        Radiophys.
        Quantum Electron. {\bf 16}, 783 (1973).

        
\bibitem{grillakis} M. Grillakis, J. Shatah, W. Strauss,
  J. Funct. Anal. {\bf 74}, 160 (1987);
  J. Funct. Anal. {\bf 94}, 308 (1990).

\bibitem{kapprom} T. Kapitula, K. Promislow,
  {\it Spectral and dynamical stability of nonlinear waves},
  Springer-Verlag (New York, 2013).


\bibitem{malomed} B. Malomed, M.I. Weinstein,
  Phys. Lett. A {\bf 220}, 91 (1996).

\bibitem{mal2} J. Cuevas, P.G. Kevrekidis, D.J. Frantzeskakis,
  B.A. Malomed,
  Physica D {\bf 238} 67 (2009).

\bibitem{RS} M. Reed, B. Simon,
  {\it Methods of modern mathematical physics. IV. Analysis of operators}, 
Academic Press (New York-London, 1978).  


\bibitem{KKS1} T. M. Kapitula, P. G. Kevrekidis, B. Sandstede,
  %\emph{Counting eigenvalues
%	via Krein signature in infinite-dimensional Hamitonial
%	systems,}  {\em Physica D}, {\bf 3-4}, (2004), p. 263--282.
  Physica D {\bf 195}, 263 (2004).

\bibitem{KKS2} T. Kapitula,P. G. Kevrekidis, B. Sandstede,
  %\emph{ Addendum: "Counting eigenvalues via the Krein signature in
%	infinite-dimensional Hamiltonian systems'' [Phys. D 195 (2004), no. 3-4,
%	263--282] }
{Physica D}  {\bf  201}, 199 (2005).


\bibitem{Kap} T. Kapitula,  K. Promislow,
  {\it Spectral and dynamical stability of nonlinear waves},
  {Applied Mathematical Sciences}, {\bf  185},  Springer (New York 2013).

\bibitem{Pel} D. Pelinovsky,
  %\emph{Inertia law for spectral stability of solitary waves in
  %coupled nonlinear Schr\"odinger equations.}
  {Proc. R. Soc. Lond. Ser. A Math. Phys. Eng. Sci.} {\bf  461}
  %(2005) , no. 2055, p. 783--812.
783 (2005).

  
\bibitem{dkl} S. Darmanyan, A. Kobyakov, and F. Lederer,
  % Zh. Éksp. Teor. Fiz. 1253 (1998) [
  Sov. Phys. JETP 86, 682 (1998).

  \bibitem{kor} P. G. Kevrekidis, A. R. Bishop, and K. Ø. Rasmussen
Phys. Rev. E {\bf 63}, 036603 (2001).
%\begin{thebibliography}{99}

\bibitem{HS} P. Hislop, I.M. Sigal,
  {\it Introduction to spectral theory,
    With applications to Schr\"odinger operators},
  Applied Mathematical Sciences {\bf 113}, Springer-Verlag (New York, 1996). 


\end{thebibliography}
\end{document}